\newcommand*{\mailto}[1]{\href{mailto:#1}{\nolinkurl{#1}}}
\newcommand{\arxiv}[1]{\href{http://arxiv.org/abs/#1}{arXiv:#1}}
\newtheorem{theorem}{Theorem}[section]
\newtheorem{lemma}[theorem]{Lemma}
\newtheorem{proposition}[theorem]{Proposition}
\newtheorem{corollary}[theorem]{Corollary}
\newtheorem{hypothesis}[theorem]{Hypothesis}
\newtheorem{remark}[theorem]{Remark}
\newcommand{\R}{{\mathbb R}}
\newcommand{\N}{{\mathbb N}}
\newcommand{\C}{{\mathbb C}}
\newcommand{\dbspr}[2]{[ #1 , #2 ]}
\newcommand{\spr}[2]{\langle #1 , #2 \rangle}
\DeclareMathOperator{\linspan}{span}
\newcommand{\E}{\mathrm{e}}
\newcommand{\I}{\mathrm{i}}
\newcommand{\im}{\mathrm{Im}}
\newcommand{\re}{\mathrm{Re}}
\newcommand{\dom}[1]{\mathrm{dom}\left(#1\right)}
\newcommand{\mul}[1]{\mathrm{mul}\left(#1\right)}
\DeclareMathOperator{\ran}{ran}
\newcommand{\Tpre}{T_0}
\newcommand{\Tmin}{T_{\mathrm{min}}}
\newcommand{\Tmax}{T_{\mathrm{max}}}
\newcommand{\Tloc}{T_{\mathrm{loc}}}
\newcommand{\Llocr}{L^1_{\mathrm{loc}}((a,b);\varrho)}
\newcommand{\AClocp}{AC_{\mathrm{loc}}((a,b);\varsigma)}
\newcommand{\AClocr}{AC_{\mathrm{loc}}((a,b);\varrho)}
\newcommand{\Hab}{H^1(a,b)}
\newcommand{\Lrtmu}{L^2(\R^\times;\mu)}
\newcommand{\Lrmu}{L^2(\R;\mu)}
\newcommand{\qd}{{[1]}}
\newcommand{\Deftau}{\D_\tau}
\newcommand{\D}{\mathfrak{D}}
\newcommand{\M}{\mathrm{M}}
\DeclareMathOperator{\supp}{supp}
\newcommand{\oo}{o}
\newcommand{\eps}{\varepsilon}
\numberwithin{equation}{section}
\begin{document}

\title[Spectral theory of singular left-definite operators]{Direct and inverse spectral theory of singular left-definite Sturm--Liouville operators}

\author[J.\ Eckhardt]{Jonathan Eckhardt}
\address{Faculty of Mathematics\\ University of Vienna\\
Nordbergstrasse 15\\ 1090 Wien\\ Austria}
\email{\mailto{jonathan.eckhardt@univie.ac.at}}
\urladdr{\url{http://homepage.univie.ac.at/jonathan.eckhardt/}}

\thanks{J.\ Differential Equations {\bf 253} (2012), no.\ 2, 604--634}
\thanks{{\it Research supported by the Austrian Science Fund (FWF) under Grant No.\ Y330}}

\keywords{Sturm--Liouville theory, left-definite problems, spectral theory}
\subjclass[2010]{Primary 34L05, 34B20; Secondary 46E22, 34B40}

\begin{abstract}
We discuss direct and inverse spectral theory of self-adjoint Sturm--Liouville relations with separated boundary conditions in the left-definite setting. In particular, we develop singular Weyl--Titchmarsh theory for these relations. 
 Consequently, we apply de Branges' subspace ordering theorem to obtain inverse uniqueness results for the associated spectral measure.
 The results can be applied to solve the inverse spectral problem associated with the Camassa--Holm equation.
\end{abstract}

\maketitle

\section{Introduction}

 Consider the left-definite Sturm--Liouville problem
 \begin{align}\label{eqntauLD}
  -\frac{d}{dx} \left(p(x) \frac{d}{dx}y(x)\right) +q(x) y(x) = zr(x) y(x)
\end{align}
 on some interval $(a,b)$. Here, by left-definite we mean that the real-valued function $r$ is allowed to change sign but $p$ and $q$ are assumed to be non-negative.
In the case of a regular left endpoint, Bennewitz \cite{ben3}, Brown and Weikard \cite{bebrwe} recently developed   Weyl--Titchmarsh theory for such problems, analogously to the right-definite case. Moreover, they were also able to prove that the associated spectral measure uniquely determines the left-definite Sturm--Liouville problem up to a so-called Liouville transform.

In the present paper we give an alternative proof of this result, using de Branges' subspace ordering theorem for Hilbert spaces of entire functions.  
In fact, this approach allows us to deal with a larger class of problems. For instance, we allow the left endpoint to be quite singular and the weight function $r$ to be a real-valued Borel measure. 
However, at a second glance our approach is not too different from the approach taken in \cite{ben3} and~\cite{bebrwe}. The authors there prove Paley--Wiener type results to describe the spectral transforms of functions with compact support in order to obtain an appropriate Liouville transform. We will show that these spaces of transforms are actually hyperplanes in some de Branges spaces associated with our left-definite Sturm--Liouville problem. This will allow us to apply de Branges' subspace ordering theorem to obtain a suitable Liouville transform. 

As in~\cite{ben3} and~\cite{bebrwe}, the main motivation for this work is the Camassa--Holm equation, an integrable, non-linear wave equation which models unidirectional propagation of waves on shallow water.
 Due to its many remarkable properties, this equation has gotten a lot of attention recently and we only refer to e.g.\ \cite{beco}, \cite{const}, \cite{coes}, \cite{como}, \cite{hora} for further information.
 Associated with the Camassa--Holm equation is the left-definite Sturm--Liouville problem 
\begin{align}\label{eqnIsospec}
 - y''(x) + \frac{1}{4} y(x) = z \omega(x) y(x) 
\end{align}
 on the real line.  
 Direct, and in particular inverse spectral theory of this weighted Sturm--Liouville problem are of peculiar interest for solving the Camassa--Holm equation. 
 Provided $\omega$ is strictly positive and smooth enough, it is possible to transform this problem into a Sturm--Liouville problem in potential form and some inverse spectral theory may be drawn from this. However, in order to incorporate the main interesting phenomena (wave breaking \cite{coes} and multi-peakon solutions \cite{bss}, \cite{cost}) of the dispersionless Camassa--Holm equation, it is necessary to allow $\omega$  at least to be an arbitrary finite signed Borel measure on $\R$.
  In~\cite{ben3}, \cite{bebrwe} the authors were able to prove an inverse uniqueness result under some restrictions on the measure $\omega$, which for example prohibits the case of multi-peakon solutions of the Camassa--Holm equation. 
 Using the results of the present paper we are able to avoid these restrictions and to cover the case of arbitrary real finite measures $\omega$; see~\cite{IsospecCH}.
 
 Note that this application also requires us to consider our Sturm--Liouville problem~\eqref{eqntauLD} with measure coefficients. For further information on measure Sturm--Liouville equations see e.g.\ \cite{ben2} or \cite{measureSL} and the references therein. 
  Moreover, the fact that we allow the weight measure to vanish on arbitrary sets, makes it necessary to work with linear relations instead of operators. Regarding the notion of linear relations, we refer to e.g.\ \cite{arens}, \cite{cross}, \cite{dijksnoo}, \cite{dijksnoo2}, \cite{haase} or  for a brief review, containing all facts which are needed here \cite[Appendix~B]{measureSL}.
 
The paper is organized as follows. After some preliminaries about left-definite measure Sturm--Liouville equations, we give a review of linear relations associated with~\eqref{eqntauLD} in a modified Sobolev space. In particular, we discuss self-adjoint realizations with separated boundary conditions in Section~\ref{secSA}.
 Since a lot of this first part are minor generalizations of results in e.g.\ \cite{benbro}, \cite{bebrwe}, \cite{measureSL}, we will omit most of the proofs. 
 In the consecutive two sections we develop Weyl--Titchmarsh theory for such self-adjoint relations. 
 This part can essentially be done along the lines of the singular Weyl--Titchmarsh theory, recently introduced in \cite{geszin} and \cite{kst2} for Schr\"{o}dinger operators. 
 Section~\ref{secLDdB} introduces some de Branges spaces associated with a left-definite self-adjoint Sturm--Liouville problem. Moreover, we provide crucial properties of these spaces, which are needed for the proof of our inverse uniqueness result, which is carried out in the last section. In particular, this last section provides an inverse uniqueness result, which applies to the isospectral problem of the Camassa--Holm equation.
 Finally, in the appendix we give a brief review of de Branges' theory of Hilbert spaces of entire functions as far as it is needed. 
 
 Before we start, let us recall some facts about functions which are absolutely continuous with respect to some measure.
 Therefore let $(a,b)$ be an arbitrary interval and $\mu$ be a complex-valued Borel measure on $(a,b)$.
With $AC_{\mathrm{loc}}((a,b);\mu)$ we denote the set of all left-continuous functions, which are locally absolutely continuous with respect to the measure $\mu$. 
These are precisely the functions $f$ which can be written in the form
\begin{align*}
 f(x) = f(c) + \int_c^x h(s) d\mu(s), \quad x\in(a,b)
\end{align*}
for some $h\in L^1_{\mathrm{loc}}((a,b); \mu)$, where the integral has to be read as
\begin{align*}
 \int_c^x h(s) d\mu(s) = \begin{cases}
                           \int_{[c,x)} h(s)d\mu(s),             & \text{if }x>c, \\
                           0, & \text{if }x=c, \\
                           - \int_{[x,c)} h(s)d\mu(s),             & \text{if }x<c.
                         \end{cases}
\end{align*}
The function $h$ is the Radon--Nikod\'{y}m derivative of $f$ with respect to $\mu$. It is uniquely defined in $L^1_{\mathrm{loc}}((a,b); \mu)$ and we write $d f/d\mu = h$. Every function $f$ which is locally absolutely continuous with respect to $\mu$ is locally of bounded variation and hence also its right-hand limits 
\begin{align*}
 f(x+) = \lim_{\eps\downarrow 0} f(x+\eps), \quad x\in(a,b)
 \end{align*}
 exist everywhere and may differ from $f(x)$ only if  $\mu$ has mass in $x$. 
 Furthermore, we will repeatedly use the following integration by parts formula for complex-valued Borel measures $\mu$, $\nu$ on $(a,b)$  (see e.g.\ \cite[Theorem~21.67]{hesr})
\begin{align}\label{eqnLDpartint}
\int_{\alpha}^\beta F(x)d\nu(x) = \left. FG\right|_\alpha^\beta - \int_{\alpha}^\beta  G(x+)d\mu(x), \quad \alpha,\,\beta\in(a,b),
\end{align}
where $F$, $G$ are left-continuous distribution functions of $\mu$, $\nu$ respectively.

\section{Left-definite Sturm--Liouville equations}

Let $(a,b)$ be an arbitrary interval and $\varrho$, $\varsigma$ and $\chi$ be complex-valued Borel measures 
 on $(a,b)$. We are going to define a linear differential expression $\tau$ which is informally given by
\begin{align*}
\tau f =  \frac{d}{d\varrho} \left(-\frac{df}{d\varsigma} + \int fd\chi \right).
\end{align*}
In the rest of this paper we will always assume that our measures satisfy the following four properties.

\begin{hypothesis} \quad
\begin{enumerate}\label{genhypLD}
 \item The measure $\varrho$ is real-valued.
 \item The measure $\varsigma$ is positive and supported on the whole interval.
 \item The measure $\chi$ is positive but not identically zero.
 \item\label{hypgenLDsep} The measure $\varsigma$ has no point masses in common with $\chi$ or $\varrho$, i.e.
  \begin{align*}
    \varsigma(\lbrace x\rbrace) \chi(\lbrace x\rbrace) = \varsigma(\lbrace x\rbrace) \varrho(\lbrace x\rbrace) = 0, \quad x\in(a,b).
  \end{align*}
\end{enumerate}
\end{hypothesis}

The maximal domain $\Deftau$ of functions such that the expression $\tau f$ makes sense consists of all functions 
$f\in \AClocp$ for which the function
\begin{align}\label{eqnLDtauqdptheta}
 -\frac{df}{d\varsigma}(x) + \int_c^x fd\chi, \quad x\in(a,b)
\end{align}
is locally absolutely continuous with respect to $\varrho$, i.e.\ there is some representative of this function 
lying in $\AClocr$. As a consequence of the assumption on the support of $\varsigma$, this representative is unique. 
We then set $\tau f\in\Llocr$ to be the Radon--Nikod\'{y}m derivative of this function with respect to $\varrho$.
One easily sees that this definition is independent of $c\in(a,b)$ since the corresponding 
functions~\eqref{eqnLDtauqdptheta} as well as their unique representatives only differ by an additive constant.
As usual, the Radon--Nikod\'{y}m derivative with respect to $\varsigma$ of some $f\in\Deftau$ is denoted with $f^\qd$ and referred to as the quasi-derivative of $f$.

Note that this definition includes classical Sturm--Liouville and Jacobi expressions as special cases. 
 The following existence and uniqueness theorem for solutions of measure Sturm--Liouville equations may be found in~\cite[Theorem~3.1]{measureSL}.

\begin{theorem}\label{thmLDexisuniq}
For each $g\in\Llocr$, $c\in(a,b)$, $d_1$, $d_2\in\C$ and $z\in\C$ there is a unique solution of the initial value problem 
\begin{align*}
 (\tau - z)f=g \quad\text{with}\quad f(c)=d_1 \quad\text{and}\quad f^\qd(c) = d_2.
\end{align*}
\end{theorem}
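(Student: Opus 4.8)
The plan is to convert the initial value problem into an equivalent system of linear Volterra integral equations and then solve that system by successive approximation. Integrating the relation $(\tau-z)f = g$ once against $\varrho$, and using that the left-continuous function $-f^\qd(x) + \int_c^x f\,d\chi$ has Radon--Nikod\'ym derivative $\tau f = g + zf$ with respect to $\varrho$, one sees that $f\in\Deftau$ solves the problem if and only if the pair $(f,f^\qd)$ satisfies
\begin{align*}
 f(x) &= d_1 + \int_c^x f^\qd\,d\varsigma, \\
 f^\qd(x) &= d_2 + \int_c^x f\,d(\chi - z\varrho) - \int_c^x g\,d\varrho,
\end{align*}
for all $x\in(a,b)$. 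Writing $Y = (f,f^\qd)^\top$ and collecting the initial data and the $g$-integral into an explicit inhomogeneous term $Y_0$, this is the single vector equation $Y(x) = Y_0(x) + \int_c^x dA\,Y$, where $dA$ is the matrix-valued measure with vanishing diagonal and off-diagonal entries $d\varsigma$ and $d(\chi-z\varrho)$. Conversely, any solution $Y$ of this equation automatically has left-continuous, locally bounded-variation components, so its first component lies in $\AClocp$ and the pair lies in $\Deftau$; hence the reformulation is genuinely equivalent to the original problem.

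First I would fix a compact subinterval $[c,d]\subseteq(a,b)$ and define the Picard iterates $Y_{n+1}(x) = Y_0(x) + \int_c^x dA\,Y_n$. Using the left-continuous representatives together with the half-open-interval convention in the integrals, the successive differences obey a Volterra bound of the form $|Y_{n+1}(x) - Y_n(x)| \le C\, V(x)^n/n!$, where $V(x)$ is the total variation of $A$ on $[c,x)$; the factorial gain is the standard effect of iterating a Volterra operator. The series $\sum_n (Y_{n+1}-Y_n)$ therefore converges uniformly on $[c,d]$ to a solution of the integral equation, and exhausting $(a,b)$ by such subintervals on both sides of $c$ produces a global solution. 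Uniqueness follows from the same estimate applied to the difference of two solutions: a solution of the homogeneous equation with vanishing initial data satisfies the bound for every $n$ and is thus identically zero.

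The step that requires genuine care is the measure-theoretic bookkeeping at the point masses of $\varsigma$, $\chi$ and $\varrho$, and this is exactly where Hypothesis~\ref{genhypLD}~\eqref{hypgenLDsep} enters. Because $\varsigma$ has no point mass in common with $\chi$ or $\varrho$, the product of the two off-diagonal entries of the jump $\Delta A(x_0)$ vanishes at every point $x_0$, so $\Delta A(x_0)^2 = 0$; the transition matrix $I + \Delta A(x_0)$ across each atom is therefore unipotent with inverse $I - \Delta A(x_0)$, in particular invertible. This guarantees both that the integral equation is well posed across atoms --- at each point at most one of $f$ and $f^\qd$ jumps --- and that the iterated-integral estimate behaves exactly as in the purely absolutely continuous case. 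Verifying the factorial bound on the iterates in the presence of atoms, with the precise left-continuous and half-open conventions, is the only delicate computation; the remaining steps are routine.
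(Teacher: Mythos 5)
Your proof is correct and takes essentially the same route as the source of this statement: the paper does not prove Theorem~\ref{thmLDexisuniq} itself but quotes it from \cite[Theorem~3.1]{measureSL}, where the argument is precisely the one you propose --- rewrite the initial value problem as a first-order $2\times 2$ Volterra--Stieltjes system and solve it by successive approximation. You have also isolated the correct key point: property~\eqref{hypgenLDsep} in Hypothesis~\ref{genhypLD} forces every jump matrix $\Delta A(x_0)$ to be nilpotent, which simultaneously makes the transition matrices $I+\Delta A(x_0)$ invertible (as needed to continue the solution to the left of $c$) and annihilates the diagonal contributions in the iterated integrals, so that the factorial bound, which in general fails for the backward iteration at atoms, does survive here.
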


We say that $\tau$ is regular at an endpoint if the measures $\varrho$, $\varsigma$ and $\chi$ are finite near this endpoint.
 In this case, if $g$ is also integrable near this endpoint, then each solution of the equation $(\tau-z)f=g$ may be continuously extended to this endpoint. Moreover, the initial point $c$ in Theorem~\ref{thmLDexisuniq} may be chosen as this endpoint (see e.g.\ \cite[Theorem~3.5]{measureSL}).

Associated with our differential expression $\tau$ is a linear relation $\Tloc$ in the space $\AClocp$ defined by
\begin{align*}
 \Tloc = \lbrace (f,f_\tau)\in \AClocp^2 \,|\, f\in\Deftau,\, \tau f=f_\tau \text{ in }\Llocr \rbrace.
\end{align*}
Regarding notation we will make the following convention.
Given some pair $f\in\Tloc$ we will denote its first component also with $f$ and the second component with $f_\tau$.
Moreover, if $g\in\AClocp$ and $f$ is a solution of $(\tau-z)f=g$ for some $z\in\C$, then this solution $f$ will often be identified with the pair $(f,g+zf)\in\Tloc$.

In the right-definite theory, a crucial role is played by the Wronskian of two functions and the associated Lagrange identity.
The corresponding quantity in the left-definite case is the function
\begin{align}\label{eqnModWron}
 V(f,g^\ast)(x) = f_\tau(x) g^\qd(x)^\ast - f^\qd(x) g_\tau(x)^\ast, \quad x\in(a,b),
\end{align}
defined for all pairs $f$, $g\in\Tloc$. 
Using the integration by parts formula~\eqref{eqnLDpartint} and property~\eqref{hypgenLDsep} in Hypothesis~\ref{genhypLD} one obtains the following Lagrange identity for this modified Wronskian.

\begin{proposition}\label{propLDLagrange}
For every $f$, $g\in\Tloc$ and $\alpha$, $\beta\in(a,b)$ we have
\begin{align*}
 V(f,g^\ast)(\beta) - V(f,g^\ast)(\alpha) & = \int_\alpha^\beta f_\tau(x) g(x)^\ast - f(x) g_\tau(x)^\ast d\chi(x) \\
                                          & \quad\quad\quad + \int_\alpha^\beta f_\tau^\qd(x) g^\qd(x)^\ast - f^\qd(x) g_\tau^\qd(x)^\ast d\varsigma(x).
\end{align*}
\end{proposition}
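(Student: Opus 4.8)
The plan is to reduce everything to the integration by parts formula~\eqref{eqnLDpartint} applied to the two products that make up the modified Wronskian, and then to invoke property~\eqref{hypgenLDsep} of Hypothesis~\ref{genhypLD} in order to dispose of the right-hand limits that this formula inevitably produces. First I would record the differentials of the four left-continuous functions entering $V(f,g^\ast)$. By the definition of $\Tloc$, both components of $f$ and $g$ lie in $\AClocp$, so $f_\tau$ and $g_\tau$ are locally absolutely continuous with respect to $\varsigma$ with $df_\tau = f_\tau^\qd\,d\varsigma$ and $dg_\tau = g_\tau^\qd\,d\varsigma$. For the quasi-derivatives I would use that, since $f,g\in\Deftau$, the functions $x\mapsto -f^\qd(x)+\int_c^x f\,d\chi$ and $x\mapsto -g^\qd(x)+\int_c^x g\,d\chi$ possess representatives in $\AClocr$ whose Radon--Nikod\'ym derivatives with respect to $\varrho$ are $f_\tau$ and $g_\tau$; reading these as identities between distribution functions yields $d(f^\qd)=f\,d\chi - f_\tau\,d\varrho$ and $d(g^\qd)=g\,d\chi - g_\tau\,d\varrho$.

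Next I would apply~\eqref{eqnLDpartint} to each of the products $f_\tau (g^\qd)^\ast$ and $f^\qd (g_\tau)^\ast$ on $[\alpha,\beta]$, integrating $f_\tau$ against $d((g^\qd)^\ast)$ and $f^\qd$ against $d((g_\tau)^\ast)$, so that the right-hand limits fall on the conjugated $g$-factors. Substituting the four differentials just found and using that $\varrho$, $\varsigma$, $\chi$ are real, this rewrites $V(f,g^\ast)(\beta)-V(f,g^\ast)(\alpha)$ as a sum of integrals against $d\varrho$, $d\chi$ and $d\varsigma$, in which the $\varrho$- and $\chi$-integrals still carry the right-hand limit $g_\tau(x+)$ and the $\varsigma$-integral carries $g^\qd(x+)$.

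The crux, and the only place where Hypothesis~\ref{genhypLD} is genuinely needed, is the removal of these right-hand limits. Since $g_\tau\in\AClocp$, the jump $g_\tau(x+)-g_\tau(x)$ is supported on the point masses of $\varsigma$, which by~\eqref{hypgenLDsep} are disjoint from those of $\varrho$ and of $\chi$; hence $g_\tau(x+)=g_\tau(x)$ both $\varrho$-almost everywhere and $\chi$-almost everywhere. Dually, from $d(g^\qd)=g\,d\chi - g_\tau\,d\varrho$ the jumps of $g^\qd$ sit on the point masses of $\varrho$ and $\chi$, again disjoint from those of $\varsigma$, so $g^\qd(x+)=g^\qd(x)$ $\varsigma$-almost everywhere. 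After these replacements the two $d\varrho$-contributions combine into $\int_\alpha^\beta f_\tau\bigl((g_\tau(x+))^\ast-(g_\tau(x))^\ast\bigr)\,d\varrho$, which vanishes by~\eqref{hypgenLDsep}, while the $d\chi$- and $d\varsigma$-integrals assemble precisely into the two asserted terms.

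I expect no analytic difficulty here: no estimate is required, only the separation property~\eqref{hypgenLDsep}. The main obstacle is therefore purely bookkeeping, namely keeping track of the signs across the subtraction of the two integrated products and verifying that the jump sets of $f^\qd,g^\qd$ and of $f_\tau,g_\tau$ are controlled exactly as claimed by the differentials computed in the first step.
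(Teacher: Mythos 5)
Your proof is correct and follows exactly the route the paper indicates for Proposition~\ref{propLDLagrange}: apply the integration by parts formula~\eqref{eqnLDpartint} to the two products in $V(f,g^\ast)$, using the differentials $df_\tau=f_\tau^\qd\,d\varsigma$ and $d(f^\qd)=f\,d\chi-f_\tau\,d\varrho$, and then remove the right-hand limits via the separation property~\eqref{hypgenLDsep}, which also makes the two $d\varrho$-contributions cancel. Nothing is missing; the bookkeeping of jumps and signs is exactly as you describe.
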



As a consequence of this Lagrange identity one sees that for each $z\in\C$ the modified Wronskian $V(u_1,u_2)$ of two solutions $u_1$, $u_2$ of $(\tau-z)u=0$ is constant. Furthermore, for $z\not=0$ the solutions $u_1$, $u_2$ are linearly dependent if and only if $V(u_1,u_2)=0$.
 Another useful identity for the modified Wronskian is the following Pl\"{u}cker identity, which follows similarly as in \cite[Proposition~3.4]{measureSL}.

\begin{proposition}\label{propLDplucker}
 For every $f_1$, $f_2$, $f_3$, $f_4\in\Tloc$ we have
 \begin{align*}
  0 & = V(f_1,f_2)V(f_3,f_4) + V(f_1,f_3)V(f_4,f_2) + V(f_1,f_4)V(f_2,f_3).
 \end{align*}
\end{proposition}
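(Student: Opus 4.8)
The identity is purely algebraic and holds pointwise, so the plan is to fix $x\in(a,b)$ and dispense with all analytic content at once. Reading the second slot without conjugation (consistently with the way the claim is stated), definition~\eqref{eqnModWron} exhibits the modified Wronskian as a $2\times 2$ determinant,
\[
 V(f_i,f_j)(x) = \det\begin{pmatrix} f_{i,\tau}(x) & f_{j,\tau}(x) \\ f_i^\qd(x) & f_j^\qd(x) \end{pmatrix},
\]
so that the six quantities $V(f_i,f_j)(x)$ occurring in the claim are precisely the $2\times 2$ minors of the $2\times 4$ matrix $M(x)$ whose two rows are $(f_{1,\tau}(x),\dots,f_{4,\tau}(x))$ and $(f_1^\qd(x),\dots,f_4^\qd(x))$. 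This is exactly why the argument runs parallel to \cite[Proposition~3.4]{measureSL}: once the reduction is made, no property of $\tau$ or of the measures is needed beyond the definition of $V$.

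The cleanest route I would take is to form the $4\times 4$ matrix obtained from $M(x)$ by repeating its two rows, so that rows three and four duplicate rows one and two. This matrix is singular, hence its determinant vanishes. Expanding that determinant by the Laplace rule along the first two rows produces a sum over the six column pairs $\{i,j\}$ of the product of the minor on columns $\{i,j\}$ (taken from the first two rows) with the complementary minor on the remaining columns (taken from the last two rows). Since the last two rows coincide with the first two, every complementary minor is again one of the $V(f_k,f_l)(x)$, and after inserting the Laplace signs $(-1)^{1+2+i+j}$ one finds that the whole expansion equals twice the right-hand side of the asserted identity. As the determinant is zero, the identity follows for this $x$, and since $x$ was arbitrary the proposition holds on all of $(a,b)$.

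The only point requiring care is the bookkeeping of signs together with the orientation of the complementary minors: one must invoke the skew-symmetry $V(f_i,f_j)=-V(f_j,f_i)$ to reorder the columns of each complementary minor, and it is precisely this step that turns the middle term into $V(f_1,f_3)V(f_4,f_2)$ rather than $V(f_1,f_3)V(f_2,f_4)$. I expect this sign accounting to be the sole genuine obstacle. Should it prove cumbersome, one can instead expand all three products directly in terms of $f_{i,\tau}(x)$ and $f_i^\qd(x)$ and verify that the twelve resulting monomials cancel in pairs; this is elementary but less transparent than the determinantal argument above.
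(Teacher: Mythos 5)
Your proof is correct and is essentially the paper's own argument: the paper defers to \cite[Proposition~3.4]{measureSL}, whose proof is exactly this trick of expanding the vanishing determinant of the $4\times 4$ matrix with repeated rows, applied here with the rows $(f_{1,\tau},\dots,f_{4,\tau})$ and $(f_1^\qd,\dots,f_4^\qd)$ playing the roles of $(f_i)$ and $(f_i^\qd)$ in the right-definite case. The sign bookkeeping you flag does work out: the Laplace expansion gives twice $V(f_1,f_2)V(f_3,f_4)-V(f_1,f_3)V(f_2,f_4)+V(f_1,f_4)V(f_2,f_3)$, and skew-symmetry turns the middle term into $V(f_1,f_3)V(f_4,f_2)$, as claimed.
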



In order to obtain a linear relation in a Hilbert space, we introduce a modified Sobolev space $\Hab$.
It consists of all functions $f$ on $(a,b)$ which are locally absolutely continuous with respect to $\varsigma$ such that $f$ is square integrable with respect to $\chi$ and the Radon--Nikod\'{y}m derivative $df/d\varsigma$ is square integrable with respect to $\varsigma$.
 The space $\Hab$ is equipped with the inner product
 \begin{align*}
  \spr{f}{g} = \int_a^b f(x) g(x)^\ast d\chi(x) + \int_a^b f^\qd(x) g^\qd(x)^\ast d\varsigma(x), \quad f,\,g\in \Hab.
 \end{align*}
 Hereby note that $f$ and $g$ are always continuous in points of mass of $\chi$ in virtue of property~\eqref{hypgenLDsep} in Hypothesis~\ref{genhypLD}. It is not surprising that this modified Sobolev space turns out to be a reproducing kernel Hilbert space (see e.g.\ \cite[Section~2]{bebrwe}).

In order to obtain the maximal relation $\Tmax$ in $\Hab$ associated with our differential expression $\tau$ we restrict $\Tloc$ by
\begin{align*}
 \Tmax = \left\lbrace (f, f_\tau)\in\Hab\times\Hab \,|\, (f,f_\tau)\in\Tloc \right\rbrace.
\end{align*}
 The following characterization of $\Tmax$ as weak solutions of our differential equation will be quite useful (the proof can be done along the lines of \cite[Proposition~2.4]{bebrwe}). 

\begin{proposition}\label{propLDweakform}
Some $(f,f_\tau)\in \Hab\times \Hab$ lies in $\Tmax$ if and only if
\begin{align}\label{eqnLDweakformint}
 \int_a^b f(x)g(x)^\ast d\chi(x) + \int_a^b f^\qd(x) g^\qd(x)^\ast d\varsigma(x) = \int_a^b f_\tau(x) g(x)^\ast d\varrho(x)
\end{align}
for each $g\in H_c^1(a,b)$.
\end{proposition}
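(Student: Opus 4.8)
The plan is to read the condition $(f,f_\tau)\in\Tmax$ directly off the definition of $\Deftau$ and then to pass between the two sides of \eqref{eqnLDweakformint} by means of the integration by parts formula~\eqref{eqnLDpartint}, the test function $g\in H^1_c(a,b)$ serving as the factor whose quasi-derivative is to be shifted onto $f$. Note that the Lagrange identity of Proposition~\ref{propLDLagrange} is not directly available here, since it presupposes that both entries lie in $\Tloc$, whereas $g$ is merely a compactly supported Sobolev function; the raw formula~\eqref{eqnLDpartint} is exactly what is needed. The structural fact driving the argument is that, given $(f,f_\tau)\in\Hab\times\Hab$, one has $f\in\Deftau$ with $\tau f=f_\tau$ precisely when the left-continuous representative of the quasi-derivative $f^\qd$ is a distribution function of the measure $f\,d\chi-f_\tau\,d\varrho$, i.e.
\[ f^\qd(x)=\mathrm{const}+\int_c^x f\,d\chi-\int_c^x f_\tau\,d\varrho,\qquad x\in(a,b); \]
local finiteness of $f\,d\chi-f_\tau\,d\varrho$ is immediate because functions in $\Hab$ are locally of bounded variation, hence locally bounded.

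For the forward implication I would fix $g\in H^1_c(a,b)$ and choose $\alpha,\beta\in(a,b)$ with $\supp g\subseteq[\alpha,\beta]$, so that $g(\alpha)=g(\beta)=0$. Applying~\eqref{eqnLDpartint} to $\int_\alpha^\beta f^\qd(g^\qd)^\ast\,d\varsigma$, with $f^\qd$ taken as the distribution function of $f\,d\chi-f_\tau\,d\varrho$ and $g^\ast$ as the distribution function of $(g^\qd)^\ast\,d\varsigma$, the boundary term $f^\qd g^\ast$ vanishes at $\alpha$ and $\beta$. Property~\eqref{hypgenLDsep} of Hypothesis~\ref{genhypLD} then permits replacing the right limits $g(x+)$ by $g(x)$ in the resulting integrals against $\chi$ and $\varrho$, since $g$ is continuous at every point mass of $\chi$ and of $\varrho$. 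Rearranging the resulting identity reproduces \eqref{eqnLDweakformint} exactly.

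For the converse I would assume \eqref{eqnLDweakformint} for all such $g$ and set $\Phi(x)=\int_c^x f\,d\chi-\int_c^x f_\tau\,d\varrho$, the left-continuous distribution function of $f\,d\chi-f_\tau\,d\varrho$. Transposing $\int f g^\ast\,d\chi$ and rewriting the right-hand side as $\int_\alpha^\beta g^\ast\,d\Phi$, I would integrate by parts once more via~\eqref{eqnLDpartint} (now with $g^\ast$ as integrand and $\Phi$ as distribution function), again discarding the boundary term and using~\eqref{hypgenLDsep} to identify $\Phi(x+)$ with $\Phi(x)$ for $\varsigma$-almost every $x$. This collapses \eqref{eqnLDweakformint} into $\int_\alpha^\beta(f^\qd-\Phi)(g^\qd)^\ast\,d\varsigma=0$ for every $g\in H^1_c(a,b)$. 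To conclude I would invoke the fundamental lemma of the calculus of variations in this measure setting: as $g$ ranges over $H^1_c(a,b)$, its quasi-derivative $g^\qd$ ranges over all compactly supported elements of $L^2((a,b);\varsigma)$ subject only to $\int g^\qd\,d\varsigma=0$ (the constraint forcing $g$ to return to zero and hence to be compactly supported). Annihilation of $f^\qd-\Phi$ against every such test function forces $f^\qd-\Phi$ to agree $\varsigma$-almost everywhere with a constant; adopting this representative shows that $x\mapsto-f^\qd(x)+\int_c^x f\,d\chi$ lies in $\AClocr$ with Radon--Nikod\'ym derivative $f_\tau$, whence $f\in\Deftau$, $\tau f=f_\tau$, and therefore $(f,f_\tau)\in\Tmax$.

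The routine calculations are short, so I expect the genuine difficulty to lie entirely in the bookkeeping of representatives. Since $f^\qd$ and $f_\tau$ are only determined $\varsigma$- respectively $\varrho$-almost everywhere while~\eqref{eqnLDpartint} requires honestly left-continuous distribution functions, the main care goes into (a) verifying that $f^\qd$ may be taken equal to the distribution function of $f\,d\chi-f_\tau\,d\varrho$, and (b) justifying the passages $g(x+)\mapsto g(x)$ and $\Phi(x+)\mapsto\Phi(x)$; both of these rest squarely on the disjointness of point masses granted by property~\eqref{hypgenLDsep}. Once the admissible class of $g^\qd$ is correctly identified, the concluding annihilator step is standard.
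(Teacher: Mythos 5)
Your proof is correct, and it is essentially the argument the paper has in mind: the paper omits the proof entirely, saying only that it ``can be done along the lines of'' Proposition~2.4 of \cite{bebrwe}, which is precisely this combination of the integration by parts formula~\eqref{eqnLDpartint} (boundary terms killed by compact support, the right limits $g(x+)$, $\Phi(x+)$ handled by the point-mass separation in Hypothesis~\ref{genhypLD}) with a du Bois--Reymond type annihilation lemma. Your write-up in fact supplies the bookkeeping the citation leaves implicit, in particular the choice of left-continuous representatives and the identification of $\lbrace g^\qd \,|\, g\in H_c^1(a,b)\rbrace$ with the compactly supported mean-zero elements of $L^2((a,b);\varsigma)$, both of which check out.
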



Here, $H_c^1(a,b)$ denotes the linear subspace of $\Hab$ consisting of all functions with compact support.
 Consequently, some function $h\in \Hab$ lies in the multi-valued part of $\Tmax$ if and only if $h=0$ almost everywhere with respect to $|\varrho|$.


We say some function $f\in\AClocp$ lies in $\Hab$ near an endpoint if $f$ is square integrable with respect to $\chi$ near this endpoint and its quasi-derivative is square integrable with respect to $\varsigma$ near this endpoint. Furthermore, we say some pair $f\in\Tloc$ lies in $\Tmax$ near an endpoint if both components $f$ and $f_\tau$ lie in $\Hab$ near this endpoint. Clearly, some $f\in\Tloc$ lies in $\Tmax$ if and only if it lies in $\Tmax$ near $a$ and near $b$. Using the Lagrange identity one shows the following properties of the modified Wronskian on $\Tmax$.

\begin{lemma}\label{lemLDlagrange}
 If $f$ and $g$ lie in $\Tmax$ near an endpoint, then the limit of $V(f,g^\ast)(x)$ as $x$ tends to this endpoint
 exists and is finite. If $f$ and $g$ even lie in $\Tmax$, then
 \begin{align}\label{eqnLDLagrangeSP}
  \spr{f_\tau}{g} - \spr{f}{g_\tau} = V(f,g^\ast)(b) - V(f,g^\ast)(a). 
 \end{align}
 Moreover, $V(\,\cdot\,,\cdot\,)(a)$ and $V(\,\cdot\,,\cdot\,)(b)$ are continuous bilinear forms on $\Tmax$ with respect to the product topology on $\Tmax$.
\end{lemma}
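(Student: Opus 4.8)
The plan is to obtain all three assertions from the Lagrange identity in Proposition~\ref{propLDLagrange}, combined with the reproducing kernel property of $\Hab$. For the existence of the one-sided limits, suppose $f$ and $g$ lie in $\Tmax$ near $b$ (the endpoint $a$ being analogous), so that $f,f_\tau,g,g_\tau$ are square integrable with respect to $\chi$ near $b$ and their quasi-derivatives are square integrable with respect to $\varsigma$ near $b$. Applying Proposition~\ref{propLDLagrange} on $[\alpha,\beta]$ with $\alpha,\beta$ close to $b$ and estimating each of the two integrals by the Cauchy--Schwarz inequality shows that $|V(f,g^\ast)(\beta)-V(f,g^\ast)(\alpha)|$ is bounded by a sum of products of $L^2(\chi)$- and $L^2(\varsigma)$-norms of $f,f_\tau,g,g_\tau$ and their quasi-derivatives over $[\alpha,\beta)$. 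These tails vanish as $\alpha,\beta\uparrow b$, so $V(f,g^\ast)(x)$ satisfies a Cauchy criterion and the limit exists and is finite.

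For~\eqref{eqnLDLagrangeSP}, let $f,g\in\Tmax$, so that by the previous step the limits at both endpoints exist. Letting $\alpha\downarrow a$ and $\beta\uparrow b$ in Proposition~\ref{propLDLagrange}---the integrals now converging over all of $(a,b)$ since $f,f_\tau,g,g_\tau\in\Hab$---gives
\begin{align*}
 V(f,g^\ast)(b)-V(f,g^\ast)(a) = \int_a^b f_\tau g^\ast - f g_\tau^\ast \, d\chi + \int_a^b f_\tau^\qd g^{\qd\ast} - f^\qd g_\tau^{\qd\ast}\, d\varsigma.
\end{align*}
Comparing the right-hand side term by term with the definition of $\spr{\cdot}{\cdot}$ on $\Hab$, it equals $\spr{f_\tau}{g}-\spr{f}{g_\tau}$, which is~\eqref{eqnLDLagrangeSP}.

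For the continuity I would fix an interior point $\alpha$ and write, again via Proposition~\ref{propLDLagrange},
\begin{align*}
 V(f,g^\ast)(b) = V(f,g^\ast)(\alpha) + \int_\alpha^b f_\tau g^\ast - f g_\tau^\ast \, d\chi + \int_\alpha^b f_\tau^\qd g^{\qd\ast} - f^\qd g_\tau^{\qd\ast}\, d\varsigma.
\end{align*}
The two integral remainders are bounded by products of $\Hab$-norms through Cauchy--Schwarz and hence are continuous bilinear forms, so everything reduces to the continuity of the fixed-point form $(f,g)\mapsto V(f,g^\ast)(\alpha)=f_\tau(\alpha)g^\qd(\alpha)^\ast-f^\qd(\alpha)g_\tau(\alpha)^\ast$. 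The evaluations $f_\tau(\alpha)$ and $g_\tau(\alpha)$ are controlled by $\|f_\tau\|$ and $\|g_\tau\|$ since $\Hab$ is a reproducing kernel Hilbert space. For the quasi-derivative evaluations I would use that, by the definition of $\Deftau$ and $\tau f=f_\tau$, one has $f^\qd(x)=f^\qd(\alpha)+\int_\alpha^x f\, d\chi-\int_\alpha^x f_\tau\, d\varrho$; integrating this against $\varsigma$ over a fixed compact $[\alpha,\beta]$ and using $\int_\alpha^\beta f^\qd\, d\varsigma=f(\beta)-f(\alpha)$ together with $\varsigma([\alpha,\beta))>0$ (the support assumption in Hypothesis~\ref{genhypLD}) lets me solve for $f^\qd(\alpha)$ in terms of $f(\alpha)$, $f(\beta)$, and two double integrals.

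I expect the term involving $\varrho$ to be the main obstacle, precisely because $\varrho$ does not appear in the norm of $\Hab$. I would handle it by first estimating $f_\tau$ pointwise on $[\alpha,\beta]$ via $f_\tau(x)=f_\tau(\alpha)+\int_\alpha^x f_\tau^\qd\, d\varsigma$ and Cauchy--Schwarz, which gives $\sup_{[\alpha,\beta]}|f_\tau|\le C\|f_\tau\|$, and then using that $\varrho$ is finite on the compact interval $[\alpha,\beta]$; the double integral against $\chi$ is controlled the same way. This yields $|f^\qd(\alpha)|\le C(\|f\|+\|f_\tau\|)$, that is, continuity with respect to the graph norm. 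Consequently $V(\,\cdot\,,\cdot\,)(\alpha)$, and therefore $V(\,\cdot\,,\cdot\,)(b)$, is a continuous bilinear form, and the continuity of $V(\,\cdot\,,\cdot\,)(a)$ follows either in the same way or directly from the identity~\eqref{eqnLDLagrangeSP}.
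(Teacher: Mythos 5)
Your proposal is correct and takes essentially the approach the paper intends: the paper omits the proof of this lemma, remarking only that it follows ``using the Lagrange identity'', i.e.\ Proposition~\ref{propLDLagrange}, which is precisely the backbone of your argument (Cauchy--Schwarz on the tails for the existence of the limits, passing to the limit $\alpha\downarrow a$, $\beta\uparrow b$ for~\eqref{eqnLDLagrangeSP}). Your treatment of the continuity statement---reducing $V(\,\cdot\,,\cdot\,)(b)$ to an interior evaluation plus graph-norm-bounded integral remainders, and then controlling $f^\qd(\alpha)$ in the graph norm by solving for it from the differential equation, using $\varsigma([\alpha,\beta))>0$ from Hypothesis~\ref{genhypLD}, the local finiteness of $\chi$ and $\varrho$, and the reproducing kernel property of $\Hab$---correctly supplies the one part that requires a genuine idea beyond the Lagrange identity itself.
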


If $\tau$ is regular at an endpoint, then it is not hard to see that for each $f$ which lies in $\Tmax$ near this endpoint, the limits of $f(x)$, $f^\qd(x)$ and $f_\tau(x)$ as $x$ tends to this endpoint 
exist and are finite. 
 Of course in this case, equation~\eqref{eqnModWron} extends to this regular endpoint provided that $f$ and $g$ lie in $\Tmax$ near this endpoint. 

Next we will collect some more properties of the modified Sobolev space $\Hab$ and the maximal relation $\Tmax$. 
The next proposition may be proved similarly to \cite[Theorem~2.6]{bebrwe} and \cite[Proposition~2.7]{bebrwe}.
 Here and in the following, $H_0^1(a,b)$ will denote the closure of $H_c^1(a,b)$ in $\Hab$.

\begin{proposition}\label{propLDdecompH1}
We have $\Hab=H_0^1(a,b)\oplus \ker(\Tmax)$, with
\begin{align}
 \dim\ker(\Tmax) = \begin{cases}
     0, & \text{if }\varsigma+\chi\text{ is infinite near both endpoints,} \\
     1, & \text{if }\varsigma+\chi\text{ is finite near precisely one endpoint,} \\
     2, & \text{if }\varsigma+\chi\text{ is finite.} 
                 \end{cases}
\end{align}
 Moreover, there are (up to scalar multiples) unique non-trivial real solutions $w_a$, $w_b$ of $\tau u=0$ which lie in $\Hab$ near $a$, $b$ respectively and satisfy
  \begin{align}\label{eqnLDwaneara}
   \lim_{\alpha\rightarrow a} g(\alpha) w_a^\qd(\alpha) = \lim_{\beta\rightarrow b} g(\beta) w_b^\qd(\beta) =0, \quad g\in\Hab.
  \end{align}
  The solutions $w_a$ and $w_b$ are linearly independent.
\end{proposition}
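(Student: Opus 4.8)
The plan is to first pin down the orthogonal decomposition, which is essentially formal, and then to reduce all the remaining assertions to a local analysis of the solutions of $\tau u=0$ near each endpoint. First I would observe that $H_0^1(a,b)$ is closed by definition, so $\Hab = H_0^1(a,b)\oplus H_0^1(a,b)^\perp$, and it suffices to identify the complement with $\ker(\Tmax)$. A function $f$ lies in $H_0^1(a,b)^\perp$ if and only if $\spr{f}{g}=0$ for all $g\in H_c^1(a,b)$; comparing with the weak formulation \eqref{eqnLDweakformint} of Proposition~\ref{propLDweakform} specialized to $f_\tau=0$, this holds precisely when $(f,0)\in\Tmax$, that is, when $f\in\ker(\Tmax)$. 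Since the elements of $\ker(\Tmax)$ are exactly the solutions of $\tau u=0$ lying in $\Hab$, this space is at most two-dimensional by Theorem~\ref{thmLDexisuniq}.

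The computational engine for everything else is a single integration-by-parts identity. For a solution $w$ of $\tau w=0$ (so that $dw^\qd = w\,d\chi$) and any $g\in\Hab$ (so that $dg = g^\qd\,d\varsigma$), the formula \eqref{eqnLDpartint} together with property~\eqref{hypgenLDsep} in Hypothesis~\ref{genhypLD} yields
\[ \spr{w}{g}_{(\alpha,\beta)} = w^\qd(\beta)\,g(\beta)^\ast - w^\qd(\alpha)\,g(\alpha)^\ast, \]
where $\spr{\cdot}{\cdot}_{(\alpha,\beta)}$ is the inner product restricted to $(\alpha,\beta)$; the disjointness of point masses is exactly what permits replacing $g^\ast(x+)$ by $g^\ast(x)$ under the $\chi$-integral. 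Taking $g=w$ real gives the energy identity $\|w\|^2_{(\alpha,\beta)} = (w^\qd w)(\beta) - (w^\qd w)(\alpha)$, and applying the identity to two solutions and subtracting shows that the ordinary Wronskian $w_1 w_2^\qd - w_1^\qd w_2$ is constant. (Note that the modified Wronskian $V$ of \eqref{eqnModWron} vanishes identically on solutions at $z=0$, so it is this ordinary Wronskian, not $V$, that detects linear dependence here.)

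Let $S_a$ denote the solutions of $\tau u=0$ lying in $\Hab$ near $a$, so that $\ker(\Tmax)=S_a\cap S_b$. If $\varsigma+\chi$ is finite near $a$, a Gronwall argument on the coupled system $u(x)=u(c)+\int_c^x u^\qd\,d\varsigma$, $u^\qd(x)=u^\qd(c)+\int_c^x u\,d\chi$ shows every solution is bounded near $a$ and extends continuously there, whence $\dim S_a=2$; likewise every $g\in\Hab$ extends continuously to $a$, so \eqref{eqnLDwaneara} reduces to $g(a)w_a^\qd(a)=0$ for all $g$, i.e.\ to $w_a^\qd(a)=0$, which fixes $w_a$ up to scalars. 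If instead $\varsigma+\chi$ is infinite near $a$, the limit-point analysis (as in \cite[Section~2]{bebrwe}) gives $\dim S_a=1$; to check \eqref{eqnLDwaneara} for its generator $w_a$, the boundary identity shows $\lim_{\alpha\to a}w_a^\qd(\alpha)g(\alpha)^\ast$ exists, and decomposing $g$ on $(a,c)$ into an $H_0^1$-part (on which this limit vanishes, by approximation with functions supported away from $a$) plus a multiple of $w_a$ reduces the claim to the single vanishing $\lim_{\alpha\to a}(w_a^\qd w_a)(\alpha)=0$, which in turn follows from the energy identity together with the divergence of $\varsigma+\chi$. The endpoint $b$ is treated symmetrically.

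Combining the two cases gives $\dim S_a,\dim S_b\in\{1,2\}$ according to the finiteness of $\varsigma+\chi$, and the three-case formula for $\dim\ker(\Tmax)=\dim(S_a\cap S_b)$ then follows once $w_a$ and $w_b$ are known to be linearly independent. For that, suppose $w_a=c\,w_b$; then $w:=w_a$ lies in $\Hab$ and satisfies \eqref{eqnLDwaneara} at \emph{both} endpoints, so letting $\alpha\to a$ and $\beta\to b$ in the boundary identity yields $\spr{w}{g}=0$ for every $g\in\Hab$, forcing $w=0$ and a contradiction. I expect the main obstacle to be the limit-point step above, namely proving $\dim S_a=1$ and the vanishing $\lim_{\alpha\to a}(w_a^\qd w_a)(\alpha)=0$ when $\varsigma+\chi$ is infinite near $a$; this is precisely where the left-definiteness (positivity of $\spr{\cdot}{\cdot}$) and the growth of $\varsigma+\chi$ must be used in earnest, the rest being formal consequences of the decomposition and the boundary identity.
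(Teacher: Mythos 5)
Your architecture matches the proof the paper intends: the paper in fact gives no argument for this proposition at all, deferring to \cite[Theorem~2.6]{bebrwe} and \cite[Proposition~2.7]{bebrwe}, and that argument runs along exactly your lines. The parts you work out are correct: the identification $\ker(\Tmax)=H_0^1(a,b)^\perp$ via Proposition~\ref{propLDweakform} with $f_\tau=0$, the boundary identity $\spr{w}{g}_{(\alpha,\beta)}=w^\qd(\beta)g(\beta)^\ast-w^\qd(\alpha)g(\alpha)^\ast$ (with property~\eqref{hypgenLDsep} of Hypothesis~\ref{genhypLD} disposing of the atoms), the remark that at $z=0$ it is the ordinary Wronskian rather than $V$ that detects linear dependence, the finite-endpoint case, and the linear-independence argument, which is exactly the right use of \eqref{eqnLDwaneara} at both endpoints.

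The genuine gap is the step you cite instead of prove: when $\varsigma+\chi$ is infinite near $a$ you take $\dim S_a=1$ from \cite[Section~2]{bebrwe}, a reference that treats a half-line problem with regular left endpoint and non-measure coefficients; more seriously, your verification of \eqref{eqnLDwaneara} decomposes $g$ on $(a,c)$ into an $H_0^1(a,c)$-part plus a multiple of $w_a$, and this decomposition presupposes exactly the one-dimensionality being cited, so the citation carries the entire weight of the hard case. Both halves can be closed with tools you already set up. Existence ($\dim S_a\geq 1$): apply your first step on the interval $(a,c)$, with $c$ chosen so that $\chi((a,c))>0$; evaluation at $c$ is a continuous, nonzero linear functional on $H^1(a,c)$ (an elementary estimate using finiteness of $\varsigma+\chi$ near $c$ and positivity of $\chi$) which vanishes on $H_c^1(a,c)$, so $H_0^1(a,c)\neq H^1(a,c)$, and the kernel of the maximal relation on $(a,c)$ --- whose elements extend to precisely the solutions of $\tau u=0$ lying in $\Hab$ near $a$ --- is nontrivial. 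Upper bound ($\dim S_a\leq 1$), which also settles your final sub-claim: if $u$ is a real solution lying in $\Hab$ near $a$, then $A=\lim_{\alpha\rightarrow a}(uu^\qd)(\alpha)$ exists by the energy identity, and if $A\neq 0$ then the monotonicity of $uu^\qd$, of $u$ and of $u^\qd$ (from $du=u^\qd d\varsigma$, $du^\qd=u\,d\chi$ and sign considerations) forces $|u|$ and $|u^\qd|$ to stay bounded away from zero near $a$, contradicting square integrability against the infinite measure $\varsigma+\chi$; hence $A=0$ for every such $u$. If now $u$, $v$ were two linearly independent real solutions in $\Hab$ near $a$, the limits $C=\lim_{\alpha\rightarrow a}u^\qd(\alpha)v(\alpha)$ and $D=\lim_{\alpha\rightarrow a}u(\alpha)v^\qd(\alpha)$ exist by your boundary identity, satisfy $C+D=0$ (apply the vanishing to $u+v$) and $CD=\lim_{\alpha\rightarrow a}(uu^\qd)(\alpha)(vv^\qd)(\alpha)=0$, whence $C=D=0$ and the constant Wronskian $u^\qd v-uv^\qd=C-D$ would vanish --- a contradiction. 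With these two arguments inserted, your proof is complete and self-contained, which is more than the paper itself offers.
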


Also note that the functions 
\begin{align}\label{eqnKerTmaxInc}
x \mapsto w_a(x) w_a^\qd(x) \quad\text{and}\quad x\mapsto w_b(x) w^\qd_b(x)
\end{align}
 are increasing on $(a,b)$ and strictly positive and negative, respectively. 
Now for each fixed $c\in(a,b)$ we introduce the function
\begin{align}
 \delta_c(x) = \frac{1}{W(w_b,w_a)} \begin{cases}
                 w_a(x)w_b(c), & \text{if } x\in(a,c], \\
                 w_a(c)w_b(x), & \text{if } x\in(c,b),
               \end{cases}
\end{align}
with the usual Wronskian of $w_a$ and $w_b$ 
\begin{align*}
 W(w_b,w_a) = w_b(x)w_a^\qd(x) - w_b^\qd(x) w_a(x),
\end{align*}
where the right-hand side is independent of $x\in(a,b)$ and non-zero since $w_a$ and $w_b$ are linearly independent solutions of $\tau u=0$.
With this definition the point evaluation in $c$ is given by  
 \begin{align*}
  f(c) = \spr{f}{\delta_c}, \quad f\in \Hab.
 \end{align*}
 More precisely, this follows from splitting the integrals on the right-hand side, integrating by parts twice and using the properties of the functions $w_a$, $w_b$ from Proposition~\ref{propLDdecompH1}.
 Furthermore, if the measures $\varsigma$ and $\chi$ are finite near an endpoint, say $a$, then $f(x)$ has a finite limit as $x\rightarrow a$ for each $f\in \Hab$ and
 \begin{align*}
  f(a) = \lim_{\alpha\rightarrow a} f(\alpha) = \spr{f}{\delta_a}, \quad f\in \Hab,
 \end{align*}
where the function $\delta_a$ is given by
 \begin{align}
  \delta_a(x) = - \frac{w_b(x)}{w_b^\qd(a)}, \quad x\in(a,b). 
 \end{align}
 In fact, this follows from a simple integration by parts and Proposition~\ref{propLDdecompH1}.
 If $\varsigma$ and $\chi$ are finite near the right endpoint $b$, then obviously a similar result holds for $b$.
As a consequence of this, some function $f\in \Hab$ lies in $H_0^1(a,b)$ if and only if $f$ vanishes in each endpoint near which $\varsigma$ and $\chi$ are finite.

\section{Self-adjoint Sturm--Liouville relations}\label{secSA}

In the present section we are interested in self-adjoint restrictions of the maximal relation $\Tmax$. Therefore, we will first compute its adjoint relation. 
 
\begin{theorem}\label{thmLDTmax}
 The maximal relation $\Tmax$ is closed with adjoint given by
 \begin{align}
   \Tmax^\ast = \lbrace f\in\Tmax \,|\, \forall g\in\Tmax: V(f,g)(a)=V(f,g)(b) = 0 \rbrace.
 \end{align}
\end{theorem}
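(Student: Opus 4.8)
**Proof plan for Theorem 2.6 ($\Tmax^* = \{\ldots\}$).**

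The plan is to establish the two inclusions separately, using the abstract characterization of the adjoint of a linear relation together with the Lagrange identity~\eqref{eqnLDLagrangeSP}. Recall that for a relation $T$ in a Hilbert space, the adjoint is
\[
 T^* = \{(u,u_\tau) \mid \spr{u_\tau}{f} = \spr{u}{f_\tau} \text{ for all } (f,f_\tau)\in T\}.
\]
First I would prove $\Tmax^* \subseteq \Tmax$ together with the defining Wronskian conditions. So let $(u,u_\tau)\in\Tmax^*$. Testing against pairs $(g,g_\tau)\in T_{\mathrm{min}}$ (the pairs in $\Tmax$ with compact support, i.e.\ arising from $H_c^1(a,b)$) the boundary terms in the Lagrange identity vanish, so the adjoint relation $\spr{u_\tau}{g}=\spr{u}{g_\tau}$ reduces to exactly the weak-solution condition~\eqref{eqnLDweakformint} of Proposition~\ref{propLDweakform}. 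Hence $(u,u_\tau)\in\Tmax$. Once $u\in\Tmax$ is known, the adjoint condition $\spr{u_\tau}{g}-\spr{u}{g_\tau}=0$ for \emph{all} $g\in\Tmax$ becomes, via~\eqref{eqnLDLagrangeSP}, the requirement $V(u,g^*)(b)-V(u,g^*)(a)=0$ for all $g\in\Tmax$.

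The conjugation issue between $V(u,g^*)$ in the Lagrange identity and $V(u,g)$ in the statement is a cosmetic point: since $g$ ranges over all of $\Tmax$, replacing $g$ by $g^*$ simply reindexes the family, so the condition ``$V(u,g^*)(\cdot)$ has equal limits at both endpoints for all $g$'' is the same as ``$V(u,g)(\cdot)$ has equal limits at both endpoints for all $g$.'' The substantive step is to upgrade ``the two endpoint limits of $V(u,g)$ agree'' to ``each endpoint limit vanishes individually.'' The key idea is a decoupling argument at the two endpoints. Using Proposition~\ref{propLDdecompH1}, I would exhibit, for each prescribed pair of boundary values, an element $g\in\Tmax$ that realizes arbitrary Wronskian behavior at one endpoint while being controllable at the other; concretely, one chooses $g$ supported near a single endpoint (or built from the solutions $w_a$, $w_b$ and a compactly supported correction) so that $V(u,g)$ vanishes at the far endpoint. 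Then the equality of the two limits forces $V(u,g)(a)=0$, and symmetrically $V(u,g)(b)=0$. Since such $g$ can produce any admissible Wronskian value at the chosen endpoint, the conditions $V(u,g)(a)=V(u,g)(b)=0$ must hold for all $g\in\Tmax$.

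The reverse inclusion is the easier direction: if $(u,u_\tau)\in\Tmax$ satisfies $V(u,g)(a)=V(u,g)(b)=0$ for all $g\in\Tmax$, then~\eqref{eqnLDLagrangeSP} immediately gives $\spr{u_\tau}{g}-\spr{u}{g_\tau}=0$ for every $g\in\Tmax$, so $(u,u_\tau)\in\Tmax^*$. I expect the main obstacle to be the decoupling step above, namely constructing for each endpoint enough test pairs $g\in\Tmax$ to separate the boundary contributions; this is where the finiteness-dependent case distinction of Proposition~\ref{propLDdecompH1} and the explicit solutions $w_a$, $w_b$ (whose products $w_a w_a^\qd$, $w_b w_b^\qd$ are monotone by~\eqref{eqnKerTmaxInc}) enter, and where one must take care that the chosen $g$ genuinely lies in $\Hab$ (not merely in $\Tloc$) near the relevant endpoint. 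The closedness of $\Tmax$ claimed in the theorem follows once the adjoint is identified, since any adjoint relation is automatically closed and $\Tmax = \Tmax^{**}$ can be read off from the symmetric structure; alternatively it follows directly from the weak formulation, as the defining condition~\eqref{eqnLDweakformint} is preserved under the relevant limits.
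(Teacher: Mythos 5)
Your skeleton (two inclusions, weak formulation plus Lagrange identity) is reasonable and the reverse inclusion is indeed immediate from~\eqref{eqnLDLagrangeSP}, but your first, crucial step has a genuine gap. Trace through the matching in your reduction: for a test pair $(g,g_\tau)\in\Tmax$ with $g$ compactly supported, integrating by parts turns the adjoint condition $\spr{u_\tau}{g}=\spr{u}{g_\tau}$ into $\spr{u}{g_\tau}=\int_a^b u_\tau g_\tau^\ast\, d\varrho$, which is the weak-solution condition~\eqref{eqnLDweakformint} with test function $h=g_\tau$, i.e.\ the \emph{second} component of the test pair. So what you actually obtain is~\eqref{eqnLDweakformint} only for those $h$ lying in the range of the compactly supported part of $\Tmax$ --- and this range is a \emph{proper} subspace of $H_c^1(a,b)$: if $g$ has compact support and $\tau g=h$, then to the right of $\supp(h)$ the function $g$ solves $\tau u=0$, so compact support of $g$ imposes two linear constraints on $h$ (orthogonality to a fundamental system of $\tau u=0$ with respect to $d\varrho$). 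Since Proposition~\ref{propLDweakform} requires \emph{all} of $H_c^1(a,b)$, missing a finite-codimensional set is fatal. Concretely, whenever $\ker(\Tmax)\neq\lbrace 0\rbrace$ (Proposition~\ref{propLDdecompH1}), it contains one of the nowhere-vanishing solutions $w_a$, $w_b$, say $w$; the pair $(0,w)$ then satisfies your tested conditions against every compactly supported pair, because $w\,\bot\,H_0^1(a,b)$ by Proposition~\ref{propLDdecompH1}, yet $(0,w)\notin\Tmax$ since $w\notin\mul{\Tmax}$. Hence the implication you assert --- adjoint condition against compactly supported pairs implies membership in $\Tmax$ --- is false as stated.

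The missing idea is precisely the paper's auxiliary relation $\Tpre$: its elements have $f_\tau\in H_c^1(a,b)$ but the first component is a scalar multiple of $w_a$ near $a$ and of $w_b$ near $b$, so it is \emph{not} compactly supported. An explicit variation-of-constants formula shows $\ran(\Tpre)=H_c^1(a,b)$ --- this is what restores the full class of test functions --- while property~\eqref{eqnLDwaneara} still makes the boundary terms in the integration by parts vanish. With this replacement your first inclusion goes through; the paper then concludes $\Tmax=\Tpre^\ast$ (whence closedness, since adjoints are closed), and obtains the Wronskian characterization by squeezing $\Tmax^\ast=\overline{\Tpre}$ between the set $\lbrace f\in\Tmax \,|\, \forall g\in\Tmax: V(f,g)(a)=V(f,g)(b)=0\rbrace$ and $\Tmax^\ast$ itself, using the continuity of $V(\,\cdot\,,\cdot\,)(a)$ and $V(\,\cdot\,,\cdot\,)(b)$ from Lemma~\ref{lemLDlagrange}. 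Note that this also spares you the endpoint-decoupling construction you flagged as the main obstacle: one never needs to manufacture elements realizing prescribed Wronskian data, only to observe that elements of $\Tpre$ have vanishing Wronskians against all of $\Tmax$ at both endpoints and that this persists under closure. Finally, your remark that $\Tmax=\Tmax^{\ast\ast}$ ``can be read off from the symmetric structure'' is circular (that identity is equivalent to closedness); the alternative you mention, closedness via the weak formulation, is the sound one.
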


\begin{proof}
 Let $\Tpre\subseteq\Tmax$ consist of all $f\in\Tmax$ such that $f_\tau\in H_c^1(a,b)$, $f$ is a scalar multiple of $w_a$ near $a$ and a scalar multiple of $w_b$ near $b$.
 Then the range of $\Tpre$ is actually equal to $H_c^1(a,b)$. Indeed, if $g\in H_c^1(a,b)$ is given, then the function
 \begin{align*}
  f(x) = W(w_b,w_a)^{-1}\left( w_b(x) \int_a^x w_a g\, d\varrho + w_a(x) \int_x^b w_b g\, d\varrho\right), \quad x\in(a,b)
 \end{align*}
 is a solution of $\tau f=g$ (see~\cite[Proposition~3.3]{measureSL}) which is a scalar multiple of $w_a$, $w_b$ near the respective endpoints and hence $g\in\ran(\Tpre)$.
 Moreover, for each $f\in\Tpre$, $g\in\Tmax$ the limits of $V(f,g)(x)$ as $x\rightarrow a$ and as $x\rightarrow b$ vanish in view of Proposition~\ref{propLDdecompH1}.
 Hence Lemma~\ref{lemLDlagrange} shows that $\Tmax\subseteq\Tpre^\ast$. Conversely, if $(f_1,f_2)\in\Tpre^\ast$, then integration by parts and Proposition~\ref{propLDdecompH1} show that
 \begin{align*}
  \spr{f_1}{g_\tau} = \spr{f_2}{g} =  \int_a^b f_2(x) g_\tau(x)^\ast d\varrho(x), \quad g\in\Tpre. 
 \end{align*}
 Now since $\ran(\Tpre)= H_c^1(a,b)$ we infer that $(f_1,f_2)\in\Tmax$ in view of Proposition~\ref{propLDweakform}.
 Thus $\Tmax$ is the adjoint of $\Tpre$ and hence closed.
 Finally we obtain
 \begin{align*}
  \Tmax^\ast = \overline{\Tpre} \subseteq \lbrace f\in\Tmax \,|\, \forall g\in\Tmax: V(f,g)(a)=V(f,g)(b) = 0 \rbrace \subseteq \Tmax^\ast,
 \end{align*}
 where we used Lemma~\ref{lemLDlagrange} and the continuity of $V(\,\cdot\,,\cdot\,)(a)$ and $V(\,\cdot\,,\cdot\,)(b)$.
\end{proof}

The adjoint of $\Tmax$ is referred to as the minimal relation $\Tmin$. This linear relation is obviously symmetric with adjoint $\Tmax$.
 Since $\Tmin$ is real with respect to the natural conjugation on $\Hab$, its deficiency indices are equal (see \cite[Theorem~4.9]{measureSL}) and at most two because there are only two linearly independent solutions of $(\tau-\I)u=0$. 
 In particular, this shows that $\Tmax$ always has self-adjoint restrictions. 
%
%
%
%
%
%
However, the actual deficiency index of $\Tmin$ depends on which cases in the following alternative (see \cite[Lemma~4]{benbro}) prevail.  At each endpoint, either 
\begin{enumerate}
\item\label{itemlcc} for every $z\in\C^\times$  all solutions of $(\tau-z)u=0$ lie in $\Hab$ near this endpoint or
\item\label{itemlpc} for every $z\in\C^\times$ there is a solution of $(\tau-z)u=0$ which does not lie in $\Hab$ near this endpoint.
\end{enumerate} 
Here and henceforth, the cross indicates that zero is removed from the respective set. 
The former case~\eqref{itemlcc} is referred to as the limit-circle (l.c.) case and the latter~\eqref{itemlpc} as the limit-point (l.p.) case.
%
%
%
 Unlike in the right-definite theory, there is a precise criterion for the l.c.\ case to prevail in terms of our measure coefficients. In fact, \cite[Theorem~3]{benbro}  shows that $\tau$ is in the l.c.\ case at an endpoint if and only if $\varsigma$, $\chi$ are finite near this endpoint and the function $\int_c^x d\varrho$, $x\in(a,b)$
 is square integrable with respect to $\varsigma$ near this endpoint for some $c\in(a,b)$. 
 Furthermore, this theorem also ensures that  all solutions of $\tau u=0$ lie in $\Hab$ near an endpoint, if $\tau$ is in the l.c.\ case there. 
 However, note that it is possible that $\tau$ is in the l.p.\ case at an endpoint although all solutions of $\tau u=0$ lie in $\Hab$ near this endpoint.
 Now along the lines of the corresponding proofs in the right-definite case \cite[Section~5]{measureSL}, one may show the following result.

\begin{theorem}\label{thmLDdefind}
The deficiency index of $\Tmin$ is given by
\begin{align}
 n(\Tmin) = \begin{cases}
              0, & \text{if }\tau\text{ is in the l.c.\ case at no endpoint}, \\
              1, & \text{if }\tau\text{ is in the l.c.\ case at precisely one endpoint}, \\
              2, & \text{if }\tau\text{ is in the l.c.\ case at both endpoints.} 
            \end{cases}
\end{align}
\end{theorem}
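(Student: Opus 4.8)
The plan is to identify the deficiency index with the number of square-integrable solutions of $(\tau-\I)u=0$ and then count these at each endpoint. Since $\Tmin$ is defined as $\Tmax^\ast$ and $\Tmax$ is closed, we have $\Tmin^\ast=\Tmax$; moreover, as recorded in the text, $\Tmin$ is real with coinciding deficiency indices (see \cite[Theorem~4.9]{measureSL}), so $n(\Tmin)=\dim\ker(\Tmax-\I)$. Now $\ker(\Tmax-\I)=\{f:(f,\I f)\in\Tmax\}$, and for the nonzero value $\I$ the membership $(f,\I f)\in\Tmax$ reduces to $f\in\Hab$ together with $f\in\Deftau$ and $\tau f=\I f$ (note $\I f\in\Hab$ whenever $f\in\Hab$). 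Thus $\ker(\Tmax-\I)$ is precisely the space of solutions of $(\tau-\I)u=0$ lying in $\Hab$. By Theorem~\ref{thmLDexisuniq} this solution space is two-dimensional, and a solution lies in $\Hab$ if and only if it lies in $\Hab$ near $a$ and near $b$ simultaneously.

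Next I would localise. Write $S_a$ and $S_b$ for the spaces of solutions of $(\tau-\I)u=0$ lying in $\Hab$ near $a$ and near $b$ respectively, so that $\ker(\Tmax-\I)=S_a\cap S_b$. The limit-circle/limit-point alternative (\cite[Lemma~4]{benbro}) gives $\dim S_a=2$ when $\tau$ is l.c.\ at $a$, in which case $S_a$ is the full solution space, and $\dim S_a\le 1$ when $\tau$ is l.p.\ at $a$, since then some solution fails to lie in $\Hab$ near $a$; likewise for $b$. This already settles the l.c.--l.c.\ case, where $S_a=S_b$ is the whole two-dimensional solution space and $n(\Tmin)=2$. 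In the mixed case, say l.c.\ at $a$ and l.p.\ at $b$, one has $S_a\cap S_b=S_b$, so the assertion $n(\Tmin)=1$ amounts to $\dim S_b=1$; the upper bound is immediate, and the point to establish is that at least one nontrivial solution lies in $\Hab$ near the singular endpoint $b$.

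For the remaining l.p.--l.p.\ case I would show $S_a\cap S_b=\{0\}$ via the Lagrange identity. If $u\in S_a\cap S_b$, then $u\in\Hab$ and $(u,\I u)\in\Tmax$, so \eqref{eqnLDLagrangeSP} in Lemma~\ref{lemLDlagrange} yields
\begin{align*}
 V(u,u^\ast)(b)-V(u,u^\ast)(a)=\spr{\I u}{u}-\spr{u}{\I u}=2\I\spr{u}{u}.
\end{align*}
Hence it suffices to prove that at an l.p.\ endpoint the boundary value of the modified Wronskian vanishes, i.e.\ $\lim_{x\to b}V(f,g^\ast)(x)=0$ for all $f,g$ lying in $\Tmax$ near $b$ (and symmetrically at $a$); the right-hand side above then vanishes, forcing $\spr{u}{u}=0$, so $u=0$ and $n(\Tmin)=0$.

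The two facts flagged above — that an l.p.\ endpoint carries exactly one solution (up to scalars) in $\Hab$ near it, and that the modified Wronskian vanishes there on all of $\Tmax$ — carry the genuine analytic content and form the main obstacle. I expect to obtain them exactly as in the right-definite measure theory, following the Weyl nesting-circle construction of \cite[Section~5]{measureSL}: for $\im z\neq 0$ one shows that the Weyl disks associated with the regular truncations of $(a,b)$ contract to a nonempty limit set as $x\to b$, producing a distinguished solution that is square-integrable near $b$ and whose coefficient in any $\Hab$-function near $b$ controls the boundary Wronskian. The continuity of $V(\,\cdot\,,\cdot\,)(b)$ as a bilinear form on $\Tmax$ from Lemma~\ref{lemLDlagrange} and the decomposition of $\Hab$ in Proposition~\ref{propLDdecompH1} are the tools that convert this construction into the vanishing statement; the remaining case distinctions are then pure linear algebra.
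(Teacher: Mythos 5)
Your proposal is correct and is essentially the proof the paper has in mind: the paper itself gives no argument but defers to the right-definite case \cite[Section~5]{measureSL}, whose skeleton is exactly your reduction of $n(\Tmin)$ to counting $\Hab$-solutions of $(\tau-\I)u=0$, the case analysis via the l.c./l.p.\ alternative, and the Lagrange-identity argument in the l.p.--l.p.\ case. The two facts you defer (existence of a nontrivial $\Hab$-solution near an l.p.\ endpoint via the nested Weyl disks, and the vanishing of $V(\,\cdot\,,\cdot\,)$ at such an endpoint, cf.\ Proposition~\ref{propLDlclpwronski}) are precisely the ingredients supplied by that reference, adapted to the modified Wronskian and the $\Hab$-norm.
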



Furthermore, it is also possible to adapt the proof of \cite[Lemma~5.6]{measureSL}, which shows that one is able to tell from the modified Wronskian whether $\tau$ is in the l.c.\ or in the l.p.\ case.

\begin{proposition}\label{propLDlclpwronski}
 The endpoint $a$ is in the l.p.\ case if and only if $V(f,g)(a) = 0$ for every $f$, $g\in\Tmax$. If $a$ is in the l.c.\ case, then there is a $v\in\Tmax$ with 
\begin{align}\label{eqnLDlcwronsk}
 V(v,v^\ast)(a) = 0 \quad\text{and}\quad V(f,v^\ast)(a)\not=0 \quad\text{for some }f\in\Tmax.
\end{align}
Similar results hold at the endpoint $b$.
\end{proposition}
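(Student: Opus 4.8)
The plan is to prove the assertions at the endpoint $a$; those at $b$ are completely analogous. I will use throughout that, by Lemma~\ref{lemLDlagrange}, the limit $V(f,g^\ast)(a)$ exists for all $f,g$ lying in $\Tmax$ near $a$ and that $B_a(f,g):=V(f,g^\ast)(a)$ is a continuous sesquilinear form on $\Tmax$; moreover, since $\Tmax$ is invariant under complex conjugation, the condition that $V(f,g)(a)=0$ for all $f,g\in\Tmax$ is equivalent to $B_a\equiv 0$.

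I would dispose of the l.c.\ case first, by an explicit construction. If $a$ is l.c., fix a real $z\not=0$ and pick two linearly independent real solutions $u_1,u_2$ of $(\tau-z)u=0$; by the characterization of the l.c.\ case recalled before Theorem~\ref{thmLDdefind} both lie in $\Hab$ near $a$. Gluing each of them to zero on a neighbourhood of $b$ yields $v,f\in\Tmax$ agreeing with $u_1,u_2$ near $a$. As $v$ is real we get $V(v,v^\ast)(a)=V(v,v)(a)=0$ for free, while, since $V(\,\cdot\,,\cdot\,)(a)$ depends only on the germ at $a$, $V(f,v^\ast)(a)=V(u_2,u_1)(a)$ equals the constant modified Wronskian of two independent solutions of the same equation with $z\not=0$, which is non-zero by the remark following Proposition~\ref{propLDLagrange}. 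This produces the $v$ demanded in~\eqref{eqnLDlcwronsk}.

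The substantial part is the l.p.\ case, where I must show $B_a\equiv 0$ on $\Tmax$. By Theorem~\ref{thmLDTmax} the form $B_a$ (and likewise $B_b$) annihilates $\Tmin=\Tmax^\ast$ in either argument, so it descends to the quotient $\Tmax/\Tmin$, which has dimension $2\,n(\Tmin)$. On this quotient the Lagrange identity~\eqref{eqnLDLagrangeSP} identifies $\Omega:=B_b-B_a$ with $\spr{f_\tau}{g}-\spr{f}{g_\tau}$, and $\Omega$ is non-degenerate precisely because $\Tmin=\Tmax^\ast$. I would then write $\Tmax/\Tmin=E_a\oplus E_b$, where $E_a$ is the image of those $f\in\Tmax$ vanishing near $b$ and $E_b$ that of those vanishing near $a$; this is a direct decomposition because germs at $a$ and at $b$ may be prescribed independently (using the same gluing as above) and because, by Theorem~\ref{thmLDTmax}, an element vanishing near both endpoints lies in $\Tmin$. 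Since $B_a$ and $B_b$ each depend only on the germ at the respective endpoint, they vanish on mixed pairs, so $E_a$ and $E_b$ are $\Omega$-orthogonal; non-degeneracy of $\Omega$ then forces $\Omega$, hence $B_a=-\Omega$ on $E_a$, to be non-degenerate there, and altogether $\mathrm{rank}(B_a)=\dim E_a$.

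It remains to compute $\dim E_a$, and this is where I expect the real difficulty to lie. The space $E_a$ is a purely local object at $a$: it is the quotient of germs at $a$ of $\Tmax$ by germs at $a$ of $\Tmin$, i.e.\ the contribution of the endpoint $a$ to the deficiency index. I would evaluate it by truncating to an interval $(a,c)$ with $c\in(a,b)$, at whose right endpoint $\tau$ is regular, and running the argument behind Theorem~\ref{thmLDdefind} on $(a,c)$: the regular endpoint $c$ contributes nothing to $E_a$, since the relevant functions vanish near $c$, so $\dim E_a$ equals $2$ in the l.c.\ case and $0$ in the l.p.\ case. In the l.p.\ case this gives $\mathrm{rank}(B_a)=\dim E_a=0$, i.e.\ $B_a\equiv 0$, as required. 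The main obstacle is precisely this last identification: converting the analytic content of the limit-point condition — that only a one-dimensional space of solutions of $(\tau-z)u=0$ lies in $\Hab$ near $a$ — into the statement that the boundary form at the possibly highly singular endpoint $a$ carries no rank.
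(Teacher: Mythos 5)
Your overall architecture for the limit\mbox{-}point direction --- the boundary forms annihilate $\Tmin=\Tmax^\ast$ by Theorem~\ref{thmLDTmax}, descend to the $2n(\Tmin)$\mbox{-}dimensional quotient $\Tmax/\Tmin$ on which $\Omega=B_b-B_a$ is non\mbox{-}degenerate, and one then localizes the rank at the two endpoints --- is indeed the standard route behind the proof the paper points to (the adaptation of the right\mbox{-}definite argument in \cite{measureSL}). The problem is that every place where you say ``gluing'' hides a step that is \emph{false as stated} in this measure setting. If $(f,f_\tau)\in\Tmax$, then on any interval where $\varrho$ vanishes the condition $\tau f=f_\tau$ in $\Llocr$ is vacuous, while membership in $\Deftau$ forces $-f^\qd+\int f\,d\chi$ to be constant there; hence the first component solves $\tau u=0$ on every $\varrho$\mbox{-}null interval. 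Take $\varrho=\delta_{x_0}$, a single point mass (exactly the multi\mbox{-}peakon situation the paper is designed to cover), and a solution $u_1$ of $(\tau-z)u=0$ with $u_1(x_0)\neq0$. Any $f\in\Tmax$ that agrees with $u_1$ near $a$ then agrees with $u_1$ on all of $(a,x_0]$ by uniqueness for $\tau u=0$, is continuous at $x_0$ by Hypothesis~\ref{genhypLD}~(\ref{hypgenLDsep}), and therefore restricts to a nontrivial solution of $\tau u=0$ on $(x_0,b)$; such an $f$ cannot vanish near $b$. So ``gluing each of them to zero on a neighbourhood of $b$'' is impossible in general: the correct constructions must patch onto multiples of the pairs $(w_a,0)$, $(w_b,0)$ and exploit the multi\mbox{-}valued part (pairs $(0,h)$), and proving that such patched elements exist is precisely the measure\mbox{-}theoretic work your proposal skips. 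The same unproved patching underlies your claim that ``germs at $a$ and at $b$ may be prescribed independently'', i.e.\ that $E_a+E_b$ exhausts $\Tmax/\Tmin$.

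The second, and central, gap is one you flag yourself: the identity $\dim E_a=0$ at a limit\mbox{-}point endpoint is never actually proved. Observe that $E_a=\lbrace 0\rbrace$ says exactly that every $f\in\Tmax$ vanishing near $b$ satisfies $V(f,g)(a)=0$ for all $g\in\Tmax$ --- a special case of the assertion being proved --- so deferring it to ``the argument behind Theorem~\ref{thmLDdefind}'' is close to circular: that theorem is itself only quoted in the paper, its proof computes deficiency indices from square\mbox{-}integrability of solutions (Weyl theory), not from ranks of boundary forms, and translating between the two is exactly the missing localization. A way to close the argument in the only nontrivial case ($a$ l.p., $b$ l.c., so $\dim\Tmax/\Tmin=2$) is to construct two elements $v_1,v_2\in\Tmax$ whose forms at $a$ vanish against all of $\Tmax$ for \emph{structural} reasons --- for instance elements equal near $a$ to $(c_iw_a,0)$, for which $V(v_i,g)(a)=-c_i\lim_{x\to a}w_a^\qd(x)g_\tau(x)=0$ by~\eqref{eqnLDwaneara} --- while near $b$ they equal solution pairs $(u_i,zu_i)$, $z\in\R^\times$, with $V(v_1,v_2^\ast)(b)=zW(u_1,u_2)\neq0$. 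Their images then span the two\mbox{-}dimensional quotient, every $f\in\Tmax$ is congruent modulo $\Tmin$ to a combination of them, and $V(f,g)(a)=0$ follows. But constructing $v_1,v_2$ requires precisely the corrected patching lemma described above; without it, neither half of your proof closes.
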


Because of the formal similarity with the right-definite theory, it is now easy to obtain a precise characterization of all self-adjoint restrictions of $\Tmax$ in terms of boundary conditions at all endpoints which are in the l.c.\ case. This can be done following literally the proofs in \cite[Section~6]{measureSL}. However, since we are only interested in separated boundary conditions we only state the following result.

\begin{theorem}\label{thmSAsep}
 Let $v_a$, $v_b\in\Tmax$ such that
 \begin{subequations}
  \begin{align}\label{eqnLDbcfunca}
   V(v_{a},v_{a}^\ast)(a) & = 0 \quad\text{and}\quad V(f,v_{a}^\ast)(a) \not=0 \quad\text{for some }f\in\Tmax, \\
  \label{eqnLDbcfuncb}
  V(v_b,v_b^\ast)(b) & = 0 \quad\text{and}\quad V(f,v_b^\ast)(b) \not=0 \quad\text{for some }f\in\Tmax,
 \end{align}\end{subequations}
 if $\tau$ is in the l.c.\ case at $a$, $b$, respectively.  Then the linear relation 
 \begin{align}\label{eqnLDselfadjsep}
  S = \lbrace f\in\Tmax \,|\, V(f,v_a^\ast)(a) = V(f,v_b^\ast)(b) = 0 \rbrace
 \end{align}
 is a self-adjoint restriction of $\Tmax$.
 \end{theorem}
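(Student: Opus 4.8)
The plan is to establish that $S$ is self-adjoint by showing the inclusions $\Tmin\subseteq S\subseteq\Tmax$, proving that $S$ is symmetric, and then matching dimensions with the deficiency index from Theorem~\ref{thmLDdefind}. First I would check that $S$ extends $\Tmin$: for $f\in\Tmin=\Tmax^\ast$ Theorem~\ref{thmLDTmax} gives $V(f,g)(a)=V(f,g)(b)=0$ for all $g\in\Tmax$, and since the natural conjugation leaves $\Tmax$ invariant, this applies to $g=v_a^\ast$ and $g=v_b^\ast$, so that $f\in S$. I would also note that at an endpoint in the limit-point case the corresponding Wronskian condition in the definition~\eqref{eqnLDselfadjsep} of $S$ is automatically satisfied by Proposition~\ref{propLDlclpwronski}, so $S$ carries genuine constraints only at endpoints in the limit-circle case.

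For symmetry I would invoke the Lagrange identity~\eqref{eqnLDLagrangeSP}: for $f,g\in S$ one has $\spr{f_\tau}{g}-\spr{f}{g_\tau}=V(f,g^\ast)(b)-V(f,g^\ast)(a)$, so it suffices to show that each boundary term vanishes. At an endpoint in the limit-point case this is immediate from Proposition~\ref{propLDlclpwronski}. At an endpoint in the limit-circle case, say $a$, I would use that $V(\,\cdot\,,\cdot\,)(a)$ depends only on the boundary data at $a$ and vanishes whenever one argument lies in $\Tmin$, so that $V(f,v_a^\ast)(a)$ factors through the two-dimensional space of boundary data at $a$ available in the limit-circle case. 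Fixing $f_0\in\Tmax$ with $V(f_0,v_a^\ast)(a)\neq0$ from~\eqref{eqnLDbcfunca}, the classes of $v_a$ and $f_0$ form a basis of this space; writing $f\equiv\lambda v_a+\nu f_0$ modulo $\Tmin$ and pairing with $v_a^\ast$ gives $0=V(f,v_a^\ast)(a)=\lambda\,V(v_a,v_a^\ast)(a)+\nu\,V(f_0,v_a^\ast)(a)=\nu\,V(f_0,v_a^\ast)(a)$, whence $\nu=0$ and $f\equiv\lambda v_a$. The same reasoning applies to $g\equiv\mu v_a$, so that $V(f,g^\ast)(a)=\lambda\bar\mu\,V(v_a,v_a^\ast)(a)=0$ by the isotropy relation in~\eqref{eqnLDbcfunca}. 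Treating $b$ identically yields $S\subseteq S^\ast$.

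I would then finish with a dimension count. Each genuine condition defining $S$ is a nonzero linear functional on $\Tmax/\Tmin$: it vanishes on $\Tmin$ and is nontrivial by~\eqref{eqnLDbcfunca} respectively~\eqref{eqnLDbcfuncb}. Moreover the functionals attached to $a$ and to $b$ act on independent, decoupled pieces of the quotient (one can prescribe boundary data at one endpoint while leaving the other trivial), so imposing all of them reduces $\dim(\Tmax/\Tmin)$ by exactly the number $n(\Tmin)$ of limit-circle endpoints from Theorem~\ref{thmLDdefind}. Since the equal deficiency indices give $\dim(\Tmax/\Tmin)=2\,n(\Tmin)$, this yields $\dim(S/\Tmin)=n(\Tmin)$. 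A symmetric extension $S$ of $\Tmin$ with $S\subseteq S^\ast\subseteq\Tmax$ and $\dim(S/\Tmin)=n(\Tmin)$ is self-adjoint, since the boundary form on $\Tmax/\Tmin$ pairs $S/\Tmin$ with $S^\ast/\Tmin$ nondegenerately and forces $\dim(S^\ast/\Tmin)=n(\Tmin)$ as well, so equality of finite dimensions gives $S=S^\ast$.

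I expect the main obstacle to be the limit-circle symmetry step, that is, making rigorous that $V(\,\cdot\,,\cdot^\ast)(a)$ descends to a nondegenerate (skew-Hermitian) form on a two-dimensional space and that the isotropy condition $V(v_a,v_a^\ast)(a)=0$ in~\eqref{eqnLDbcfunca} forces the solution space of the boundary constraint to be isotropic; the explicit basis $\{v_a,f_0\}$ above is the mechanism, and its independence is exactly where Proposition~\ref{propLDlclpwronski} and the limit-circle structure enter. As an alternative to the basis bookkeeping, one could derive $V(f,g^\ast)(a)=0$ directly from the isotropy relations by a suitable application of the Pl\"ucker identity (Proposition~\ref{propLDplucker}) together with the function realizing $V(\,\cdot\,,v_a^\ast)(a)\neq0$. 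The remaining steps are routine consequences of the Lagrange identity and the deficiency index theorem.
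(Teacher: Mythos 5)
Your strategy is genuinely different from the paper's. The paper gives no self-contained proof of this theorem: it appeals to the right-definite proofs in \cite[Section~6]{measureSL}, where the standard argument establishes symmetry of $S$ via the Pl\"ucker identity and then proves $S^\ast\subseteq S$ directly, by testing the Lagrange bracket against elements of $S$ that carry prescribed boundary data at one endpoint and vanish identically near the other (a patching construction). Your closing step is instead a von Neumann--type dimension count: $\dim(\Tmax/\Tmin)=2\,n(\Tmin)$, the boundary form $[f,g]=\spr{f_\tau}{g}-\spr{f}{g_\tau}$ descends to a nondegenerate form on $\Tmax/\Tmin$ (its radical is $\Tmax^\ast=\Tmin$ by Theorem~\ref{thmLDTmax} and Lemma~\ref{lemLDlagrange}), hence $\dim(S^\ast/\Tmin)=2n(\Tmin)-\dim(S/\Tmin)$, and a symmetric extension with $\dim(S/\Tmin)=n(\Tmin)$ is self-adjoint. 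That criterion, and your verification of $\Tmin\subseteq S$, are correct.

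Two steps, however, are asserted rather than proved. First, your main symmetry argument consumes the statement that the boundary-data space at an l.c.\ endpoint is \emph{exactly} two-dimensional with $\lbrace [v_a],[f_0]\rbrace$ a basis. What you actually verify is only linear independence, i.e.\ dimension at least two; what the argument needs is the spanning property, dimension at most two, i.e.\ that every $f\in\Tmax$ has the same data at $a$ as some combination of $v_a$ and $f_0$. This does not follow from Proposition~\ref{propLDlclpwronski} or Theorem~\ref{thmLDdefind}; it is a structural fact about the l.c.\ case (every element of $\Tmax$ shares its data at $a$ with a solution of $(\tau-\I)u=0$), and proving it is precisely the work the paper defers to \cite{measureSL}. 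So the difficulty sits in the opposite half from where you place it. Second, your dimension count invokes decoupling of the two endpoints (``prescribe data at one endpoint while leaving the other trivial''), which is the same unproven patching fact in disguise.

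Both gaps are repairable inside your framework. For the first, the Pl\"ucker route you mention in passing works and needs nothing beyond Proposition~\ref{propLDplucker}, Lemma~\ref{lemLDlagrange} and conjugation-invariance of $\Tmax$: for $f$, $g\in S$ evaluate the Pl\"ucker identity at $a$, first on $(g,v_a,v_a^\ast,f_0)$,
\begin{align*}
 0 = V(g,v_a)V(v_a^\ast,f_0) + V(g,v_a^\ast)V(f_0,v_a) + V(g,f_0)V(v_a,v_a^\ast),
\end{align*}
where the last two products vanish at $a$ by the definition of $S$ and~\eqref{eqnLDbcfunca}, while $V(v_a^\ast,f_0)(a)=-V(f_0,v_a^\ast)(a)\not=0$; hence $V(g,v_a)(a)=0$. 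Then on $(f,g^\ast,v_a^\ast,f_0)$,
\begin{align*}
 0 = V(f,g^\ast)V(v_a^\ast,f_0) + V(f,v_a^\ast)V(f_0,g^\ast) + V(f,f_0)V(g^\ast,v_a^\ast),
\end{align*}
where the middle term vanishes and $V(g^\ast,v_a^\ast)(a)=V(g,v_a)(a)^\ast=0$, so $V(f,g^\ast)(a)=0$; at an l.p.\ endpoint the boundary term vanishes for free by Proposition~\ref{propLDlclpwronski}. For the second, no decoupling is needed: $S$ is cut out of the $2n(\Tmin)$-dimensional quotient $\Tmax/\Tmin$ by at most $n(\Tmin)$ linear conditions (one per l.c.\ endpoint, each descending to the quotient since it annihilates $\Tmin$), so $\dim(S/\Tmin)\geq n(\Tmin)$, while symmetry and nondegeneracy of the boundary form give $\dim(S/\Tmin)\leq n(\Tmin)$; equality then forces $S/\Tmin=(S/\Tmin)^{\perp}=S^\ast/\Tmin$, i.e.\ $S=S^\ast$. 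With these two repairs your proof is complete, and it remains a genuinely different route from the one the paper cites.
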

 
 Note that boundary conditions at endpoints which are in the l.p.\ case are superfluous, since in this case each $f\in\Tmax$ satisfies them in view of Proposition~\ref{propLDlclpwronski}. 
 Furthermore, Theorem~\ref{thmSAsep} actually gives all possible self-adjoint restrictions of $\Tmax$ provided that $\tau$ is not at both endpoints in the l.c.\ case.


If $\tau$ is regular at an endpoint, say $a$, then the boundary condition
at this endpoint may be given in a simpler form. In fact, if some $v_a\in\Tmax$ with~\eqref{eqnLDbcfunca} is given, then it can be shown that there is some $\varphi_a\in[0,\pi)$ such that for each $f\in\Tmax$ 
\begin{align}\label{eqnLDbcreg}
 V(f,v_a^\ast)(a) = 0 \quad\Leftrightarrow\quad f_\tau(a) \cos\varphi_a - f^\qd(a) \sin\varphi_a = 0.
\end{align}
Conversely, if some $\varphi_a\in[0,\pi)$ is given, then there is a $v_a\in\Tmax$ with~\eqref{eqnLDbcfunca} such that~\eqref{eqnLDbcreg} holds for all $f\in\Tmax$. The boundary conditions corresponding to $\varphi_a=0$ are called Dirichlet boundary conditions, whereas the ones corresponding to $\varphi_a=\pi/2$ are called Neumann boundary conditions. Moreover, note that for a solution of $(\tau-z)u=0$ with $z\in\C$, the boundary condition at $a$ takes the form
\begin{align*}
 z u(a) \cos\varphi_a - u^\qd(a) \sin\varphi_a = 0.
\end{align*}



As in \cite[Corollary~8.4]{measureSL}, one may show using Proposition~\ref{propLDlclpwronski} and the Pl\"ucker identity that all non-zero eigenvalues of self-adjoint restrictions $S$ with separated boundary conditions are simple. 
However, it might happen that zero is a double eigenvalue indeed. 
 This is due to the fact that there are cases in which all solutions of $\tau u=0$ lie in $\Tmax$ and satisfy the possible boundary condition near some endpoint.
 For example, this happens for Dirichlet boundary conditions at a regular endpoint or if $\varsigma$ and $\chi$ are finite near an endpoint which is in the l.p.\ case.

\begin{theorem}\label{thmLDresolvent}
Suppose $S$ is a self-adjoint restriction of $\Tmax$ with separated boundary conditions and let $z\in\rho(S)^\times$. Furthermore, let $u_a$ and $u_b$ be non-trivial solutions of $(\tau-z)u=0$ such that
 \begin{align*}
  u_{a/b}\, \begin{cases}
         \text{satisfies the boundary condition at }a/b\text{ if }\tau\text{ is in the l.c.\ case at }a/b, \\
         \text{lies in }\Hab\text{ near }a/b\text{ if }\tau\text{ is in the l.p.\ case at }a/b.
      \end{cases}
 \end{align*}
 Then the resolvent $R_z$ is given by
 \begin{align}\label{eqnLDres}
  R_z g(x) & = \spr{g}{G_z(x,\cdot\,)^\ast}, \quad x\in(a,b),~g\in \Hab,
 \end{align}
 where
 \begin{align}
  G_z(x,y) + \frac{\delta_x(y)}{z} = \frac{1}{V(u_b,u_a)}\begin{cases}
               u_a(y) u_b(x), & \text{if }y\leq x, \\
               u_a(x) u_b(y), & \text{if }y>x. \\
             \end{cases} 
 \end{align}
 \end{theorem}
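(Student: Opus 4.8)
The plan is to verify that the claimed formula for $R_z$ indeed produces the resolvent, i.e.\ that for every $g\in\Hab$ the function $f(x)=\spr{g}{G_z(x,\cdot\,)^\ast}$ satisfies $(f,g+zf)\in S$, and that $f$ lies in $\Hab$ with the correct boundary behavior. First I would observe that the Green's function is written as the familiar right-definite-looking kernel built from the solutions $u_a$, $u_b$ minus the correction term $\delta_x(y)/z$. The reason for the correction is that $S$ is a relation in the left-definite space $\Hab$ rather than an operator in an $L^2(\varrho)$ space, so the point evaluation $f(c)=\spr{f}{\delta_c}$ from the previous section must be absorbed; the factor $1/z$ reflects that $(\tau-z)\delta_x$ differs from $\tau\delta_x$ by a multiple of $\delta_x$ itself.

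The key computational step is to show that the candidate $f$ solves the inhomogeneous equation. I would use the reproducing-type identity $f(c)=\spr{f}{\delta_c}$ together with the resolvent ansatz to rewrite $\spr{g}{G_z(x,\cdot)^\ast}$ as the standard variation-of-parameters solution
\begin{align*}
 \tilde f(x) = \frac{1}{V(u_b,u_a)}\left( u_b(x)\int_a^x u_a\,g\,d\varrho + u_a(x)\int_x^b u_b\,g\,d\varrho \right)
\end{align*}
minus $z^{-1}\spr{g}{\delta_x^\ast}$, recognizing the first piece as a solution of $(\tau-z)\tilde f = g$ (cf.\ the formula used in the proof of Theorem~\ref{thmLDTmax}, applied with $u_a$, $u_b$ in place of $w_a$, $w_b$ and the equation $(\tau-z)u=0$). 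Applying $\tau-z$ and using that $(\tau-z)\delta_x$ contributes the point mass that exactly cancels the unwanted term coming from $\tau$ acting on the kernel, one finds $(\tau-z)f=g$, so that $(f,g+zf)\in\Tloc$.

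Next I would check membership in $S$. Since $z\in\rho(S)^\times$, the solutions $u_a$, $u_b$ are the distinguished Weyl-type solutions: near each endpoint in the l.c.\ case they satisfy the boundary condition defining $S$, and in the l.p.\ case they are the (unique up to scalars) solutions lying in $\Hab$ near that endpoint. The correction term $\delta_x$ is built from $w_a$, $w_b$ which lie in $\Hab$ near the respective endpoints and satisfy~\eqref{eqnLDwaneara}, so it contributes no boundary obstruction. Thus $f$ lies in $\Hab$ and, via Proposition~\ref{propLDlclpwronski}, the boundary conditions $V(f,v_a^\ast)(a)=V(f,v_b^\ast)(b)=0$ hold because the relevant Wronskians are computed against $u_a$, $u_b$ which already satisfy them; in the l.p.\ case they vanish automatically. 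Hence $(f,g+zf)\in S$, and since $z\in\rho(S)$ this pair is unique, identifying $f=R_z g$.

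The main obstacle I expect is the bookkeeping of the $\delta_x/z$ correction term, specifically verifying that the point masses of $\varrho$ and $\chi$ generated by differentiating the piecewise kernel cancel correctly so that $(\tau-z)f=g$ holds \emph{as an identity in} $\Llocr$, including at points carrying mass. Property~\eqref{hypgenLDsep} of Hypothesis~\ref{genhypLD} (that $\varsigma$ shares no point masses with $\chi$ or $\varrho$) together with the integration-by-parts formula~\eqref{eqnLDpartint} will be essential here, and one must be careful with the left-continuous versus right-hand-limit conventions when evaluating $\delta_x(y)$ and its quasi-derivative across the diagonal $y=x$. Once this cancellation is confirmed and the continuity of $V(\cdot,\cdot)(a)$, $V(\cdot,\cdot)(b)$ from Lemma~\ref{lemLDlagrange} is invoked for the boundary terms, the remaining verifications are routine.
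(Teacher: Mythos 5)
Your overall strategy---verify that $f(x)=\spr{g}{G_z(x,\cdot\,)^\ast}$ produces a pair $(f,g+zf)\in S$ and then invoke uniqueness from $z\in\rho(S)^\times$---is the same in spirit as the paper's proof, but two of your concrete steps are wrong and the hardest part of the argument is missing. First, the normalization: for solutions of $(\tau-z)u=0$ one has $V(u_b,u_a)=z\,W(u_b,u_a)$, so the variation-of-parameters solution of $(\tau-z)f=g$ must carry the coefficient $1/W(u_b,u_a)=z/V(u_b,u_a)$; with your coefficient $1/V(u_b,u_a)$ the function $\tilde f$ solves $(\tau-z)\tilde f=g/z$, not $g$. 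Second, the identity $\spr{g}{G_z(x,\cdot\,)^\ast}=\tilde f(x)-z^{-1}g(x)$ is false. The pairing of $g$ with the kernel part is an integral against $\chi$ and $\varsigma$, not against $\varrho$; integrating by parts (using $du_{a/b}^\qd=u_{a/b}\,d\chi-zu_{a/b}\,d\varrho$ and Hypothesis~\ref{genhypLD}~(\ref{hypgenLDsep})) converts it into $z/V(u_b,u_a)$ times the $\varrho$-integrals \emph{plus} the term $+g(x)/z$ coming from the quasi-derivative jump across the diagonal, \emph{plus} boundary terms at $a$ and $b$. The $-\delta_x/z$ correction exactly cancels the $+g(x)/z$, so that for compactly supported $g$ one ends up with $\spr{g}{G_z(x,\cdot\,)^\ast}=f_g(x)$, the correctly normalized variation-of-parameters solution, with \emph{no} residual $-g(x)/z$ term.

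Third, and most seriously, the boundary terms at $a$ and $b$ produced by this integration by parts do not vanish for general $g\in\Hab$, and your proposal never addresses them. This is precisely why the paper splits $\Hab=H_0^1(a,b)\oplus\ker(\Tmax)$ via Proposition~\ref{propLDdecompH1}: for $g\in H_c^1(a,b)$ there are no boundary terms and $R_zg=f_g$ because $f_g$ is a scalar multiple of $u_a$ near $a$ and of $u_b$ near $b$; this extends to $H_0^1(a,b)$ by continuity of $R_z$. On $\ker(\Tmax)$ your formula fails: for $w\in\ker(\Tmax)$ one obtains instead
\begin{align*}
 \spr{w}{G_z(x,\cdot\,)^\ast} = \frac{1}{z} \frac{V(u_b,w)(b)}{V(u_b,u_a)(b)}\, u_a(x) + \frac{1}{z} \frac{V(w,u_a)(a)}{V(u_b,u_a)(a)}\, u_b(x) - \frac{w(x)}{z},
\end{align*}
where the particular solution is now $-w/z$ (indeed $(\tau-z)(-w/z)=w$ since $\tau w=0$) and the multiples of $u_a$, $u_b$ are exactly the surviving boundary terms; showing that this function lies in $S$ requires Proposition~\ref{propLDlclpwronski} (to kill these terms at l.p.\ endpoints, so the function lies in $\Hab$) and the Pl\"ucker identity (to check the boundary conditions at l.c.\ endpoints), neither of which your sketch engages at the point where it is needed. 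Finally, a small but necessary item you omit: $u_a$ and $u_b$ are linearly independent (otherwise $z$ would be an eigenvalue of $S$), which is what makes $V(u_b,u_a)\neq 0$ and the kernel well defined.
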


\begin{proof}
 First of all, the solutions $u_a$, $u_b$ are linearly independent, since otherwise $z$ would be an eigenvalue of $S$. 
  Now if $g\in H_c^1(a,b)$, then $f_g$ given by
 \begin{align*}
  f_g(x) = \frac{z}{V(u_b,u_a)} \left( u_b(x)\int_a^x u_a g\, d\varrho + u_a(x) \int_x^b u_b g\, d\varrho\right), \quad x\in(a,b)
 \end{align*}
 is a solution of $(\tau-z)f=g$ because of~\cite[Proposition~3.3]{measureSL}.
  Moreover, $f_g$ is a scalar multiple of $u_a$ near $a$ and a scalar multiple of $u_b$ near $b$. 
  As a consequence $f_g\in\Tmax$ satisfies the boundary conditions of $S$ and therefore $R_z g=f_g$. 
  Now an integration by parts shows that $R_z g$ is given as in~\eqref{eqnLDres}. Furthermore, by continuity this holds for all $g\in H_0^1(a,b)$.
 Hence it remains to consider $R_z w$ for $w\in\ker(\Tmax)$. In this case integration by parts yields
 \begin{align*}
  \spr{w}{G_z(x,\cdot\,)^\ast} & = \frac{1}{z} \frac{V(u_b,w)(b)}{V(u_b,u_a)(b)} u_a(x) + \frac{1}{z} \frac{V(w,u_a)(a)}{V(u_b,u_a)(a)} u_b(x) - \frac{w(x)}{z}, \quad x\in(a,b).
 \end{align*}
 Obviously, this function is a solution of $(\tau-z)f = w$, since $w$ is a solution of $\tau u=0$. 
 Moreover, if $\tau$ is in the l.p.\ case at $a$, then the second term vanishes in view of Proposition~\ref{propLDlclpwronski}.
  For the same reason the first term vanishes if $\tau$ is in the l.p.\ case at $b$ and hence this function even lies in $\Hab$.
  Using the Pl\"{u}cker identity one sees that this function also satisfies all possible boundary conditions.
\end{proof}


\section{Singular Weyl--Titchmarsh function}\label{secLDWeylTitch}

Let $S$ be some self-adjoint restriction of $\Tmax$ with separated boundary conditions as in Theorem~\ref{thmSAsep}. 
 In this section we will introduce a singular Weyl--Titchmarsh function as it has been done recently in~\cite{geszin}, \cite{kst2} and~\cite{measureSL} for the right-definite case. To this end we first need a non-trivial real analytic solution $\phi_z$, $z\in\C^\times$ of $(\tau-z)u=0$  such that $\phi_z$ lies in $S$ near $a$, i.e.\ $\phi_z$ lies in $\Hab$ near $a$ and satisfies the boundary condition at $a$ if $\tau$ is in the l.c.\ case there.

\begin{hypothesis}\label{hypLDrealentirefund}
For each $z\in\C^\times$ there is a non-trivial solution $\phi_z$ of $(\tau-z)u=0$ such that $\phi_z$ lies in $S$ near $a$ and the functions 
\begin{align}\label{eqnLDphianaly}
z\mapsto\phi_z(c) \quad\text{and}\quad z\mapsto\phi_z^\qd(c)
\end{align}
are real analytic in $\C^\times$ with at most poles at zero for each $c\in(a,b)$.
\end{hypothesis}

In order to introduce a singular Weyl--Titchmarsh function we furthermore need a second real analytic solution $\theta_z$, $z\in\C^\times$ of $(\tau-z)u=0$ with $V(\theta_z,\phi_z)=1$.

\begin{lemma}\label{lemLDtheta}
 If Hypothesis~\ref{hypLDrealentirefund} holds, then for each $z\in\C^\times$ there is a solution $\theta_z$ of $(\tau-z)u=0$ such that $V(\theta_z,\phi_z)=1$ and the functions 
 \begin{align}\label{eqnLDthetaanaly}
 z\mapsto\theta_z(c) \quad\text{and}\quad z\mapsto\theta_z^\qd(c)
 \end{align}
  are real analytic in $\C^\times$ with at most poles at zero for each $c\in(a,b)$. 
\end{lemma}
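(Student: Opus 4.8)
The plan is to reduce the claim to a Bézout-type division problem for analytic functions and to solve it via the Bézout property of the ring of entire functions. First I would fix a base point $x_0\in(a,b)$ and let $c_z$, $s_z$ be the fundamental system of $(\tau-z)u=0$ normalized by $c_z(x_0)=s_z^\qd(x_0)=1$ and $c_z^\qd(x_0)=s_z(x_0)=0$. By Theorem~\ref{thmLDexisuniq}, together with the standard entire dependence of the initial value problem on the spectral parameter, the maps $z\mapsto c_z(c)$, $z\mapsto c_z^\qd(c)$, $z\mapsto s_z(c)$, $z\mapsto s_z^\qd(c)$ are real entire for each fixed $c\in(a,b)$, and the ordinary Wronskian $W(f,g)=fg^\qd-f^\qd g$ satisfies $W(c_z,s_z)=1$. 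Writing $a(z)=\phi_z(x_0)$ and $b(z)=\phi_z^\qd(x_0)$, Hypothesis~\ref{hypLDrealentirefund} gives $\phi_z=a(z)\,c_z+b(z)\,s_z$ with $a$, $b$ real analytic in $\C^\times$ and at most poles at zero; moreover $a$ and $b$ have no common zero in $\C^\times$, since otherwise $\phi_z$ would vanish identically there by the uniqueness in Theorem~\ref{thmLDexisuniq}. Because a solution of $(\tau-z)u=0$ satisfies $u_\tau=zu$, the modified Wronskian of two solutions obeys $V(\theta_z,\phi_z)=z\,W(\theta_z,\phi_z)$. Making the ansatz $\theta_z=\tilde a(z)\,c_z+\tilde b(z)\,s_z$ thus yields $V(\theta_z,\phi_z)=z\big(\tilde a(z)b(z)-\tilde b(z)a(z)\big)$, so the task reduces to finding $\tilde a$, $\tilde b$, real analytic in $\C^\times$ with at most poles at zero, such that
\[
 \tilde a(z)\,b(z)-\tilde b(z)\,a(z)=\frac{1}{z}.
\]

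The key step is to solve this equation. I would first clear the singularities at zero: letting $m$ be the larger of the two pole orders of $a$ and $b$ at zero, the functions $A=z^m a$ and $B=z^m b$ extend to real entire functions. They still have no common zero in $\C^\times$, and by the minimality in the choice of $m$ at least one of them is non-zero at the origin, so $A$ and $B$ have no common zeros at all (if one of them vanishes identically, the other is zero-free and the claim below is immediate). The Bézout property of the ring of entire functions---two entire functions without common zeros generate the unit ideal---then yields entire $U$, $V$ with $UA+VB=1$; passing to $\tfrac12\big(U(z)+\overline{U(\bar z)}\big)$ and $\tfrac12\big(V(z)+\overline{V(\bar z)}\big)$, which again solve the identity since $A$, $B$ are real entire, I may assume $U$, $V$ real entire. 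Setting $\tilde a=z^{m-1}V$ and $\tilde b=-z^{m-1}U$ then produces $\tilde a\,b-\tilde b\,a=z^{-1}(UA+VB)=1/z$, and these functions are real analytic in $\C^\times$ with at most a simple pole at zero.

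Finally I would define $\theta_z=\tilde a(z)\,c_z+\tilde b(z)\,s_z$. As a linear combination of solutions, $\theta_z$ solves $(\tau-z)u=0$, and its value and quasi-derivative at any point of $(a,b)$ are the corresponding combinations of the entire functions $c_z$, $s_z$, $c_z^\qd$, $s_z^\qd$ with the coefficients $\tilde a$, $\tilde b$, hence real analytic in $\C^\times$ with at most poles at zero. By construction $V(\theta_z,\phi_z)=1$, so in particular $\theta_z$ is non-trivial and, by the remark following Proposition~\ref{propLDLagrange}, linearly independent of $\phi_z$ for $z\in\C^\times$. I expect the division step to be the main obstacle: producing $\tilde a$, $\tilde b$ with exactly the prescribed analyticity rests on the non-trivial Bézout-type fact for entire functions, and one must still carry out the bookkeeping at $z=0$ needed to retain only an admissible pole there and to preserve reality.
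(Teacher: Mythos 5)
Your proposal is correct and is essentially the paper's own argument written out in full: the paper simply cites \cite[Lemma~2.4]{kst2} --- whose proof is exactly this B\'ezout-type construction, producing a companion solution $u_z$ with unit ordinary Wronskian $W(u_z,\phi_z)=1$ from the fact that the functions $z\mapsto\phi_z(x_0)$ and $z\mapsto\phi_z^\qd(x_0)$ are analytic with no common zeros --- and then sets $\theta_z=z^{-1}u_z$, using the identity $V(\theta_z,\phi_z)=zW(\theta_z,\phi_z)$ for solutions of $(\tau-z)u=0$. Your factor $z^{m-1}$, the symmetrization to restore reality, and the explicit clearing of the poles at the origin accomplish precisely that rescaling, together with the bookkeeping that applying the cited entire-function argument in the present meromorphic setting requires.
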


\begin{proof}
 Following literally the proof of~\cite[Lemma~2.4]{kst2} there is a real analytic solution $u_{z}$, $z\in\C^\times$ of $(\tau-z)u=0$ such that the usual Wronskian satisfies
 \begin{align*}
  W(u_z,\phi_z) = u_z(x) \phi_z^\qd(x) - u_z^\qd(x) \phi_z(x) = 1, \quad x\in(a,b),~z\in\C^\times.
 \end{align*}
 Now the solutions $\theta_z = z^{-1} u_z$, $z\in\C^\times$ have the claimed properties.
\end{proof}

Given a real analytic fundamental system $\theta_z$, $\phi_z$, $z\in\C^\times$ of $(\tau-z)u=0$ as in Hypothesis~\ref{hypLDrealentirefund} and Lemma~\ref{lemLDtheta} we may define a complex valued function $M$ on $\rho(S)^\times$ by requiring that the solutions
\begin{align}
 \psi_z = \theta_z + M(z)\phi_z, \quad z\in\rho(S)^\times
\end{align}
lie in $S$ near $b$, i.e.\ they lie in $\Tmax$ near $b$ and satisfy the boundary condition at $b$ if $\tau$ is in the l.c.\ case there. 
 Because of Theorem~\ref{thmLDdefind} and the fact that there is up to scalar multiples precisely one solution of $(\tau-z)u=0$ satisfying the boundary condition at $b$ if $\tau$ is in the l.c.\ case there, $M$ is well-defined and referred to as the singular Weyl--Titchmarsh function of $S$, associated with the fundamental system $\theta_z$, $\phi_z$, $z\in\C^\times$.

\begin{theorem}\label{thmLDmanal}
The singular Weyl--Titchmarsh function $M$ is analytic with
\begin{align}\label{eqnLDMconj}
 M(z) = M(z^\ast)^\ast, \quad z\in\rho(S)^\times.
\end{align}
\end{theorem}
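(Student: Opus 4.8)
The plan is to reduce the analyticity of $M$ to the analyticity of the resolvent, by observing that the solution $\psi_z$ which defines $M$ is exactly the solution $u_b$ entering the Green's function of Theorem~\ref{thmLDresolvent}. Indeed, $\phi_z$ lies in $S$ near $a$ and $\psi_z = \theta_z + M(z)\phi_z$ lies in $S$ near $b$, so with $u_a = \phi_z$ and $u_b = \psi_z$ the resolvent representation~\eqref{eqnLDres} is applicable. Moreover, since $V(\phi_z,\phi_z)=0$, the normalising Wronskian simplifies to $V(\psi_z,\phi_z) = V(\theta_z,\phi_z) = 1$, so the prefactor $1/V(u_b,u_a)$ in the Green's function is just $1$.

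First I would show that, for fixed $x,y\in(a,b)$, the map $z\mapsto G_z(x,y)$ is analytic on $\rho(S)^\times$. Choosing $g=\delta_y\in\Hab$ in~\eqref{eqnLDres} and using the reproducing property $f(x)=\spr{f}{\delta_x}$ gives $G_z(x,y)=R_z\delta_y(x)=\spr{R_z\delta_y}{\delta_x}$, which is analytic in $z$ because $z\mapsto R_z$ is analytic on the resolvent set. Inserting $\psi_z=\theta_z+M(z)\phi_z$ into the closed form of $G_z$ then yields
\begin{align*}
 G_z(x,y)+\frac{\delta_x(y)}{z} = \phi_z(y)\theta_z(x) + M(z)\,\phi_z(y)\phi_z(x),\quad y\le x.
\end{align*}
Now fix $z_0\in\rho(S)^\times$ and pick $x_0\in(a,b)$ with $\phi_{z_0}(x_0)\neq0$ (possible since $\phi_{z_0}$ is a non-trivial solution). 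Setting $y=x=x_0$ and solving for $M$ gives
\begin{align*}
 M(z)=\frac{G_z(x_0,x_0)+\delta_{x_0}(x_0)/z-\phi_z(x_0)\theta_z(x_0)}{\phi_z(x_0)^2},
\end{align*}
whose numerator is analytic near $z_0$ by Hypothesis~\ref{hypLDrealentirefund} and Lemma~\ref{lemLDtheta}, and whose denominator is analytic and non-vanishing near $z_0$ by continuity. Since $z_0\in\rho(S)^\times$ was arbitrary, $M$ is analytic on $\rho(S)^\times$.

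For the symmetry~\eqref{eqnLDMconj} I would use that $\phi_z$ and $\theta_z$ are real analytic, so that $\phi_{z^\ast}=\phi_z^\ast$ and $\theta_{z^\ast}=\theta_z^\ast$ as solutions. Taking complex conjugates, $\psi_z^\ast=\theta_z^\ast+M(z)^\ast\phi_z^\ast$ solves $(\tau-z^\ast)u=0$; since $S$ is real with respect to the natural conjugation (as is needed for the real analytic fundamental system of Hypothesis~\ref{hypLDrealentirefund}), the conjugate $\psi_z^\ast$ again lies in $\Hab$ near $b$ and, in the l.c.\ case, satisfies the real boundary condition at $b$, hence lies in $S$ near $b$. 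By the uniqueness up to scalars of the solution lying in $S$ near $b$, $\psi_z^\ast$ is a scalar multiple of $\psi_{z^\ast}=\theta_z^\ast+M(z^\ast)\phi_z^\ast$; comparing the coefficients of the linearly independent pair $\theta_z^\ast$, $\phi_z^\ast$ forces this scalar to be $1$ and therefore $M(z)^\ast=M(z^\ast)$.

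The main obstacle is the bookkeeping in the first step: one must justify that the pointwise Green's function $G_z(x,y)$ genuinely coincides with the resolvent matrix element $\spr{R_z\delta_y}{\delta_x}$, which is precisely where the reproducing kernel structure of $\Hab$ and the identity $f(x)=\spr{f}{\delta_x}$ are used (including that the diagonal value $\delta_{x_0}(x_0)$ is finite and $z$-independent), and one must arrange the local choice of $x_0$ so that the denominator does not vanish. Everything else reduces to a direct computation with the fundamental system together with the already-established reality of $\Tmin$ and of the separated boundary conditions.
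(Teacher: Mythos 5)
Your analyticity argument is essentially the paper's own: the paper likewise takes $u_a=\phi_z$, $u_b=\psi_z$ in Theorem~\ref{thmLDresolvent} (so that the normalizing factor is $V(\psi_z,\phi_z)=V(\theta_z,\phi_z)=1$) and reads off the diagonal identity $\spr{R_z\delta_c}{\delta_c}=M(z)\phi_z(c)^2+\theta_z(c)\phi_z(c)-w_a(c)w_b(c)/(zW(w_b,w_a))$, from which $M$ is solved for locally near any $z_0$ by choosing $c$ with $\phi_{z_0}(c)\neq0$. Where you genuinely diverge is the symmetry~\eqref{eqnLDMconj}. The paper gets it from the \emph{same} displayed identity at no extra cost: self-adjointness of $S$ gives $\spr{R_z\delta_c}{\delta_c}^\ast=\spr{R_{z^\ast}\delta_c}{\delta_c}$ (as $\delta_c$ is real), real analyticity gives $\phi_z(c)^\ast=\phi_{z^\ast}(c)$ and $\theta_z(c)^\ast=\theta_{z^\ast}(c)$, and comparing the conjugated identity with the identity at $z^\ast$ yields $M(z)^\ast=M(z^\ast)$. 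Your route instead conjugates the Weyl solution and invokes uniqueness of the solution lying in $S$ near $b$; this works, but it rests on the assertion that $\psi_z^\ast$ again satisfies the boundary condition at $b$, i.e.\ that this boundary condition is invariant under conjugation. That fact is true but nowhere stated in the paper (only $\Tmin$ is asserted to be real), and your parenthetical justification is circular as written, since the reality of $S$ is exactly what needs checking. The gap closes with Proposition~\ref{propLDplucker}: for $f\in\Tmax$ satisfying $V(f,v_b^\ast)(b)=0$, evaluate the Pl\"ucker identity at $b$ with the four elements $f$, $f^\ast$, $v_b$, $v_b^\ast$; using $V(v_b,v_b^\ast)(b)=0$ and $V(f,v_b^\ast)(b)=0$ it collapses to $0=V(f,v_b)(b)\,V(v_b^\ast,f^\ast)(b)=-|V(f^\ast,v_b^\ast)(b)|^2$, so $V(f^\ast,v_b^\ast)(b)=0$ and the boundary condition is indeed real. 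With that supplement your proof is complete; the paper's resolvent-identity route is simply more economical, delivering both analyticity and the conjugation symmetry from one formula.
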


\begin{proof}
 From Theorem~\ref{thmLDresolvent} 
 we get for each $c\in(a,b)$ 
 \begin{equation}\label{eqnLDresoldcdc}
  \begin{split}
   \spr{R_z \delta_c}{\delta_c} & = R_z \delta_c(c) = G_z(c,c) = \psi_z(c)\phi_z(c) - \frac{\delta_c(c)}{z} \\
           & = M(z) \phi_z(c)^2 + \theta_z(c)\phi_z(c) - \frac{w_a(c)w_b(c)}{z W(w_b,w_a)}, \quad z\in\rho(S)^\times,
  \end{split}
 \end{equation}
 which proves the claim.
\end{proof}

Similarly to the right-definite case (see e.g.\ \cite[Lemma~A.4]{kst2}, \cite[Theorem~9.4]{measureSL}), it is possible to construct a real analytic fundamental system $\theta_z$, $\phi_z$, $z\in\C^\times$ of $(\tau-z)u=0$ as in Hypothesis~\ref{hypLDrealentirefund} and Lemma~\ref{lemLDtheta}, if $\tau$ is in the l.c.\ case at $a$.

\begin{proposition}\label{propLDlcfund}
 Suppose $\tau$ is in the l.c.\ case at $a$. Then there is a real analytic fundamental system $\theta_z$, $\phi_z$, $z\in\C^\times$ of $(\tau-z)u=0$ as in Hypothesis~\ref{hypLDrealentirefund} and Lemma~\ref{lemLDtheta} such that for all $z_1$, $z_2\in\C^\times$ we additionally  have 
 \begin{align}
   V(\theta_{z_1},\phi_{z_2})(a) = 1 \quad\text{and}\quad V(\theta_{z_1},\theta_{z_2})(a) = V(\phi_{z_1},\phi_{z_2})(a) = 0.
 \end{align}
 In this case, the corresponding function $M$ is a Herglotz--Nevanlinna function.
\end{proposition}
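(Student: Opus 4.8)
The plan is to read the three boundary identities as the single statement that $\phi_z$ and $\theta_z$ carry generalized boundary values at $a$ which are \emph{constant in $z$} and form a symplectic basis for the form $V(\,\cdot\,,\cdot\,)(a)$; once such a fundamental system is in hand, the Herglotz property falls out of the Lagrange identity. Since $\tau$ is in the l.c.\ case at $a$, every solution of $(\tau-z)u=0$ lies in $\Hab$ near $a$, so by Lemma~\ref{lemLDlagrange} the limit $V(f,g)(a)$ exists for arbitrary solutions $f$, $g$; by antisymmetry and Proposition~\ref{propLDlclpwronski} it descends to a nondegenerate antisymmetric form on the two-dimensional space of boundary data at $a$. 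The decisive point is that $V(f,g)(a)$ depends only on the boundary data of $f$ and $g$ and not otherwise on the spectral parameters. I would therefore fix a symplectic pair of boundary-value functionals $\ell_1$, $\ell_2$ at $a$ and \emph{require} $\phi_z$ to have boundary data $\ell_1(\phi_z)=1$, $\ell_2(\phi_z)=0$ and $\theta_z$ to have $\ell_1(\theta_z)=0$, $\ell_2(\theta_z)=1$, with the sign of the pairing normalized so that $V(\theta_z,\phi_z)=1$. Because $V(\,\cdot\,,\cdot\,)(a)$ sees only boundary data, the cross-parameter quantities $V(\theta_{z_1},\phi_{z_2})(a)$, $V(\theta_{z_1},\theta_{z_2})(a)$, $V(\phi_{z_1},\phi_{z_2})(a)$ are then automatically independent of $z_1$, $z_2$ and equal $1$, $0$, $0$; moreover the vanishing of $V(\phi_{z_1},\phi_{z_2})(a)$ is in any case forced once both $\phi_z$ satisfy the boundary condition at $a$, since they then span a Lagrangian line. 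In particular each $\phi_z$ lies in $S$ near $a$, so Hypothesis~\ref{hypLDrealentirefund} is met.

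The \textbf{main obstacle} is constructing solutions with these prescribed constant boundary values and establishing real-analytic dependence on $z$, and here I would adapt the right-definite constructions of \cite[Lemma~A.4]{kst2} and \cite[Theorem~9.4]{measureSL}. Solutions determined by initial data at a fixed interior point $c$ depend real analytically (in fact entirely) on $z$ by the successive-approximation scheme underlying Theorem~\ref{thmLDexisuniq}. Applying the boundary-value maps $f\mapsto(\ell_1(f),\ell_2(f))$ at $a$ to these solutions yields real analytic functions of $z$ whose associated $2\times2$ transition matrix is invertible for $z\in\C^\times$; inverting to solve for the $z$-dependent initial data at $c$ that produces the prescribed boundary values then gives $\phi_z$, $\theta_z$ real analytic on $\C^\times$ with at most poles at zero, as in Lemma~\ref{lemLDtheta}. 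The step demanding the most care is showing that the boundary-value functionals are well defined and depend analytically on $z$ at the singular endpoint, and that the inversion introduces no singularities off $z=0$; this is precisely where the l.c.\ hypothesis (finiteness of $\varsigma$, $\chi$ near $a$ together with square-integrability of $\int_c^\cdot d\varrho$) enters, exactly as in the cited references.

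Finally, for the Herglotz property I would compute the boundary form of $\psi_z=\theta_z+M(z)\phi_z$. Using $\psi_z^\ast=\psi_{z^\ast}=\theta_{z^\ast}+M(z)^\ast\phi_{z^\ast}$, expanding $V(\psi_z,\psi_z^\ast)(a)$ by bilinearity, and inserting the constant boundary Wronskians (with antisymmetry giving $V(\phi_z,\theta_{z^\ast})(a)=-1$) yields $V(\psi_z,\psi_z^\ast)(a)=M(z)^\ast-M(z)$. Since $\psi_z$ and $\psi_z^\ast$ both lie in $S$ near $b$, we have $V(\psi_z,\psi_z^\ast)(b)=0$ (a Lagrangian line at $b$, or Proposition~\ref{propLDlclpwronski} in the l.p.\ case). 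Because $\tau$ is l.c.\ at $a$, every solution lies in $\Hab$ near $a$, so $\psi_z\in\Hab$ globally and the Lagrange identity~\eqref{eqnLDLagrangeSP} applies to $f=g=\psi_z$ with $(\psi_z)_\tau=z\psi_z$, giving $(z-z^\ast)\spr{\psi_z}{\psi_z}=M(z)-M(z)^\ast$, that is
\begin{align*}
 \frac{M(z)-M(z)^\ast}{z-z^\ast} = \spr{\psi_z}{\psi_z} \geq 0, \quad z\in\rho(S)^\times.
\end{align*}
Together with the analyticity and the relation $M(z)=M(z^\ast)^\ast$ from Theorem~\ref{thmLDmanal}, this identifies $M$ as a Herglotz--Nevanlinna function.
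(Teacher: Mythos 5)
Your proposal is correct and takes essentially the approach the paper intends: the paper prints no proof of this proposition, deferring instead to the right-definite constructions of \cite[Lemma~A.4]{kst2} and \cite[Theorem~9.4]{measureSL}, and your argument --- $z$-independent boundary-value functionals at $a$ obtained as limits of the modified Wronskian, the Pl\"{u}cker identity (Proposition~\ref{propLDplucker}) to show that $V(\,\cdot\,,\cdot\,)(a)$ depends only on these data, and inversion of the $2\times 2$ transition matrix whose determinant vanishes only at $z=0$ (whence the admitted poles) --- is precisely the adaptation of that construction to the left-definite setting. Your Herglotz argument, combining $V(\psi_z,\psi_z^\ast)(a)=M(z)^\ast-M(z)$ and $V(\psi_z,\psi_z^\ast)(b)=0$ with the Lagrange identity~\eqref{eqnLDLagrangeSP} to obtain $\im M(z)=\im z\,\|\psi_z\|^2\geq 0$ on $\C^+\subseteq\rho(S)^\times$, is likewise the standard one.
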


In particular, if $\tau$ is regular at $a$ and the boundary condition there is given by
\begin{align*}
 f_\tau(a)\cos\varphi_a - f^\qd(a) \sin\varphi_a = 0, \quad f\in S
\end{align*}
for some $\varphi_a\in[0,\pi)$, then a real analytic fundamental system $\theta_z$, $\phi_z$, $z\in\C^\times$ of $(\tau-z)u=0$ is given for example by the initial conditions
\begin{align*}
 z \phi_z(a) = -\theta_z^\qd(a) = \sin\varphi_a \quad\text{and}\quad \phi_z^\qd(a) = z\theta_z(a) = \cos\varphi_a, \quad z\in\C^\times.
\end{align*}
Obviously, this fundamental system satisfies the properties in Proposition~\ref{propLDlcfund}.

As in the right-definite case (see \cite[Lemma~3.2]{geszin}, \cite[Lemma~2.2]{kst2}, \cite[Theorem~9.6]{measureSL}),  we may give a necessary and sufficient condition for Hypothesis~\ref{hypLDrealentirefund} to hold.
Therefore fix some $c\in(a,b)$ such that $\chi((a,c))\not=0$ and consider the maximal relation in $H^1(a,c)$ associated with our differential expression restricted to $(a,c)$. With $S_c$ we denote the self-adjoint restriction of this relation with  the same boundary conditions as $S$ near $a$ and Dirichlet boundary conditions at $c$.  

\begin{lemma}\label{lemLDequivhyp}
 Hypothesis~\ref{hypLDrealentirefund} holds if and only if the self-adjoint relation $S_c$ has purely discrete spectrum.
\end{lemma}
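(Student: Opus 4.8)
The plan is to establish the equivalence by connecting the solvability of Hypothesis~\ref{hypLDrealentirefund} to the eigenvalue structure of the restricted relation $S_c$. The key analytic object is the fundamental solution $\phi_z$ lying in $S$ near $a$. The essential observation is that, for fixed $c\in(a,b)$ with $\chi((a,c))\neq 0$, a solution $u$ of $(\tau-z)u=0$ which lies in $S$ near $a$ and additionally vanishes at $c$ (i.e.\ satisfies the Dirichlet condition there) is precisely an eigenfunction of $S_c$ with eigenvalue $z$. Thus I expect $z\in\C^\times$ to be an eigenvalue of $S_c$ if and only if the solution lying in $S$ near $a$ satisfies $\phi_z(c)=0$, up to the distinction of the value at zero.

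First I would fix a solution $\phi_{z_0}$ lying in $S$ near $a$ for one convenient base point $z_0$ (using Theorem~\ref{thmLDdefind}, there is up to scalars exactly one such solution when $\tau$ is in the l.c.\ case at $a$, and in the l.p.\ case the condition ``lies in $\Hab$ near $a$'' again pins it down up to scalars for $z\in\C^\times$). Following the construction in \cite[Lemma~2.2]{kst2}, one builds a candidate $\phi_z$ by solving, for each $z$, the initial value problem on $(a,c)$ whose data at $c$ are read off from $\phi_{z_0}$ via an analytic family of transfer relations; the point is that $\phi_z$ can be chosen analytic in $z$ precisely when one can patch together the germs near $a$ into a globally analytic family. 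The obstruction to doing so is exactly the existence of accumulating or non-isolated $z$ at which the ``$S$ near $a$'' solution fails to extend analytically, and this failure is governed by the spectrum of $S_c$.

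For the direction that discrete spectrum of $S_c$ implies Hypothesis~\ref{hypLDrealentirefund}: if $\sigma(S_c)$ is purely discrete, then by Theorem~\ref{thmLDresolvent} the resolvent $R_z^{S_c}$ is a meromorphic operator-valued function of $z$ with poles only at the (isolated) eigenvalues, and hence $z\mapsto R_z^{S_c}\delta_\gamma(\gamma)$ is meromorphic for $\gamma\in(a,c)$. Using the explicit Green's function from Theorem~\ref{thmLDresolvent}, applied to the relation on $(a,c)$, I would extract from $G_z^{S_c}$ the solution lying in $S$ near $a$ and show that $z\mapsto\phi_z(c)$ and $z\mapsto\phi_z^\qd(c)$ inherit meromorphy; real analyticity follows from the reality of the coefficients and the conjugation symmetry as in~\eqref{eqnLDMconj}, while the only possible non-analytic point in $\C^\times$ would correspond to a non-isolated spectral point, which discreteness excludes. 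Normalizing $\phi_z$ so that its Cauchy data at $c$ have no common analytic zero then yields the representation in Hypothesis~\ref{hypLDrealentirefund} with at most a pole at $z=0$.

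For the converse, I would argue contrapositively: if $S_c$ does not have purely discrete spectrum, then $\sigma_e(S_c)\cap\C^\times\neq\emptyset$, and at a point $\lambda$ of essential spectrum the resolvent cannot be continued analytically, so no analytic family $\phi_z$ lying in $S$ near $a$ can exist in a neighborhood of $\lambda$ without contradicting the self-adjointness and the spectral theorem for $S_c$. The main obstacle I anticipate is the careful bookkeeping at $z=0$: because the weight measure $\varrho$ may vanish on sets and the whole construction of $\theta_z=z^{-1}u_z$ in Lemma~\ref{lemLDtheta} introduces a factor $z^{-1}$, zero plays a genuinely exceptional role, and one must verify throughout that the permitted pole at zero in~\eqref{eqnLDphianaly} does not interfere with the equivalence, i.e.\ that the spectral condition on $S_c$ concerns $\sigma(S_c)\setminus\{0\}$ in exactly the way the hypothesis restricts analyticity to $\C^\times$. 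Handling this left-definite singularity at the origin, rather than the transfer-matrix analyticity which is essentially the same as in the right-definite references, is where the real care is needed.
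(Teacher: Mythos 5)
Your outline follows the route the paper itself points to: the paper gives no proof of Lemma~\ref{lemLDequivhyp}, but refers to \cite[Lemma~2.2]{kst2} and \cite[Theorem~9.6]{measureSL}, and your skeleton (nonzero eigenvalues of $S_c$ are exactly the zeros of $z\mapsto\phi_z(c)$; the equivalence is mediated by meromorphy of the resolvent of $S_c$) is that argument. In the direction ``discrete spectrum implies Hypothesis~\ref{hypLDrealentirefund}'' your plan is sound, though the word ``normalizing'' hides the one real piece of work: the Green's function is a product $u_a(x)u_b(y)$ up to the Wronskian, and extracting from it a single nontrivial family, analytic in $\C^\times$ with at most a pole at zero and no parameter values where it degenerates, requires writing a meromorphic function as a quotient of real analytic functions without common zeros (a Weierstrass/Hadamard factorization step); this is exactly what \cite{kst2} does via the Weyl function of $S_c$.

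The genuine gap is in the converse direction, and it sits precisely at the point you flagged but then resolved incorrectly. Your contrapositive opens with ``$S_c$ not purely discrete implies the essential spectrum of $S_c$ meets $\C^\times$''. That implication is unjustified: a priori the essential spectrum could be exactly $\lbrace 0\rbrace$, for instance if the nonzero eigenvalues of $S_c$ accumulate only at $0$; then every point of $\sigma(S_c)\cap\C^\times$ is an isolated eigenvalue and your argument produces no contradiction, since Hypothesis~\ref{hypLDrealentirefund} asserts no analyticity at $z=0$. Contrary to your closing remark, the lemma's spectral condition does \emph{not} concern only $\sigma(S_c)\setminus\lbrace 0\rbrace$: ``purely discrete'' also forbids accumulation of eigenvalues at $0$, and on the function-theoretic side this is matched precisely by the clause ``at most poles at zero'' in~\eqref{eqnLDphianaly}. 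A function which is meromorphic at $0$ and not identically zero has no zeros accumulating at $0$, whereas a function merely analytic in $\C^\times$ may; this is where the left-definite setting genuinely differs from \cite{kst2}, not in some freedom to ignore the origin. To close the gap, either treat $\lambda=0$ separately in the contrapositive (eigenvalues accumulating at $0$ would be zeros of $z\mapsto\phi_z(c)$ accumulating at a pole or at a point of analyticity, which is impossible for a nontrivial family), or, cleaner, argue directly: under Hypothesis~\ref{hypLDrealentirefund} the Green's function of $S_c$, built from $\phi_z$ and the real entire solution with Dirichlet data at the regular endpoint $c$, is meromorphic on all of $\C$ for fixed arguments, hence the resolvent of $S_c$ is meromorphic on $\C$, hence $\sigma(S_c)$ has no finite accumulation point at all and is purely discrete.
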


This lemma can be proved along the lines of \cite[Lemma~2.2]{kst2} and \cite[Theorem~9.6]{measureSL}.
 Moreover, if Hypothesis~\ref{hypLDrealentirefund} holds at both endpoints, then it turns out
  that the spectrum of $S$ is purely discrete.
 In particular, $S$ has purely discrete spectrum provided that $\tau$ is in the l.c.\ case at both endpoints.

\section{Spectral transformation}

In this section let $S$ again be a self-adjoint restriction of $\Tmax$ with separated boundary conditions such that Hypothesis~\ref{hypLDrealentirefund} holds.
 For the sake of simplicity we will furthermore assume that zero is not an eigenvalue of $S$. 
 This excludes for example the case of Dirichlet boundary conditions at a regular endpoint or if $\varsigma$ and $\chi$ are finite near an endpoint which is in the l.p.\ case. 

Next recall that for all functions $f$, $g\in \Hab$  there is a unique complex Borel measure $E_{f,g}$ on $\R$ such that
\begin{align}\label{eqnLDstietrans}
 \spr{R_zf}{g} = \int_\R \frac{1}{\lambda-z} dE_{f,g}(\lambda), \quad z\in\rho(S).
\end{align}
 In fact, these measures are obtained by applying a variant of the spectral theorem to the operator part
of $S$ (see e.g.~\cite[Lemma~B.4]{measureSL}). 

\begin{lemma}\label{lemLDspecmeas}
There is a unique Borel measure $\mu$ on $\R^\times$ 
such that 
\begin{align}\label{eqnLDspecmeasdens}
 E_{\delta_{\alpha},\delta_{\beta}}(B) = \int_{B^\times} \phi_\lambda(\alpha)  \phi_\lambda(\beta) d\mu(\lambda)
\end{align}
for all $\alpha$, $\beta\in(a,b)$ and each Borel set $B\subseteq\R$.
\end{lemma}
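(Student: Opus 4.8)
The plan is to read the measure $\mu$ off the singular Weyl--Titchmarsh function $M$ by a Stieltjes--Perron type inversion, and then to propagate the resulting factorization to all pairs $\delta_\alpha,\delta_\beta$. First I would record, for arbitrary $\alpha,\beta\in(a,b)$, the analogue of~\eqref{eqnLDresoldcdc}. Taking in Theorem~\ref{thmLDresolvent} the solutions $u_a=\phi_z$ and $u_b=\psi_z=\theta_z+M(z)\phi_z$ (so that $V(u_b,u_a)=V(\theta_z,\phi_z)=1$) and using $\spr{R_z\delta_\alpha}{\delta_\beta}=R_z\delta_\alpha(\beta)=G_z(\alpha,\beta)$ together with the symmetry of the kernel, one obtains
\[
 \spr{R_z\delta_\alpha}{\delta_\beta}=M(z)\,\phi_z(\alpha)\phi_z(\beta)+A_{\alpha,\beta}(z),\quad z\in\rho(S)^\times,
\]
where $A_{\alpha,\beta}(z)=\phi_z(\alpha)\theta_z(\beta)-\delta_\alpha(\beta)/z$ (for $\alpha\le\beta$) is, by Hypothesis~\ref{hypLDrealentirefund} and Lemma~\ref{lemLDtheta}, analytic on $\C^\times$ with at most a pole at zero and real on $\R^\times$. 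Thus all the non-analytic behaviour across $\R^\times$ is carried by the single scalar function $M$.

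Next I would construct $\mu$. Since $\spr{R_z\delta_c}{\delta_c}$ is the Cauchy transform of the positive finite measure $E_{\delta_c,\delta_c}$ from~\eqref{eqnLDstietrans}, the Stieltjes--Perron inversion formula recovers $E_{\delta_c,\delta_c}$ from the boundary values $\im\spr{R_{\lambda+\I\eps}\delta_c}{\delta_c}$ as $\eps\downarrow0$. As $A_{c,c}$ is real-analytic with real boundary values on $\R^\times$, it contributes nothing there, and as $\phi_{\lambda+\I\eps}(c)\to\phi_\lambda(c)\in\R$ locally uniformly in $\lambda$, the inversion yields, for every interval $(\lambda_0,\lambda_1]\subset\R^\times$ whose endpoints carry no mass,
\[
 E_{\delta_c,\delta_c}\big((\lambda_0,\lambda_1]\big)=\int_{(\lambda_0,\lambda_1]}\phi_\lambda(c)^2\,d\mu(\lambda),
\]
where I define $\mu$ on $\R^\times$ through $\mu((\lambda_0,\lambda_1])=\lim_{\eps\downarrow0}\tfrac1\pi\int_{\lambda_0}^{\lambda_1}\im M(\lambda+\I\eps)\,d\lambda$ on such intervals and extend it to a Borel measure. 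Positivity and independence of the auxiliary point $c$ come for free: on $\{\phi_\lambda(c)\ne0\}$ one has $d\mu=\phi_\lambda(c)^{-2}\,dE_{\delta_c,\delta_c}\ge0$, every $\lambda\in\R^\times$ lies in such a set for a suitable $c$ (as $\phi_\lambda$ is a non-trivial solution), and on overlaps the two representations must agree.

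With $\mu$ in hand I would obtain~\eqref{eqnLDspecmeasdens} in general by the same inversion applied to the (now complex) measure $E_{\delta_\alpha,\delta_\beta}$: its Cauchy transform is $M(z)\phi_z(\alpha)\phi_z(\beta)+A_{\alpha,\beta}(z)$, the remainder $A_{\alpha,\beta}$ again contributes no mass on $\R^\times$, and $\phi_{\lambda+\I\eps}(\alpha)\phi_{\lambda+\I\eps}(\beta)\to\phi_\lambda(\alpha)\phi_\lambda(\beta)$ gives $dE_{\delta_\alpha,\delta_\beta}=\phi_\lambda(\alpha)\phi_\lambda(\beta)\,d\mu$ on $\R^\times$, which is exactly~\eqref{eqnLDspecmeasdens} once one notes that $B^\times$ discards the point $0\notin\R^\times$. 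Uniqueness is then immediate: if $\mu_1,\mu_2$ both satisfy~\eqref{eqnLDspecmeasdens}, then $\int_B\phi_\lambda(\alpha)^2\,d(\mu_1-\mu_2)=0$ for all Borel $B$ and all $\alpha$, and covering $\R^\times$ by the sets $\{\phi_\lambda(\alpha)^2>0\}$ forces $\mu_1=\mu_2$.

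The hard part will be the analytic bookkeeping in the inversion step rather than any structural idea: one must justify interchanging the limit $\eps\downarrow0$ with the factor $\phi_{\lambda+\I\eps}(\alpha)\phi_{\lambda+\I\eps}(\beta)$ (via local uniform convergence of the real analytic solutions on compact subsets of $\R^\times$), verify that a function analytic across $\R^\times$ really produces neither absolutely continuous, singular continuous, nor point mass there, and assemble the interval-wise definition of $\mu$ into a genuine Borel measure while confirming that no mass escapes to the excluded point $0$. This follows the template of the right-definite constructions in \cite{geszin}, \cite{kst2} and~\cite{measureSL}.
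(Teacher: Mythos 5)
Your proposal is correct and takes essentially the same route as the paper: the paper's proof likewise first records the decomposition $\spr{R_z\delta_\alpha}{\delta_\beta} = M(z)\,\phi_z(\alpha)\phi_z(\beta) + H_{\alpha,\beta}(z)$ with $H_{\alpha,\beta}$ real analytic on $\C^\times$ (your first step, obtained as in Theorem~\ref{thmLDmanal} from Theorem~\ref{thmLDresolvent}), and then deduces the claim by the Stieltjes--Perron inversion argument of \cite[Lemma~3.3]{kst2} --- which is exactly the construction of $\mu$ from $\im M$, the transfer to general pairs $\delta_\alpha,\delta_\beta$, and the covering/uniqueness bookkeeping that you spell out.
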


\begin{proof}
As in the proof of Theorem~\ref{thmLDmanal} one sees that for $\alpha$, $\beta\in(a,b)$
\begin{align*}
 \spr{R_z \delta_\alpha}{\delta_\beta} = M(z) \phi_z(\alpha)\phi_z(\beta) + H_{\alpha,\beta}(z), \quad z\in\rho(S)^\times,
\end{align*}
where $H_{\alpha,\beta}$ is a real analytic function on $\C^\times$. Now the claim may be deduced following the arguments in the proof of \cite[Lemma~3.3]{kst2}.
\end{proof}

Now similar to \cite[Theorem~3.4]{kst2} and \cite[Lemma~10.2]{measureSL}, we may introduce a spectral transformation for our self-adjoint linear relation $S$. 

\begin{lemma}\label{lemLDspectrans}
There is a unique bounded linear operator $\mathcal{F}:\, \Hab\rightarrow\Lrtmu$ such that for each $c\in(a,b)$ we have $\mathcal{F} \delta_c(\lambda) = \phi_\lambda(c)$ for almost all $\lambda\in\R^\times$ with respect to $\mu$. The operator $\mathcal{F}$ is a surjective partial isometry with initial subspace $\overline{\dom{S}}$. 
 Its adjoint is given by
\begin{align}
 \mathcal{F}^\ast g(x) = \int_{\R^\times} \phi_\lambda(x) g(\lambda) d\mu(\lambda), \quad x\in(a,b),~ g\in\Lrtmu,
\end{align}
its (in general multi-valued) inverse is given by
\begin{align}
 \mathcal{F}^{-1} = \lbrace (g,f)\in\Lrtmu\times\Hab \,|\, \mathcal{F}^\ast g -f\in\mul{S} \rbrace.
\end{align}
\end{lemma}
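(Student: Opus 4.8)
The plan is to build $\mathcal{F}$ on the reproducing kernels $\delta_c$, where the isometry property is visible directly from Lemma~\ref{lemLDspecmeas}, and then to read off the adjoint and the inverse; the genuine work will be the surjectivity.

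First I would note that $\linspan\{\delta_c\,|\,c\in(a,b)\}$ is dense in $\Hab$, since $\spr{f}{\delta_c}=f(c)$ shows that a function orthogonal to every $\delta_c$ vanishes identically. On this span I define $\mathcal{F}$ by the linear extension of $\mathcal{F}\delta_c=\big(\lambda\mapsto\phi_\lambda(c)\big)$. To see that this is bounded, recall that choosing $z=\I y$ and letting $y\to\infty$ in~\eqref{eqnLDstietrans} gives $E_{f,g}(\R)=\spr{Pf}{g}$, where $P$ denotes the orthogonal projection onto $\overline{\dom{S}}=(\mul{S})^\perp$. Since zero is not an eigenvalue of $S$, the measures $E_{f,g}$ carry no mass at $0$, so Lemma~\ref{lemLDspecmeas} yields, for $\alpha,\beta\in(a,b)$,
\begin{align*}
 \spr{\mathcal{F}\delta_\alpha}{\mathcal{F}\delta_\beta}_{\Lrtmu} = \int_{\R^\times} \phi_\lambda(\alpha)\phi_\lambda(\beta)\, d\mu(\lambda) = E_{\delta_\alpha,\delta_\beta}(\R) = \spr{P\delta_\alpha}{\delta_\beta},
\end{align*}
using that $\phi_\lambda(\beta)$ is real for real $\lambda$. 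Hence $\|\mathcal{F}u\|_{\Lrtmu}=\|Pu\|_{\Hab}$ on the span, so $\mathcal{F}$ extends to a bounded operator on $\Hab$ with $\|\mathcal{F}u\|=\|Pu\|$ for all $u$; this identifies $\mathcal{F}$ as a partial isometry with $\ker\mathcal{F}=\mul{S}$ and initial subspace $\overline{\dom{S}}$, and uniqueness of $\mathcal{F}$ is clear from density.

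Next I would determine the adjoint. As $\mathcal{F}^\ast$ is bounded and $\spr{\mathcal{F}^\ast g}{\delta_c}=\spr{g}{\mathcal{F}\delta_c}_{\Lrtmu}=\int_{\R^\times}\phi_\lambda(c)g(\lambda)\,d\mu(\lambda)$ (again using that $\phi_\lambda(c)$ is real), the point evaluation $\mathcal{F}^\ast g(c)=\spr{\mathcal{F}^\ast g}{\delta_c}$ produces the claimed integral formula for $\mathcal{F}^\ast g$. Granting the surjectivity established below, $\mathcal{F}^\ast$ is an isometry of $\Lrtmu$ onto $\overline{\dom{S}}$ and $\mathcal{F}\mathcal{F}^\ast$ is the identity on $\Lrtmu$; consequently $\mathcal{F}f=g$ holds if and only if $f-\mathcal{F}^\ast g\in\ker\mathcal{F}=\mul{S}$, which is exactly the asserted description of the (multi-valued) inverse $\mathcal{F}^{-1}$.

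The main obstacle is surjectivity. Since a partial isometry has closed range, it suffices to show that $K:=\ran\mathcal{F}$, the closed span of the functions $\lambda\mapsto\phi_\lambda(c)$, is dense in $\Lrtmu$. The key step is to prove, for $z\in\rho(S)^\times$ and $c\in(a,b)$, the exact identity $\mathcal{F}(R_z\delta_c)(\lambda)=\phi_\lambda(c)/(\lambda-z)$; abstractly one only obtains its $K$-component, so one must verify the full equality by inserting the explicit resolvent from Theorem~\ref{thmLDresolvent} and evaluating the resulting transform through a Lagrange-identity computation together with a limit at the singular endpoint $b$. I expect this limiting step at $b$ to be where the technical difficulties concentrate, precisely as in~\cite[Theorem~3.4]{kst2}. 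Granting it, every $\lambda\mapsto\phi_\lambda(c)/(\lambda-z)$ lies in $K$, and it remains to see that this family is total in $\Lrtmu$: if $g\in\Lrtmu$ is orthogonal to all of them, then for each $c$ the Cauchy transform $z\mapsto\int_{\R^\times}\phi_\lambda(c)g(\lambda)(\lambda-z)^{-1}\,d\mu(\lambda)$ vanishes on $\rho(S)$, so Stieltjes inversion forces $\phi_\lambda(c)g(\lambda)=0$ for $\mu$-almost every $\lambda$. Taking $c$ over a countable dense subset of $(a,b)$ and using that a non-trivial solution $\phi_\lambda$ of $(\tau-\lambda)u=0$ (with $\lambda\neq0$) cannot vanish on a dense set, we conclude $g=0$ $\mu$-almost everywhere. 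Hence $K=\Lrtmu$, and the proof is complete.
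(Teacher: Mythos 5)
Your proposal is correct and follows essentially the same route as the paper, which gives no proof of its own but delegates to \cite[Theorem~3.4]{kst2} and \cite[Lemma~10.2]{measureSL}: isometry on the span of the $\delta_c$ via Lemma~\ref{lemLDspecmeas} (including the correct observation that $E_{f,g}$ has no mass at zero since zero is not an eigenvalue), the adjoint and the multi-valued inverse read off from the partial-isometry structure, and surjectivity via the resolvent-transform identity together with Stieltjes inversion and the left-continuity argument ruling out solutions vanishing on a dense set. The single step you defer --- the exact identity $\mathcal{F}(R_z\delta_c)(\lambda)=\phi_\lambda(c)/(\lambda-z)$, where, as you rightly note, abstract reasoning only yields the projection onto $\ran\mathcal{F}$ --- is precisely the Lagrange-identity computation with a limit at the singular endpoint that the cited references carry out, so nothing in your plan would fail.
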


If $\tau$ is in the l.c.\ case at $b$, then the transform of a function $f\in\Hab$ is  
\begin{align*}
 \mathcal{F} f (\lambda) = \spr{\phi_\lambda}{f^\ast} = \int_a^b \phi_\lambda(x) f(x)d\chi(x) + \int_a^b \phi_\lambda^\qd(x) f^\qd(x)d\varsigma(x), \quad \lambda\in\R^\times
\end{align*}
by continuity. 
Of course, if $\tau$ is in the l.p.\ case at $b$, then this is not possible since $\phi_\lambda$ does not lie in $\Hab$ unless $\lambda$ is an eigenvalue.  However, we still have the following general result.

\begin{proposition}\label{propLDtransformint}
 If $f\in\Hab$ vanishes near $b$, then
 \begin{align}
  \mathcal{F} f(\lambda) = \int_a^b \phi_\lambda(x)f(x) d\chi(x) + \int_a^b \phi_\lambda^\qd(x)f^\qd(x) d\varsigma(x) 
 \end{align}
 for almost all $\lambda\in\R^\times$ with respect to $\mu$.
\end{proposition}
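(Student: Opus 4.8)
The plan is to test the transform against the reproducing kernels $\delta_c$ and to read off the spectral measure, precisely as in the proof of Lemma~\ref{lemLDspecmeas}. Write
\[
 \tilde F(z) = \int_a^b \phi_z(x)f(x)\,d\chi(x) + \int_a^b \phi_z^\qd(x)f^\qd(x)\,d\varsigma(x) = \spr{\phi_z}{f^\ast},
\]
where $f$ vanishes on some $(c_0,b)$. Since $\phi_z$ lies in $\Hab$ near $a$ and $f$ has compact support in $(a,c_0]$, these integrals converge by the Cauchy--Schwarz inequality on $(a,c_0]$, and by Hypothesis~\ref{hypLDrealentirefund} the map $z\mapsto\tilde F(z)$ is real analytic on $\C^\times$ with at most a pole at zero. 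The evaluation of the proposition at a real $\lambda$ is exactly $\tilde F(\lambda)$, so the assertion is that $\mathcal{F}f=\tilde F$ $\mu$-almost everywhere on $\R^\times$.

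First I would compute $\spr{R_zf}{\delta_c}$ for $c\in(c_0,b)$ and $z\in\rho(S)^\times$. Using $R_zf(x)=\spr{f}{G_z(x,\cdot\,)^\ast}$ from Theorem~\ref{thmLDresolvent} and the fact that $f$ and $f^\qd$ vanish outside $(a,c_0]$, only the branch $y\le x=c$ of the kernel contributes; choosing $u_a=\phi_z$, $u_b=\psi_z$, so that $V(\psi_z,\phi_z)=V(\theta_z,\phi_z)=1$, this yields
\[
 \spr{R_zf}{\delta_c} = R_zf(c) = \psi_z(c)\,\tilde F(z) - \frac{1}{z}\spr{f}{\delta_c} = \psi_z(c)\,\tilde F(z),
\]
the last equality because $\spr{f}{\delta_c}=f(c)=0$ for $c>c_0$. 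Inserting $\psi_z=\theta_z+M(z)\phi_z$ gives $\spr{R_zf}{\delta_c}=M(z)\phi_z(c)\tilde F(z)+\theta_z(c)\tilde F(z)$, in which the second summand is real analytic on $\C^\times$.

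Comparing this with the Stieltjes representation~\eqref{eqnLDstietrans} and running the Stieltjes inversion argument from the proof of Lemma~\ref{lemLDspecmeas} (the singular part of $\spr{R_zf}{\delta_c}$ on $\R^\times$ is governed by $M$, whose boundary behaviour defines $\mu$), I obtain $dE_{f,\delta_c}(\lambda)=\tilde F(\lambda)\phi_\lambda(c)\,d\mu(\lambda)$ on $\R^\times$. On the other hand, since $\mathcal{F}$ diagonalises $S$ by Lemma~\ref{lemLDspectrans} and zero is not an eigenvalue, one also has $dE_{f,\delta_c}(\lambda)=\mathcal{F}f(\lambda)\,\mathcal{F}\delta_c(\lambda)^\ast\,d\mu(\lambda)=\mathcal{F}f(\lambda)\phi_\lambda(c)\,d\mu(\lambda)$ on $\R^\times$. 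Hence $\bigl(\mathcal{F}f(\lambda)-\tilde F(\lambda)\bigr)\phi_\lambda(c)=0$ for $\mu$-almost every $\lambda$, for each $c\in(c_0,b)$.

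Finally I would cancel the factor $\phi_\lambda(c)$. By Fubini applied to $\mu\times$ Lebesgue measure on $(c_0,b)$, for $\mu$-almost every $\lambda$ with $\mathcal{F}f(\lambda)\neq\tilde F(\lambda)$ one has $\phi_\lambda(c)=0$ for Lebesgue-almost every $c\in(c_0,b)$; but $\phi_\lambda$ is left-continuous, so vanishing almost everywhere on $(c_0,b)$ forces $\phi_\lambda\equiv0$ there and thus, by Theorem~\ref{thmLDexisuniq}, everywhere, contradicting that $\phi_\lambda$ is non-trivial. Therefore $\mathcal{F}f=\tilde F$ $\mu$-almost everywhere, which is the claim. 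I expect the main work to lie in the kernel computation of the second paragraph, specifically isolating the $\delta_c/z$ term correctly, together with the invocation of the Stieltjes inversion mechanism of Lemma~\ref{lemLDspecmeas}; the final cancellation is routine.
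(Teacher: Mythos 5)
Your route is genuinely different from the paper's: the paper never touches the resolvent or Stieltjes inversion, but instead approximates $f$ in $\Hab$ by finite linear combinations $f_k$ of the kernels $\delta_c$ with $c\leq\beta$ (using that their span is dense in the closed subspace of functions equal to a multiple of $w_b$ on $[\beta,b)$), establishes an explicit integration-by-parts identity for $\int_a^x\phi_\lambda\delta_c\,d\chi+\int_a^x\phi_\lambda^\qd\delta_c^\qd\,d\varsigma$, and passes to the limit using only the boundedness of $\mathcal{F}$ on $\Hab$. Within your scheme, the pieces I can fully verify are sound: the Green's function computation giving $\spr{R_zf}{\delta_c}=\psi_z(c)\tilde{F}(z)$ for $c$ to the right of the support of $f$ is correct (the $\delta_c/z$ term indeed drops out because $f(c)=0$, and $V(\psi_z,\phi_z)=1$); the identity $dE_{f,\delta_c}=\mathcal{F}f\,\phi_\cdot(c)\,d\mu$ on $\R^\times$ is a legitimate use of Lemma~\ref{lemLDspectrans} and Theorem~\ref{thmLDspecthm}; and the final Fubini cancellation works, since $\mu$-a.e.\ vanishing of $(\mathcal{F}f-\tilde{F})\phi_\cdot(c)$ for a.e.\ $c$, together with left-continuity of $\phi_\lambda$ and Theorem~\ref{thmLDexisuniq}, forces $\mathcal{F}f=\tilde{F}$ $\mu$-a.e.

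There is, however, a genuine gap at the very first step, and it propagates into your inversion argument. You assert that $z\mapsto\tilde{F}(z)$ is real analytic on $\C^\times$ ``by Hypothesis~\ref{hypLDrealentirefund}''. That hypothesis only gives analyticity of $z\mapsto\phi_z(c)$ and $z\mapsto\phi_z^\qd(c)$ for each \emph{fixed} $c$, whereas $\tilde{F}$ integrates $\phi_z$, $\phi_z^\qd$ against $f\,d\chi$, $f^\qd d\varsigma$ over a region reaching the possibly singular endpoint $a$ ($f$ vanishes near $b$ but need not have compact support in $(a,b)$). Pointwise analyticity of the integrand does not yield analyticity of the integral: one needs bounds on $\int_a^\beta|\phi_z|^2d\chi+\int_a^\beta|\phi_z^\qd|^2d\varsigma$ that are locally uniform in $z$. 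Such bounds can be proved — the Lagrange identity gives, for $\im z\neq 0$, that this quantity equals $\im\bigl(z\,\phi_z(\beta)\phi_z^\qd(\beta)^\ast\bigr)/\im z$, where the boundary term $V(\phi_z,\phi_{z^\ast})(a)$ vanishes because both solutions lie in $S$ near $a$ (Proposition~\ref{propLDlclpwronski} in the l.p.\ case, the Pl\"ucker identity in the l.c.\ case), and one must then argue that this expression stays locally bounded as $z$ approaches $\R^\times$ — but this is real work of exactly the kind the paper carries out in Section~\ref{secLDdB} under a \emph{stronger} hypothesis, and it is not optional for you: the analyticity of $\tilde{F}$ on $\C^\times$ and its reality on $\R^\times$ are precisely what allow you to treat $\theta_z(c)\tilde{F}(z)$ as a harmless remainder and to pull the factor $\phi_\lambda(c)\tilde{F}(\lambda)$ out of the Stieltjes inversion when you ``run the argument of Lemma~\ref{lemLDspecmeas}''. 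Without it, the identification $dE_{f,\delta_c}=\tilde{F}\,\phi_\cdot(c)\,d\mu$ is unjustified. The paper's approximation argument sidesteps this entirely, needing only pointwise (fixed real $\lambda$) finiteness of the integrals and $L^2$-continuity of $\mathcal{F}$, which is the main economy of its approach over yours.
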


\begin{proof}
First of all, an integration by parts shows that for $\lambda\in\R^\times$ and $c\in(a,b)$
\begin{align}\tag{$*$}\label{eqnLDdeltacutoff}
 \int_a^x \phi_\lambda \delta_c\, d\chi  + \int_a^x \phi_\lambda^\qd \delta_c^\qd d\varsigma = \phi_\lambda(c) + \frac{w_a(c)w_b^\qd(x)}{W(w_b,w_a)} \phi_\lambda(x), \quad x\in(c,b).
\end{align}
Now pick some $\beta\in(a,b)$ such that $f$ vanishes on $[\beta,b)$ and consider the space $H_\beta$ of functions in $\Hab$ which are equal to a scalar multiple of $w_b$ on $[\beta,b)$. 
It is not hard to see that this space is closed and that it contains all functions $\delta_c$, $c\leq\beta$. Moreover, the linear span of these functions is even dense in $H_\beta$, i.e.\ $f$ lies in the closure of $\linspan\lbrace \delta_c \,|\, c\leq\beta\rbrace$. 
Now for each $k\in\N$ let $N(k)\in\N$ and $a_n^{k}\in\C$, $c_n^{k}\in(a,\beta)$ for $n=1,\ldots N(k)$ such that the functions
\begin{align*}
 f_k(x) = \sum_{n=1}^{N(k)} a_{n}^{k} \delta_{c_n^{k}}(x), \quad x\in(a,b),~k\in\N
\end{align*}
converge to $f$ in $\Hab$ as $k\rightarrow\infty$.
Using equation~\eqref{eqnLDdeltacutoff} we may estimate 
\begin{align*}
 & \left| \int_a^b \phi_\lambda(x)f(x)d\chi(x) + \int_a^b \phi_\lambda^\qd(x) f^\qd(x)d\varsigma(x) - \sum_{n=1}^{N(k)} a_n^{k} \phi_\lambda(c_n^{k}) \right| \\
  & \quad\quad \leq \left| \int_a^\beta \phi_\lambda (f-f_k) d\chi + \int_a^\beta \phi_\lambda^\qd (f^\qd-f_k^\qd) d\varsigma\right| + \left| \phi_\lambda(\beta) \frac{w_b^\qd(\beta)}{w_b(\beta)} f_k(\beta) \right|
\end{align*}
 for each $\lambda\in\R^\times$. 
The first term converges to zero since $f_k$ converges to $f$ in $\Hab$ as $k\rightarrow\infty$. Moreover, the second term converges to zero since $f_k(\beta)$ converges to zero as $k\rightarrow\infty$.
But this proves the claim since $\mathcal{F} f_k(\lambda)$ converges to $\mathcal{F} f(\lambda)$ for almost all $\lambda\in\R^\times$ with respect to $\mu$.
\end{proof}

If $F$ is a Borel measurable function on $\R^\times$, then we denote with $\M_F$ the maximally defined operator of multiplication with $F$ in $\Lrtmu$. 
We are now ready to state the main theorem of this section, which may be proved similarly to \cite[Theorem~10.3]{measureSL}.

\begin{theorem}\label{thmLDspecthm}
The self-adjoint relation $S$ is given by $S=\mathcal{F}^{-1} \M_{\mathrm{id}} \mathcal{F}$.
\end{theorem}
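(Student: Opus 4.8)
The plan is to show that $\mathcal{F}^{-1}\M_{\mathrm{id}}\mathcal{F}$ coincides with $S$ by verifying inclusion in both directions, using the spectral transformation $\mathcal{F}$ from Lemma~\ref{lemLDspectrans} as an intertwining map between $S$ and the multiplication relation. The essential input is already packaged in Lemma~\ref{lemLDspectrans}: $\mathcal{F}$ is a surjective partial isometry with initial subspace $\overline{\dom{S}}$, its adjoint $\mathcal{F}^\ast$ is given by the explicit integral against $\phi_\lambda$, and its (multi-valued) inverse is described exactly. The strategy is that these facts, together with the resolvent formula~\eqref{eqnLDres} and the defining relation~\eqref{eqnLDstietrans} for the measures $E_{f,g}$, pin down the action of $S$ in the transformed picture completely.

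\emph{First} I would establish the key intertwining identity, namely that $\mathcal{F}$ maps the resolvent $R_z$ of $S$ into multiplication by $(\lambda-z)^{-1}$ on $\Lrtmu$. Concretely, for $f\in\Hab$ and $z\in\rho(S)$ one wants
\begin{align*}
 \mathcal{F}(R_z f)(\lambda) = \frac{1}{\lambda-z}\,\mathcal{F}f(\lambda)
\end{align*}
for almost every $\lambda$ with respect to $\mu$. To see this, test against the generating functions $\delta_c$: by Lemma~\ref{lemLDspecmeas} and the definition~\eqref{eqnLDstietrans} of $E_{\delta_\alpha,\delta_\beta}$, the inner product $\spr{R_z\delta_\alpha}{\delta_\beta}$ equals $\int_\R (\lambda-z)^{-1}\phi_\lambda(\alpha)\phi_\lambda(\beta)\,d\mu(\lambda)$, which in the transformed picture is exactly $\spr{(\lambda-z)^{-1}\mathcal{F}\delta_\alpha}{\mathcal{F}\delta_\beta}_{\Lrtmu}$ since $\mathcal{F}\delta_c(\lambda)=\phi_\lambda(c)$. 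Because the $\delta_c$ span a dense subset of $\overline{\dom{S}}$ and $\mathcal{F}$ annihilates the orthogonal complement $\mul{S}=\ker\mathcal{F}$, this extends by boundedness and linearity to all of $\Hab$.

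\emph{Next}, from the intertwining identity one reads off both inclusions. If $(f,g)\in S$, then writing $h=(S-z)^{-1}$-type relations (equivalently, $g-zf$ lies in the range appropriately and $f=R_z(g-zf)$ modulo $\mul S$), the identity above yields $\mathcal{F}f = (\lambda-z)^{-1}(\mathcal{F}g - z\mathcal{F}f)$, hence $\lambda\,\mathcal{F}f(\lambda)=\mathcal{F}g(\lambda)$ $\mu$-a.e., which is precisely $(\mathcal{F}f,\mathcal{F}g)\in\M_{\mathrm{id}}$; thus $S\subseteq\mathcal{F}^{-1}\M_{\mathrm{id}}\mathcal{F}$. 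For the reverse inclusion, since $S$ is self-adjoint and $\M_{\mathrm{id}}$ is self-adjoint in $\Lrtmu$, while $\mathcal{F}^{-1}\M_{\mathrm{id}}\mathcal{F}$ is a symmetric extension of $S$ (using that $\mathcal{F}$ is a partial isometry onto $\Lrtmu$ with the stated initial space), the maximality of self-adjoint relations forces equality. One must handle the multi-valued part carefully here, exploiting the description $\mathcal{F}^{-1}=\{(g,f)\mid\mathcal{F}^\ast g-f\in\mul S\}$ so that the constraint modulo $\mul S$ is respected on both sides.

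\emph{The main obstacle} I expect is the bookkeeping around the multi-valued parts and the removed point $\lambda=0$: since we work with relations rather than operators and the measure $\mu$ lives on $\R^\times$, the intertwining must be phrased modulo $\mul S=\ker\mathcal{F}$ and the identity~\eqref{eqnLDres} contains the singular $\delta_x(y)/z$ correction term that only cancels correctly away from zero. Verifying that the symmetric extension $\mathcal{F}^{-1}\M_{\mathrm{id}}\mathcal{F}$ does not strictly enlarge $S$ — i.e.\ that no spurious pairs are introduced by the inverse relation — is the delicate point; this is where the surjectivity of $\mathcal{F}$ and the precise identification of its initial subspace with $\overline{\dom{S}}$ from Lemma~\ref{lemLDspectrans} must be invoked, exactly as in the corresponding argument of \cite[Theorem~10.3]{measureSL}.
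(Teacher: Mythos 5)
Your first two steps are sound and follow what is essentially the intended route (the paper gives no proof of its own but defers to \cite[Theorem~10.3]{measureSL}, which argues exactly this way: establish the intertwining $\mathcal{F}R_z=\M_{(\lambda-z)^{-1}}\mathcal{F}$ by testing against the kernels $\delta_c$, then translate $(f,g)\in S$ into $f=R_z(g-zf)$). The genuine gap is in your reverse inclusion. The relation $T=\mathcal{F}^{-1}\M_{\mathrm{id}}\mathcal{F}$, with the composition of relations taken literally, is \emph{not} a symmetric extension of $S$ whenever $\mul{S}\neq\lbrace 0\rbrace$, and that is the typical situation here: the whole reason this paper works with relations is that $\mul{S}$ is nontrivial when $\varrho$ vanishes on large sets. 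Concretely, $\ker\mathcal{F}=\overline{\dom{S}}^{\,\perp}=\mul{S}$, so for arbitrary $h,g\in\mul{S}$ one has $\mathcal{F}h=0$, $(0,0)\in\M_{\mathrm{id}}$, and $(0,g)\in\mathcal{F}^{-1}$ because $\mathcal{F}^\ast 0-g=-g\in\mul{S}$; hence $\mul{S}\times\mul{S}\subseteq T$. Taking $h\neq 0$, the pairs $(h,h)$ and $(h,0)$ both lie in $T$, and $\spr{h}{h}\neq\spr{h}{0}$, so $T$ is not symmetric; moreover $(h,0)\notin S$ because $\dom{S}\perp\mul{S}$. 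In fact $T=S+(\mul{S}\times\lbrace 0\rbrace)$ is a proper, non-symmetric extension of $S$, so no appeal to maximality of self-adjoint relations can close the argument. This is precisely the ``spurious pairs'' problem that you yourself flag as the delicate point; asserting that $T$ is symmetric does not dispose of it.

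The repair is to read the identity with the rightmost factor restricted to the initial subspace $\D=\overline{\dom{S}}$ — which is exactly how the paper later uses the theorem, writing $S_2=\mathcal{F}_2^{-1}\M_{\mathrm{id}}\mathcal{F}_2|_{\D_2}$ in the proof of Theorem~\ref{thmLDuniq}. For $T'=\mathcal{F}^{-1}\M_{\mathrm{id}}\mathcal{F}|_{\D}$ your scheme then does work, in either of two forms. (i) $T'$ \emph{is} symmetric: any $(f_j,g_j)\in T'$ has $g_j=\mathcal{F}^\ast\M_{\mathrm{id}}\mathcal{F}f_j+h_j$ with $h_j\in\mul{S}$, and since $f_j\in\D\perp\mul{S}$ the terms $h_j$ pair to zero, leaving $\spr{g_1}{f_2}=\spr{\M_{\mathrm{id}}\mathcal{F}f_1}{\mathcal{F}f_2}_{\mu}=\spr{\mathcal{F}f_1}{\M_{\mathrm{id}}\mathcal{F}f_2}_{\mu}=\spr{f_1}{g_2}$; together with $S\subseteq T'$ (your forward inclusion, noting $\dom{S}\subseteq\D$) maximality now applies. (ii) Or prove $T'\subseteq S$ directly: for $f\in\D$ with $\M_{\mathrm{id}}\mathcal{F}f\in\Lrtmu$ and $g=\mathcal{F}^\ast\M_{\mathrm{id}}\mathcal{F}f+h$, one has $\mathcal{F}g=\M_{\mathrm{id}}\mathcal{F}f$, hence $\mathcal{F}(g-zf)=(\lambda-z)\mathcal{F}f$, and the intertwining gives $\mathcal{F}R_z(g-zf)=\mathcal{F}f$; since $R_z(g-zf)\in\dom{S}\subseteq\D$ and $f\in\D$, where $\mathcal{F}$ is isometric and hence injective, $f=R_z(g-zf)$, i.e.\ $(f,g)\in S$. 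Either way the partial-isometry structure must be exploited on the domain side, not merely through the formula for $\mathcal{F}^{-1}$. (A minor further inaccuracy: the $\delta_c$, $c\in(a,b)$, span a dense subspace of all of $\Hab$, not of $\overline{\dom{S}}$; only those with $c\in\Sigma$ span $\D$ by Lemma~\ref{lemLDdomsSigma}. This does not affect your first step.)
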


Note that all of the multi-valuedness of $S$ is only contained in the inverse of our spectral transformation.
Moreover, the self-adjoint operator part of $S$ is unitarily equivalent to the operator of multiplication $\M_{\mathrm{id}}$ in $\Lrtmu$. 
In fact, $\mathcal{F}$ is unitary as an operator from $\overline{\dom{S}}$ onto $\Lrtmu$ and maps the operator part of $S$ onto multiplication with the independent variable. 
Now the spectrum of $S$ can be read off from the boundary behavior of the singular Weyl--Titchmarsh function $M$ in the usual way (see \cite[Corollary~3.5]{kst2})
\begin{align}
 \sigma(S)^\times  = \supp(\mu) = \overline{\lbrace \lambda\in\R^\times \,|\, 0<\limsup_{\varepsilon\downarrow 0} \im\,M(\lambda+\I\varepsilon)\, \rbrace}.
\end{align}
Moreover, the point spectrum of $S$ is given by
\begin{align}
  \sigma_p(S)  = \lbrace \lambda\in\R^\times \,|\, \lim_{\varepsilon\downarrow 0} \varepsilon\, \im\, M(\lambda+\I\varepsilon)>0 \rbrace,
\end{align}
 with $\mu(\lbrace\lambda\rbrace) = \|\phi_\lambda\|^{-2}$ for all eigenvalues $\lambda\in\sigma_p(S)$.

Finally, note that the measure $\mu$ is uniquely determined by the property that the mapping $\delta_c\mapsto \phi_{\lambda}(c)$, $c\in(a,b)$ uniquely extends to a partial isometry onto $\Lrtmu$, which maps $S$ onto multiplication with the independent variable.
Because of this, the measure $\mu$ is referred to as the spectral measure of $S$ associated with the real analytic solutions $\phi_z$, $z\in\C^\times$.

\section{Associated de Branges spaces}\label{secLDdB}

As in the previous sections let $S$ be some self-adjoint restriction of $\Tmax$ (with separated boundary conditions) which does not have zero as an eigenvalue.
The aim of the present section is to describe the spaces of transforms of functions in $\Hab$ with compact support.
It will turn out that these spaces are hyperplanes in some de Branges spaces associated with our left-definite Sturm--Liouville problem, at least if we somewhat strengthen Hypothesis~\ref{hypLDrealentirefund}. In fact, in this section we will assume that for each $z\in\C$ there is a non-trivial solution $\phi_z$ of $(\tau-z)u=0$ such that $\phi_z$ lies in $S$ near $a$ and the functions
\begin{align*}
 z\mapsto\phi_z(c) \quad\text{and}\quad z\mapsto\phi_z^\qd(c)
\end{align*} 
are real entire for each $c\in(a,b)$. In particular, note that the solution $\phi_0$ is always a scalar multiple of the solution $w_a$ (due to the assumption that zero is not an eigenvalue of $S$). 
For example, if $\tau$ is regular at $a$ and the boundary condition at $a$ is given by~\eqref{eqnLDbcreg} for some $\varphi_a\in(0,\pi)$, then such a real entire solution $\phi_z$, $z\in\C$ of $(\tau-z)u=0$ is given by the initial conditions
\begin{align*}
 \phi_z(a) = \sin\varphi_a \quad \text{and}\quad \phi_z^\qd(a) = z\cos\varphi_a, \quad z\in\C.
\end{align*}
Furthermore, we will assume that the measure $\varsigma$ is absolutely continuous with respect to the Lebesgue measure. This will guarantee that our chain of de Branges spaces is continuous in some sense, which simplifies the discussion to some extend. However, we do not have to impose additional assumptions on the measures $\chi$ and $\varrho$.
 
First of all we will introduce the de Branges spaces associated with $S$ and our real entire solution $\phi_z$, $z\in\C$. 
For a brief review of de Branges' theory of Hilbert spaces of entire functions see Appendix~\ref{appLDbranges}, whereas for a detailed account we refer to de Branges' book~\cite{dBbook}. Now fix some $c\in(a,b)$ and consider the entire function
\begin{align}\label{eqnLDdeBrangesE}
E(z,c) = z\phi_z(c) + \I\phi_z^\qd(c), \quad z\in\C.
\end{align}
Then this function is a de Branges function, i.e.\ it satisfies 
\begin{align*}
|E(z,c)|>|E(z^\ast,c)|,  \quad z\in\C^+,
\end{align*}
 where $\C^+$ is the open upper complex half-plane.
Indeed, a simple calculation, using the Lagrange identity from Proposition~\ref{propLDLagrange} shows that
\begin{align*}
\frac{E(z,c)E^\#(\zeta^\ast,c)-E(\zeta^\ast,c)E^\#(z,c) }{2\I(\zeta^\ast-z)} & = \frac{\zeta^\ast \phi_\zeta(c)^\ast \phi_z^\qd(c) - z\phi_z(c) \phi_\zeta^\qd(c)^\ast }{\zeta^\ast - z} \\
 & = \int_a^c \phi_\zeta^\ast \phi_z\, d\chi + \int_a^c \phi_\zeta^{\qd\ast} \phi_z^\qd d\varsigma
\end{align*}
for each $\zeta$, $z\in\C^+$. In particular, choosing $\zeta=z$ this equality shows that our function $E(\,\cdot\,,c)$ is a de Branges function.
Hence it gives rise to a de Branges space $B(c)$ equipped with the inner product
\begin{align*}
 \dbspr{F}{G}_{B(c)} = \frac{1}{\pi}\int_\R \frac{F(\lambda)G(\lambda)^\ast}{|E(\lambda,c)|^2} d\lambda, \quad F,\,G\in B(c).  
\end{align*} 
Moreover, note that $E(\,\cdot\,,c)$ does not have any real zeros $\lambda$. Indeed, if $\lambda\not=0$ this would mean that both, $\phi_\lambda$ and its quasi-derivative vanish in $c$ and if $\lambda=0$ this would contradict the fact that $\phi_0$ is a scalar multiple of $w_a$.

The reproducing kernel $K(\,\cdot\,,\cdot\,,c)$ of the de Branges space $B(c)$ is given as in equation~\eqref{eqnLDdBrepker}. A similar calculation as above, using the Lagrange identity shows that it may be written as  
\begin{align}\label{eqnLDrepkernel}
 K(\zeta,z,c) & = \int_a^c \phi_\zeta(x)^\ast \phi_z(x)d\chi(x) + \int_a^c \phi_\zeta^\qd(x)^\ast \phi_z^\qd(x)d\varsigma(x), \quad \zeta,\, z\in\C.
\end{align}
In the following, the function $K(0,\cdot\,,c)$ will be of particular interest. An integration by parts shows that this function may as well be written as
\begin{align}\label{eqnLDdBkernelzero}
 K(0,z,c) = \phi_0^\qd(c) \phi_z(c), \quad z\in\C,
\end{align}
where the boundary term at $a$ vanishes since $\phi_0$ is a scalar multiple of $w_a$.

We want to link the de Branges space $B(c)$ to our generalized Fourier transform $\mathcal{F}$, using Proposition~\ref{propLDtransformint}.
Therefore consider the modified Sobolev space $H^1(a,c)$ and define the transform of a function $f\in H^1(a,c)$ as 
\begin{align}\label{eqnLDdBtrans}
 \hat{f}(z) = \int_a^c \phi_z(x)f(x)d\chi(x) + \int_a^c \phi_z^\qd(x)f^\qd(x)d\varsigma(x), \quad z\in\C.
\end{align}
We will now identify the de Branges space $B(c)$ with the space of transforms of functions from the subspace
\begin{align*}
 D(c) = \overline{\linspan\lbrace \phi_z|_{(a,c)} \,|\, z\in\C \rbrace}
\end{align*}
 of $H^1(a,c)$, equipped with the norm inherited from $H^1(a,c)$. 
 
\begin{theorem}\label{thmLDdBspaces}
The transformation $f\mapsto\hat{f}$ is a partial isometry from the modified Sobolev space $H^1(a,c)$ onto $B(c)$ with initial subspace $D(c)$.
\end{theorem}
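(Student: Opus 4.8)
The plan is to recognise the transform as an inner product against the fundamental solution and then to match it term by term with the reproducing kernel of $B(c)$. First I would record the key identity
\begin{align*}
 \hat{f}(z) = \spr{f}{\phi_{z^\ast}}, \quad z\in\C,
\end{align*}
where the inner product is that of $H^1(a,c)$. This is immediate from~\eqref{eqnLDdBtrans} together with the fact that $\phi_z$ is real entire, so that $\phi_{z^\ast}=\phi_z^\ast$, and it makes sense because $\phi_z$ lies in $S$ near $a$ and hence $\phi_z|_{(a,c)}\in H^1(a,c)$. In particular $z\mapsto\hat{f}(z)$ is entire and, for fixed $z$, depends continuously on $f$.

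The heart of the argument is the computation of the transform on the spanning vectors $\phi_w|_{(a,c)}$. Comparing~\eqref{eqnLDdBtrans} with the kernel formula~\eqref{eqnLDrepkernel} and using $\phi_w^\ast=\phi_{w^\ast}$, I would show
\begin{align*}
 \widehat{\phi_w}(z) = K(w^\ast,z,c), \quad z\in\C,
\end{align*}
so that every transform of a spanning vector is a reproducing kernel of $B(c)$ and hence lies in $B(c)$. Feeding this into the reproducing property $K(\eta,\zeta,c)=\dbspr{K(\eta,\cdot\,,c)}{K(\zeta,\cdot\,,c)}_{B(c)}$ gives
\begin{align*}
 \dbspr{\widehat{\phi_w}}{\widehat{\phi_v}}_{B(c)} = K(w^\ast,v^\ast,c) = \spr{\phi_w}{\phi_v},
\end{align*}
where the last equality is again just~\eqref{eqnLDrepkernel} rewritten using real-entireness. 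Thus $f\mapsto\hat{f}$ preserves inner products on $\linspan\lbrace\phi_w|_{(a,c)} \,|\, w\in\C\rbrace$; and since point evaluations in the reproducing kernel space $B(c)$ are continuous while $\spr{\,\cdot\,}{\phi_{z^\ast}}$ is continuous on $H^1(a,c)$, the pointwise limit matches the $B(c)$-limit, so the isometry extends consistently to all of $D(c)$.

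It then remains to identify the initial and final subspaces. For surjectivity I would use that the reproducing kernels $K(\zeta,\cdot\,,c)$, $\zeta\in\C$, span a dense subspace of $B(c)$; since $w\mapsto w^\ast$ is a bijection of $\C$, all of them arise as transforms $\widehat{\phi_w}$, and the image of the complete space $D(c)$ under an isometry is closed, so, containing this dense set, it is all of $B(c)$. For the initial subspace, if $f\in D(c)^\perp$ then $\spr{f}{\phi_w}=0$ for every $w\in\C$, whence $\hat{f}(z)=\spr{f}{\phi_{z^\ast}}=0$ for all $z$, i.e.\ $\hat{f}\equiv 0$. Together with the isometry on $D(c)$ this is precisely the assertion that $f\mapsto\hat{f}$ is a partial isometry onto $B(c)$ with initial subspace $D(c)$. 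The main obstacle I expect is the careful bookkeeping of the complex conjugations needed to match $\spr{\phi_w}{\phi_v}$ with $\dbspr{\widehat{\phi_w}}{\widehat{\phi_v}}_{B(c)}$, together with the density-plus-closedness argument for surjectivity and the verification that the pointwise integral formula for $\hat{f}$ genuinely represents the norm-theoretic extension on $D(c)$.
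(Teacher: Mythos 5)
Your proposal is correct and follows essentially the same route as the paper's proof: compute the transforms of the spanning solutions to be reproducing kernels via \eqref{eqnLDrepkernel}, verify that inner products are preserved using the reproducing property, extend to $D(c)$ by density (identifying the abstract isometric extension with the concrete integral formula through continuity of point evaluations on both sides), obtain surjectivity from density of the kernels $K(\zeta,\cdot\,,c)$ in $B(c)$, and observe that transforms of functions orthogonal to $D(c)$ vanish. The only cosmetic difference is that you parametrize the spanning set by $\phi_w|_{(a,c)}$ and absorb the conjugation into $w\mapsto w^\ast$, whereas the paper works with $f_\zeta=\phi_\zeta^\ast|_{(a,c)}$; by real-entireness of $\phi_z$ these spans coincide.
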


\begin{proof}
 For all $\zeta\in\C$, the transform of the function $f_\zeta=\phi_\zeta^\ast|_{(a,c)}$ is given by
 \begin{align*}
  \hat{f}_\zeta(z) = \int_a^c \phi_\zeta(x)^\ast \phi_z(x) d\chi(x) + \int_a^c \phi_\zeta^\qd(x)^\ast \phi_z^\qd(x) d\varsigma(x) = K(\zeta,z,c), \quad z\in\C
 \end{align*}
 and hence lies in the de Branges space $B(c)$. Moreover, for some given $\zeta_1$, $\zeta_2\in\C$ we have
 \begin{align*}
  \spr{f_{\zeta_1}}{f_{\zeta_2}}_{H^1(a,c)} & = \int_a^c \phi_{\zeta_1}(x)^\ast \phi_{\zeta_2}(x) d\chi(x) + \int_a^c \phi_{\zeta_1}^\qd(x)^\ast \phi_{\zeta_2}^\qd(x) d\varsigma(x) \\
       & = K(\zeta_1,\zeta_2,c) = \dbspr{K(\zeta_1,\cdot\,,c)}{K(\zeta_2,\cdot\,,c)}_{B(c)} = \dbspr{\hat{f}_{\zeta_1}}{\hat{f}_{\zeta_2}}_{B(c)}.
 \end{align*}
 Now, since the functions $K(\zeta,\cdot\,,c)$, $\zeta\in\C$ are dense in $B(c)$, our transformation uniquely extends to a unitary linear map $V$ from $D(c)$ onto $B(c)$. Moreover, because the functionals $f\mapsto\hat{f}(z)$ and $f\mapsto Vf(z)$ are continuous on $D(c)$ for each fixed $z\in\C$, we conclude that $V$ is nothing but our transform restricted to $D(c)$.
 Finally, it is easily seen that transforms of functions which are orthogonal to $D(c)$ vanish identically.
\end{proof}

In the following, the closed linear subspace
\begin{align*}
B^\circ(c) = \lbrace F\in B(c) \,|\, F(0)=0 \rbrace
\end{align*}
of functions in $B(c)$ which vanish at zero will be of particular interest. 
This subspace consists precisely of all transforms of functions in $H^1(a,c)$ which vanish in $c$. In fact, an integration by parts shows that
\begin{align*}
 \hat{f}(0) =  \phi_0^\qd(c) f(c), \quad f\in H^1(a,c),
\end{align*}
where the boundary term at $a$ vanishes since $\phi_0$ is a scalar multiple of $w_a$.
Moreover, the orthogonal complement of $B^\circ(c)$ consists of all scalar multiples of the function $K(0,\cdot\,,c)$. Hence it corresponds to the one-dimensional subspace of $D(c)$ spanned by the function $\phi_0|_{(a,c)}$. 

The crucial properties of the de Branges spaces $B(c)$, $c\in(a,b)$ only hold if $c$ lies in the support 
\begin{align*}
 \supp(\varrho) = \overline{\lbrace x\in(a,b) \,|\, \forall\varepsilon>0: |\varrho|((x-\varepsilon,x+\varepsilon))>0 \rbrace}
\end{align*}
of $\varrho$. 
However, for the proof of our inverse uniqueness result a modified set $\Sigma$ instead of $\supp(\varrho)$ will be more convenient. This set $\Sigma\subseteq\supp(\varrho)\cup\lbrace a,b\rbrace$ is defined as follows.
 Take $\supp(\varrho)$ and add $a$ if $\tau$ is regular at $a$, there are no Neumann boundary conditions at $a$ and $|\varrho|$ has no mass near $a$.
  Under similar conditions one adds the endpoint $b$.
  Moreover, if $a$ has not been added, then remove the point $a_\varrho = \inf\supp(\varrho)$ unless $|\varrho|((a_\varrho,c))=0$ for some $c\in(a_\varrho,b)$. Similarly, if $b$ has not been added, then 
   remove the point $b_\varrho=\sup\supp(\varrho)$ unless $|\varrho|((c,b_\varrho))=0$ for some $c\in(a,b_\varrho)$.
 The following lemma gives a hint why this definition might be useful.

\begin{lemma}\label{lemLDdomsSigma}
The closure of the domain of $S$ is given by
\begin{align*}
 \D : = \overline{\dom{S}} = \overline{\linspan\lbrace \delta_c \,|\, c\in\Sigma \rbrace}.
\end{align*}
\end{lemma}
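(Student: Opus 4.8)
The plan is to prove the two set equalities $\D = \overline{\dom{S}}$ and $\D = \overline{\linspan\lbrace \delta_c \,|\, c\in\Sigma \rbrace}$ by understanding precisely which of the point-evaluation functionals $\delta_c$ lie in the closure of the domain. My starting point is the spectral transformation result from Section~5: by Lemma~\ref{lemLDspectrans}, the transform $\mathcal{F}$ is a partial isometry with initial subspace $\overline{\dom{S}}$, and it sends $\delta_c$ to the function $\phi_\lambda(c)$. Hence $\delta_c \in \overline{\dom{S}}$ if and only if the transform $\mathcal{F}\delta_c$ is nonzero, i.e.\ if and only if $\phi_\lambda(c)$ is not $\mu$-almost everywhere zero. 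More generally, the key observation is that $\overline{\dom{S}} = \overline{\linspan\lbrace \delta_c \,|\, c\in(a,b)\rbrace}^{\,\perp\perp}$ relative to the orthogonal complement of $\mul{S}$, since the $\delta_c$ reproduce point evaluation and their span is dense in $\Hab$; so the content of the lemma is really the identification of \emph{which} $\delta_c$ survive modulo $\mul{S}$.

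First I would characterize $\mul{S}^\perp = \overline{\dom{S}}$ directly. Recall from the discussion after Proposition~\ref{propLDweakform} that $h\in\mul{\Tmax}$ iff $h=0$ almost everywhere with respect to $|\varrho|$; and since $S$ is a self-adjoint restriction of $\Tmax$, its multi-valued part is contained in that of $\Tmax$. The orthogonality relation $f\perp\mul{S}$ therefore amounts to $\spr{f}{h}=0$ for all $h$ supported off $\supp(\varrho)$. Using $f(c)=\spr{f}{\delta_c}$, this translates into a condition tying the support of the relevant functions to $\supp(\varrho)$. The natural claim is that $\delta_c\in\overline{\dom{S}}$ precisely when $c\in\supp(\varrho)$ together with the boundary adjustments recorded in the definition of $\Sigma$: at an interior point $c\notin\supp(\varrho)$, the function $\delta_c$ has its transform $\phi_\lambda(c)$ coinciding (via the solution structure and Theorem~\ref{thmLDresolvent}) with a function that is annihilated after projecting out $\mul{S}$, whereas at points of $\supp(\varrho)$ it does not.

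Next I would handle the endpoint and boundary-point subtleties that distinguish $\Sigma$ from $\supp(\varrho)$. The endpoint $a$ is included in $\Sigma$ exactly when $\tau$ is regular at $a$, there is no Neumann condition, and $|\varrho|$ has no mass near $a$; in this regular case $\delta_a = -w_b/w_b^\qd(a)$ is well-defined as an element of $\Hab$ (as established in Section~2), and I would check that it lies in $\overline{\dom{S}}$ precisely under these three conditions by testing it against $\mul{S}$ and against the boundary condition at $a$. The removal of $a_\varrho=\inf\supp(\varrho)$ (unless $|\varrho|$ has no mass just to its right) accounts for the degenerate case where $\delta_{a_\varrho}$ turns out to be orthogonal to $\overline{\dom{S}}$ even though $a_\varrho\in\supp(\varrho)$; I would verify this by the same transform computation, showing $\phi_\lambda(a_\varrho)$ is redundant modulo the closure generated by nearby points. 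The symmetric analysis applies at $b$.

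The main obstacle will be the careful bookkeeping at the boundary of $\supp(\varrho)$ — that is, proving the two refinements by which $\Sigma$ differs from $\supp(\varrho)\cup\lbrace a,b\rbrace$. The interior identification $c\in\supp(\varrho)\Leftrightarrow\delta_c\in\overline{\dom{S}}$ should follow relatively cleanly from continuity of $c\mapsto\delta_c$ in $\Hab$ together with the density of $\linspan\lbrace\delta_c\rbrace$ and the reproducing property. But establishing the exact threshold behavior at $a_\varrho$ and $b_\varrho$, and the precise trio of conditions under which the endpoints $a,b$ are adjoined, requires combining the regular-endpoint limit formulas for $\delta_a$, the Dirichlet/Neumann dichotomy in the boundary condition, and the mass distribution of $|\varrho|$ near the endpoint. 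I expect this to be where the proof concentrates its technical effort; the rest reduces to the partial-isometry structure of $\mathcal{F}$ and the support characterization of $\mul{S}$.
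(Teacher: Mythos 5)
Your argument rests on a false equivalence, and it propagates through the whole plan. Lemma~\ref{lemLDspectrans} says that $\mathcal{F}$ is a partial isometry with initial subspace $\overline{\dom{S}}$; this yields only that $\mathcal{F}\delta_c=0$ if and only if $\delta_c\perp\overline{\dom{S}}$, \emph{not} that $\delta_c\in\overline{\dom{S}}$ if and only if $\mathcal{F}\delta_c\neq 0$. Since $\Hab=\D\oplus\mul{S}$, membership of $\delta_c$ in $\D$ means $\delta_c\perp\mul{S}$, and $\delta_c$ may well have nonzero components in both summands. This actually happens at every interior gap of the support: if $(\alpha,\beta)$ is a gap of $\supp(\varrho)$ with $\alpha,\beta\in\supp(\varrho)$ and $c\in(\alpha,\beta)$, take $h\in H_c^1(a,b)$ supported inside $(\alpha,\beta)$ with $h(c)\neq 0$; then $h$ vanishes $|\varrho|$-almost everywhere, so $h\in\mul{\Tmax}$, and since $h$ vanishes near both endpoints the Wronskian boundary conditions $V((0,h),v_a^\ast)(a)=V((0,h),v_b^\ast)(b)=0$ hold, so $h\in\mul{S}$ and $\spr{\delta_c}{h}=h(c)^\ast\neq 0$, whence $\delta_c\notin\D$. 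On the other hand $\mathcal{F}\delta_c\neq 0$, because $\spr{\delta_c}{\delta_\alpha}=\delta_\alpha(c)\neq 0$ (the solutions $w_a$, $w_b$ have no zeros) while $\delta_\alpha\perp\mul{S}$, i.e.\ $\delta_\alpha\in\D$. So both of your pointwise claims fail: the transform criterion, and the assertion that at gap points the transform is ``annihilated after projecting out $\mul{S}$''. Two further slips compound this. First, replacing orthogonality to $\mul{S}$ by orthogonality to all $h$ vanishing on $\supp(\varrho)$ erases the boundary conditions that cut $\mul{S}$ out of $\mul{\Tmax}$, and it is exactly this difference that decides whether $\delta_a$, $\delta_b$ belong to $\D$, i.e.\ whether $a,b\in\Sigma$. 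Second, your explanation of the removal of $a_\varrho$ is backwards: since every $h\in\mul{S}$ vanishes on $\supp(\varrho)$ (continuity plus vanishing $|\varrho|$-a.e.), one has $\delta_c\in\D$ for \emph{every} $c\in\supp(\varrho)$, including $a_\varrho$; the point $a_\varrho$ is dropped from $\Sigma$ precisely when it is a limit from the right of $\supp(\varrho)$, in which case $\delta_{a_\varrho}$ is a norm limit of $\delta_c$, $c\in\Sigma$, so its removal does not shrink the closed span. It is never ``orthogonal to $\overline{\dom{S}}$''.

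Beyond these incorrect steps, the second half of the lemma is not addressed at all: even with the correct membership statement ($\delta_c\perp\mul{S}$ for all $c\in\Sigma$), one must prove density, i.e.\ that every $h\in\Hab$ orthogonal to all $\delta_c$, $c\in\Sigma$, lies in $\mul{S}$. This is where the paper's proof does its real work: such an $h$ satisfies $h(c)=\spr{h}{\delta_c}=0$ for $c\in\Sigma$, hence by continuity vanishes on $\supp(\varrho)$ and therefore lies in $\mul{\Tmax}$; then, assuming $V((0,h),v_a^\ast)(a)=\lim_{\alpha\rightarrow a}h(\alpha)v_a^\qd(\alpha)^\ast\neq 0$, one infers successively that $\tau$ is in the l.c.\ case at $a$, that $|\varrho|$ has no mass near $a$ (otherwise $h(a)=0$), hence that $\tau$ is regular at $a$ and the boundary condition is not Neumann, so that $a\in\Sigma$ and $h(a)=\spr{h}{\delta_a}=0$, a contradiction; the same argument applies at $b$, so $h\in\mul{S}$. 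Your proposal contains no counterpart to this contradiction argument: the $\perp\perp$ remark only shows that the \emph{projections} of all the $\delta_c$, $c\in(a,b)$, span $\D$, which is not the assertion of the lemma.
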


\begin{proof}
  The multi-valued part of $S$ is given by
  \begin{align}\tag{$*$}\label{eqnLDmulS}
   \mul{S} = \lbrace h\in\mul{\Tmax} \,|\, V((0,h),v^\ast)(a) = V((0,h),w^\ast)(b)=0 \rbrace.
  \end{align}
  Now if $c\in\Sigma\cap(a,b)$, then $\delta_c\,\bot\,\mul{S}$ since each $h\in\mul{\Tmax}$ vanishes almost everywhere with respect to $|\varrho|$.
  Moreover, if $a\in\Sigma$ then $\tau$ is regular at $a$ and there are no Neumann boundary conditions at $a$.
  Thus, each $h\in\mul{S}$ vanishes in $a$ in view of~\eqref{eqnLDmulS} and hence $\delta_a\,\bot\,\mul{S}$. Similarly one shows that $\delta_b\,\bot\,\mul{S}$ provided that $b\in\Sigma$.  
  Hence the closure of the linear span of all functions $\delta_c$, $c\in\Sigma$ is orthogonal to $\mul{S}$ and hence contained in $\D$.  In order to prove the converse let
  \begin{align*}
   h\in\overline{\linspan\lbrace \delta_c \,|\, c\in\Sigma \rbrace}^\bot.
  \end{align*}
  Since $h$ is continuous this implies that $h$ vanishes on $\supp(\varrho)$, hence $h$ lies in $\mul{\Tmax}$. 
  Now suppose that 
  \begin{align}\tag{$**$}\label{eqnLDmulScontra}
   V((0,h),v^\ast)(a) = \lim_{\alpha\rightarrow a} h(\alpha) v^\qd(\alpha)^\ast \not=0,
  \end{align}
  then $\tau$ is necessarily in the l.c.\ case at $a$. If $\varrho$ had mass near $a$, we would infer that $h(a)=0$ since $h$ vanishes on $\supp(\varrho)$. Hence $\tau$ is even regular at $a$ and~\eqref{eqnLDmulScontra} implies that there are no Neumann boundary conditions at $a$. Therefore $a$ lies in $\Sigma$ and hence $h(a)=\spr{h}{\delta_a}=0$, contradicting~\eqref{eqnLDmulScontra}. A similar argument for the right endpoint $b$ shows that $h$ lies in $\mul{S}$, which finishes the proof.
\end{proof}

Also note that functions in  $\D$ are uniquely determined by their values on $\Sigma$. 
 In fact, if $f_1$, $f_2\in\D$ such that $f_1(c)=f_2(c)$, $c\in\Sigma$, then $f_1 - f_2$ lies in the orthogonal complement of $\D$ in view of Lemma~\ref{lemLDdomsSigma} and hence $f_1=f_2$.

Now before we state our main embedding theorem, it remains to introduce the de Branges spaces $B(a)$ if $a\in\Sigma$ and $B(b)$ if $b\in\Sigma$.
 First of all if $a\in\Sigma$, then let $B(a)$ be the one-dimensional space spanned by the entire function $z\mapsto\phi_z(a)$. It does not matter which inner product this space is equipped with; each one turns $B(a)$ into a de Branges space as is easily seen from~\cite[Theorem~23]{dBbook}. In particular, note that $B^\circ(a)=\lbrace 0\rbrace$.
Finally if $b\in\Sigma$, then let $B(b)$ be the de Branges space associated with the de Branges function
\begin{align*}
 E(z,b) = z\phi_z(b) + \I\phi_z^\qd(b), \quad z\in\C.
\end{align*}
The space $B(b)$ has the same properties as the other de Branges spaces $B(c)$, $c\in(a,b)$. For example the reproducing kernel is given as in~\eqref{eqnLDrepkernel} and Theorem~\ref{thmLDdBspaces} holds with $c$ replaced by $b$.

The following result is basically a consequence of Theorem~\ref{thmLDdBspaces} and Proposition~\ref{propLDtransformint}, linking our transformation with the generalized Fourier transform $\mathcal{F}$.
In the following, $\mu$ will denote the spectral measure associated with the real analytic solutions $\phi_z$, $z\in\C^\times$ as constructed in the previous section. However, note that in the present case we may extend $\mu$ to a Borel measure on $\R$ by setting $\mu(\lbrace 0\rbrace)=0$.

\begin{theorem}\label{thmLDdBemb}
For each $c\in\Sigma$ the de Branges space $B(c)$ is a closed subspace of $\Lrmu$ with
\begin{align}\label{eqnLDdBemb}
 \spr{F}{G}_\mu = \dbspr{P^\circ F}{P^\circ G}_{B(c)} + \frac{F(0)G(0)^\ast}{|\phi_0(c)|^2} \|\delta_c\|_{\Hab}^2, \quad F,\,G\in B(c),
\end{align}
where $P^\circ$ is the orthogonal projection from $B(c)$ onto $B^\circ(c)$.
\end{theorem}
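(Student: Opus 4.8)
The plan is to exploit the $B(c)$-orthogonal decomposition $B(c)=B^\circ(c)\oplus \C\,K(0,\cdot\,,c)$ furnished by the reproducing kernel, and to show that this splitting is \emph{also} orthogonal in $\Lrmu$, with the $\mu$-norm reproducing the two summands on the right-hand side. Since $K(0,\cdot\,,c)$ is the reproducing kernel at the point $0$, the projection onto the one-dimensional complement of $B^\circ(c)$ is $(1-P^\circ)F=\frac{F(0)}{K(0,0,c)}K(0,\cdot\,,c)$, where $K(0,0,c)=\phi_0^\qd(c)\phi_0(c)$ is real and non-zero (it is the squared $B(c)$-norm of the non-trivial function $K(0,\cdot\,,c)=\phi_0^\qd(c)\,\phi_{\,\cdot}(c)$, which in particular forces $\phi_0(c)\neq 0$). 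Thus the whole statement follows once three facts are in place: (i) $\spr{F}{G}_\mu=\dbspr{F}{G}_{B(c)}$ for $F,G\in B^\circ(c)$; (ii) $B^\circ(c)\perp K(0,\cdot\,,c)$ in $\Lrmu$; and (iii) $\|K(0,\cdot\,,c)\|_\mu^2=|\phi_0^\qd(c)|^2\,\|\delta_c\|_{\Hab}^2$. Writing $F=P^\circ F+\frac{F(0)}{K(0,0,c)}K(0,\cdot\,,c)$ and likewise for $G$, expanding $\spr{F}{G}_\mu$, and inserting $K(0,0,c)=\phi_0^\qd(c)\phi_0(c)$ together with $|\phi_0(c)|^2=\phi_0(c)^2$ reduces the claimed identity to (i)--(iii) by a short computation.

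The bridge to $\mathcal{F}$ is the identity $\mathcal{F}\delta_c=\phi_{\,\cdot}(c)$ from Lemma~\ref{lemLDspectrans}. Since $c\in\Sigma$, Lemma~\ref{lemLDdomsSigma} gives $\delta_c\in\D=\overline{\dom{S}}$, the initial subspace on which $\mathcal{F}$ is isometric; as $\mu(\lbrace 0\rbrace)=0$ we may integrate over $\R$ to obtain the Parseval identity $\int_\R|\phi_\lambda(c)|^2\,d\mu(\lambda)=\|\delta_c\|_{\Hab}^2$. Because $K(0,\cdot\,,c)=\phi_0^\qd(c)\,\phi_{\,\cdot}(c)=\phi_0^\qd(c)\,\mathcal{F}\delta_c$, fact (iii) is immediate. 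For (ii) take $F=\hat f\in B^\circ(c)$ with $f\in D(c)$ and $f(c)=0$ (so $\hat f(0)=\phi_0^\qd(c)f(c)=0$), and let $\tilde f\in\Hab$ be its extension by zero, which vanishes near $b$. Proposition~\ref{propLDtransformint} yields $\mathcal{F}\tilde f=\hat f$, and using that $\spr{\mathcal{F}u}{\mathcal{F}v}_\mu=\spr{u}{v}_{\Hab}$ whenever $v\in\D$ (since $\mathcal{F}$ is isometric on $\D$ and annihilates its complement) we get
\[
 \spr{\hat f}{K(0,\cdot\,,c)}_\mu=\phi_0^\qd(c)\,\spr{\mathcal{F}\tilde f}{\mathcal{F}\delta_c}_\mu=\phi_0^\qd(c)\,\spr{\tilde f}{\delta_c}_{\Hab}=\phi_0^\qd(c)\,\tilde f(c)=0,
\]
using point evaluation $\spr{\tilde f}{\delta_c}_{\Hab}=\tilde f(c)=f(c)=0$. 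Note that (ii) does not require $\tilde f\in\D$.

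Fact (i) is the heart of the matter and the main obstacle. For $F=\hat f$, $G=\hat g$ with $f,g\in D(c)$ vanishing at $c$, Theorem~\ref{thmLDdBspaces} gives $\dbspr{\hat f}{\hat g}_{B(c)}=\spr{f}{g}_{H^1(a,c)}=\spr{\tilde f}{\tilde g}_{\Hab}$ for the zero-extensions, whereas $\spr{\hat f}{\hat g}_\mu=\spr{\mathcal{F}\tilde f}{\mathcal{F}\tilde g}_\mu$, and the latter equals $\spr{\tilde f}{\tilde g}_{\Hab}$ precisely when $\tilde f,\tilde g\in\D$. Thus (i) amounts to showing that the zero-extensions lie in $\D=\overline{\dom{S}}=(\mul{S})^\bot$, i.e.\ are orthogonal to the multi-valued part. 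Since $\mul{S}\subseteq\mul{\Tmax}$ consists of functions vanishing $|\varrho|$-almost everywhere, I would verify $\spr{\tilde f}{h}_{\Hab}=0$ for every $h\in\mul{S}$ by testing the weak formulation of Proposition~\ref{propLDweakform}, restricted to $(a,c)$, against the solutions $\phi_z$ spanning $D(c)$: for a compactly supported $g$ the sesquilinear form $\int_a^c\phi_z g^\ast\,d\chi+\int_a^c\phi_z^\qd g^{\qd\ast}\,d\varsigma$ equals $z\int_a^c\phi_z g^\ast\,d\varrho$, which vanishes when $g$ vanishes $|\varrho|$-almost everywhere. The only delicate point is the behaviour of $h$ near $c$ and near the endpoints, where $h$ need not be compactly supported; this is exactly where the construction of $\Sigma$ enters. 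The membership $c\in\Sigma\cap(a,b)$ forces $c\in\supp(\varrho)$, hence $h(c)=0$, killing the boundary term of the integration by parts at $c$, while the endpoint conditions defining $\mul{S}$, combined with the inclusion criteria built into $\Sigma$, annihilate the boundary contributions at $a$ and $b$ exactly as in the proof of Lemma~\ref{lemLDdomsSigma}.

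Combining (i)--(iii) through the decomposition of the first paragraph proves the identity on the dense set of transforms of the $\phi_\zeta^\ast|_{(a,c)}$, hence on all of $B(c)$ by continuity; in particular every $F\in B(c)$ has finite $\mu$-norm, so $B(c)\subseteq\Lrmu$. Closedness then follows since the identity shows the $\mu$-norm and the $B(c)$-norm to be equivalent: both split over the $B(c)$-orthogonal decomposition $B^\circ(c)\oplus\C\,K(0,\cdot\,,c)$ with strictly positive weights, so completeness of $B(c)$ in its own norm transfers to completeness, and hence closedness, in $\Lrmu$. Finally, the degenerate endpoint cases are checked directly: when $a\in\Sigma$ one has $B^\circ(a)=\lbrace 0\rbrace$ and $B(a)=\C\,\phi_{\,\cdot}(a)$, so the statement collapses to the Parseval identity $\int_\R|\phi_\lambda(a)|^2\,d\mu(\lambda)=\|\delta_a\|_{\Hab}^2$, while $c=b\in\Sigma$ is identical to the interior case since $B(b)$ is a genuine de Branges space.
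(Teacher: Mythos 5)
Your proposal is correct and is essentially the paper's own proof: the paper's decomposition $f=f_0+f_1$ (with $f_0$ the multiple of $\phi_0$ matching $f(c)$) is exactly your splitting $F=(1-P^\circ)F+P^\circ F$ read on the transform side, its three displayed inner-product computations are precisely your facts (i)--(iii), and (i) rests, as in your outline, on showing that zero-extensions of functions in $D(c)$ vanishing at $c$ are orthogonal to $\mul{S}$, so that Proposition~\ref{propLDtransformint}, Lemma~\ref{lemLDspectrans} and Theorem~\ref{thmLDdBspaces} can be chained together. The only imprecision is your attribution of the vanishing boundary term at $a$ to Lemma~\ref{lemLDdomsSigma} and the construction of $\Sigma$: the paper instead identifies that term as $V((0,h),\phi_z)(a)$ and kills it because both $(0,h)$ and $(\phi_z,z\phi_z)$ lie in $S$ near $a$ (the standing hypothesis on $\phi_z$, combined with the separated boundary condition), which is the mechanism your argument actually needs there.
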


\begin{proof}
First of all note that for $z\in\C$ and $h\in\mul{S}\subseteq\mul{\Tmax}$ we have
\begin{align}\tag{$*$}\label{eqnLDpartinmul}
 \int_a^c \phi_z(x) h(x)^\ast d\chi(x) + \int_a^c \phi_z^\qd(x) h^\qd(x)^\ast d\varsigma(x) = \lim_{x\rightarrow a} \phi_z^\qd(x) h(x),
\end{align}
since $h$ vanishes almost everywhere with respect to $|\varrho|$ (in particular note that $h(c)=0$).
Moreover, the limit on the right-hand side is zero since
\begin{align*}
 \lim_{x\rightarrow a} \phi_z^\qd(x) h(x) = V\left((0,h),\phi_z\right)(a) = 0
\end{align*}
and both, $(0,h)$ and $(\phi_z,z\phi_z)$ lie in $S$ near $a$.
Now, given some arbitrary functions $f$, $g\in\linspan\lbrace\phi_z|_{(a,c)} \,|\, z\in\C\rbrace$, let
\begin{align*}
 f_0(x) = \frac{f(c)}{\phi_0(c)}\phi_0(x) \quad\text{and}\quad f_1(x) = f(x)-f_0(x), \quad x\in(a,c)
\end{align*}
and similarly for the function $g$.
The extensions $\bar{f}_1$, $\bar{g}_1$ of $f_1$, $g_1$, defined by
\begin{align*}
 \bar{f}_1(x) = \begin{cases} f_1(x), & \text{if }x\in(a,c], \\ 0, & \text{if } x\in(c,b), \end{cases}
\end{align*}
and similarly for $\bar{g}_1$, lie in $\Hab$ since $f_1(c)=g_1(c)=0$. Moreover, these extensions even lie in $\D$, because~\eqref{eqnLDpartinmul} shows that they are orthogonal to $\mul{S}$.
Now we get the identity
\begin{equation}\begin{split}\label{eqnLDallthesame}
 \spr{\hat{f}_1}{\hat{g}_1}_\mu & = \spr{\mathcal{F}\bar{f}_1}{\mathcal{F}\bar{g}_1}_\mu = \spr{\bar{f}_1}{\bar{g}_1}_{\Hab} = \spr{f_1}{g_1}_{H^1(a,c)} \\ & = \dbspr{\hat{f}_1}{\hat{g}_1}_{B(c)},
\end{split}\end{equation}
where we used Proposition~\ref{propLDtransformint}, Lemma~\ref{lemLDspectrans} and Theorem~\ref{thmLDdBspaces}.
Moreover, from~\eqref{eqnLDdBkernelzero} (also note that $\delta_c\in\D$) we get 
\begin{align*}
 \spr{\hat{f}_0}{\hat{g}_0}_\mu & = f_0(c)g_0(c)^\ast \left|\frac{\phi_0^\qd(c)}{\phi_0(c)}\right|^2 \int_\R |\phi_\lambda(c)|^2 d\mu(\lambda) = \frac{\hat{f}_0(0) \hat{g}_0(0)^\ast}{|\phi_0(c)|^2} \|\delta_c\|_{\Hab}^2.
\end{align*}
Furthermore, 
 \begin{align*}
  \spr{\hat{f}_1}{\hat{g}_0}_\mu & = g_0(c)^\ast \frac{\phi_0^\qd(c)}{\phi_0(c)} \int_\R \phi_\lambda(c) \hat{f}_1(\lambda) d\mu(\lambda) = g_0(c)^\ast \frac{\phi_0^\qd(c)}{\phi_0(c)} f_1(c) = 0,
 \end{align*}
 i.e.\ the function $\hat{g}_0$ is orthogonal to $\hat{f}_1$ not only in $B(c)$ but also in $\Lrmu$.
Using these properties, we finally obtain
 \begin{align*}
  \spr{\hat{f}}{\hat{g}}_\mu & = \spr{\hat{f}_1}{\hat{g}_1}_\mu + \spr{\hat{f}_0}{\hat{g}_0}_\mu 
                  = \spr{P^\circ \hat{f}}{P^\circ \hat{g}}_{B(c)} + \frac{\hat{f}(0) \hat{g}(0)^\ast}{|\phi_0(c)|^2} \|\delta_c\|_{\Hab}^2.
 \end{align*}
Hence~\eqref{eqnLDdBemb} holds for all $F$, $G$ in a dense subspace of $B(c)$. Now it is quite easy to see that $B(c)$ is actually continuously embedded in $\Lrmu$ and that~\eqref{eqnLDdBemb} holds for all $F$, $G\in B(c)$. Moreover, $B(c)$ is a closed subspace of $\Lrmu$ since the norms $\|\cdot\|_{B(c)}$ and $\|\cdot\|_\mu$ are equivalent on $B(c)$.
\end{proof}

In particular, note that under the assumption of Theorem~\ref{thmLDdBemb} the subspace $B^\circ(c)$ is isometrically embedded in $\Lrmu$.
Moreover, the embedding $B(c)\rightarrow\Lrmu$ preserves orthogonality and a simple calculation shows that for functions $F$ in the orthogonal complement of $B^\circ(c)$ we have 
\begin{align}
 \|F\|_{B(c)}^2 = \frac{|F(0)|^2}{\phi_0^\qd(c) \phi_0(c)} =  \left(1-\frac{w_b^\qd(c)}{w_b(c)} \frac{w_a(c)}{w_a^\qd(c)}\right) \| F\|_{\mu}^2,
\end{align}
at least if $c\not=a$. 
This difference between $B^\circ(c)$ and its orthogonal complement stems from the fact that the functions in $H^1(a,c)$ corresponding to $B^\circ(c)$ are isometrically embedded in $\Hab$, whereas the functions corresponding to its orthogonal complement are not.

The following results contain further properties of our de Branges spaces which are needed for the inverse uniqueness theorem in the next section. First of all, we will show that they are totally ordered and strictly increasing.

\begin{proposition}\label{propLDdBincl}
If $c_1$, $c_2\in\Sigma$ with $c_1<c_2$, then 
\begin{align*}
B(c_1)\subsetneq B(c_2).
\end{align*} 
 Moreover, if $|\varrho|((c_1,c_2))=0$ then $B(c_1)$ has codimension one in $B(c_2)$.
\end{proposition}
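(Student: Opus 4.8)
The plan is to prove the inclusion $B(c_1)\subseteq B(c_2)$ by working on the function side via Theorem~\ref{thmLDdBspaces}, and then to establish strictness and the codimension count. First I would recall that $B(c_i)$ is the isometric image (on its initial subspace $D(c_i)$) of the transform $f\mapsto\hat{f}$ on the modified Sobolev space $H^1(a,c_i)$. Given $F\in B(c_1)$, write $F=\hat{f}$ for some $f\in D(c_1)\subseteq H^1(a,c_1)$. The natural idea is to extend $f$ from $(a,c_1)$ to $(a,c_2)$ and compare transforms. Since the transform~\eqref{eqnLDdBtrans} only integrates over the relevant interval, I would extend $f$ by a suitable solution of $\tau u=0$ on $(c_1,c_2)$ so that the extended function still lies in $H^1(a,c_2)$ and its transform agrees with $\hat{f}$; the cleanest choice is to extend $f$ so that it equals a scalar multiple of $w_b$ (or any solution lying in $\Hab$ near $b$) on $[c_1,c_2)$, matching the value $f(c_1)$. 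Because solutions of $\tau u=0$ contribute nothing new to the transform beyond boundary terms, the integration-by-parts identity (as in~\eqref{eqnLDdeltacutoff}) should show that the transform of the extension equals $\hat{f}$ up to a term proportional to $\phi_z$ evaluated at the relevant point, which must again lie in $B(c_2)$. This gives $B(c_1)\subseteq B(c_2)$.

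A cleaner alternative, which I would actually prefer, is to invoke de Branges' ordering machinery directly: by Theorem~\ref{thmLDdBemb} each $B(c)$ is a closed subspace of $\Lrmu$ with a comparable norm, and the spaces share the same reproducing structure through the common entire solution $\phi_z$. Since $K(\zeta,z,c)$ in~\eqref{eqnLDrepkernel} is an integral over $(a,c)$, the identity
\begin{align*}
 K(\zeta,z,c_2) - K(\zeta,z,c_1) = \int_{c_1}^{c_2} \phi_\zeta(x)^\ast \phi_z(x)\,d\chi(x) + \int_{c_1}^{c_2} \phi_\zeta^\qd(x)^\ast \phi_z^\qd(x)\,d\varsigma(x)
\end{align*}
exhibits the kernels as nested, which is exactly the situation where de Branges' theory (Appendix~\ref{appLDbranges}, and~\cite[Theorem~33]{dBbook}) guarantees isometric containment of the smaller space in the larger one. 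I would verify that the de Branges function $E(\,\cdot\,,c_1)$ divides $E(\,\cdot\,,c_2)$ in the appropriate sense, or equivalently that the reproducing kernel difference above is itself a positive kernel, which it manifestly is since it is a Gram-type expression in $\phi_z$, $\phi_z^\qd$ over the positive measures $\chi$ and $\varsigma$.

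For strictness, I would argue that $B(c_1)=B(c_2)$ would force the difference kernel to vanish identically, i.e.\ $\int_{c_1}^{c_2}|\phi_z|^2 d\chi + \int_{c_1}^{c_2}|\phi_z^\qd|^2 d\varsigma = 0$ for all $z$; but this is impossible because $\phi_z$ cannot vanish identically on $(c_1,c_2)$ together with its quasi-derivative (this would contradict nontriviality of the solution), and $\varsigma$ is supported on the whole interval by Hypothesis~\ref{genhypLD}(ii). This pins down $B(c_1)\subsetneq B(c_2)$. For the codimension-one claim under $|\varrho|((c_1,c_2))=0$, I would pass back to the function side: the difference between $H^1(a,c_2)$-transforms and $H^1(a,c_1)$-transforms is governed entirely by how much $D(c_2)$ exceeds (the extension of) $D(c_1)$, and when $\varrho$ has no mass on $(c_1,c_2)$, the relevant space of genuinely new solution-data is one-dimensional. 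Concretely, I expect to show that any $F\in B(c_2)$ is, modulo $B(c_1)$, a scalar multiple of the single function $K(0,\cdot\,,c_2)$ (equivalently $z\mapsto\phi_z(c_2)$ via~\eqref{eqnLDdBkernelzero}), because the vanishing of $|\varrho|$ on $(c_1,c_2)$ collapses the multi-valued/boundary contributions so that the only surviving new direction is the one spanned by $\phi_0|_{(a,c_2)}$.

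The main obstacle will be the codimension-one assertion, not the plain inclusion. The inclusion and strictness follow from the positivity of the kernel difference in a fairly mechanical way. The codimension statement, by contrast, requires carefully tracking exactly which degrees of freedom are added when passing from $c_1$ to $c_2$ across an interval where $\varrho$ is null; the subtlety is that the measure $\varsigma$ (and possibly $\chi$) may still be active on $(c_1,c_2)$, so the function space $H^1(a,c_2)$ is genuinely larger than $H^1(a,c_1)$, yet the \emph{image} spaces $B(c_i)$ differ by only one dimension. I would expect to resolve this by using the fact that $D(c)$ is the closed span of $\phi_z|_{(a,c)}$ and analyzing, via the reproducing-kernel identity above, that the new directions all collapse onto multiples of $\phi_0$ precisely when $\varrho$ contributes no additional point evaluations on $(c_1,c_2)$; tying this to the orthogonal-complement description of $B^\circ(c)$ (spanned by $K(0,\cdot\,,c)$) should complete the count.
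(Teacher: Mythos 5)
Your strictness argument contains the genuine gap, and it traces back to the word ``isometric''. Positivity of the kernel difference $K(\cdot,\cdot,c_2)-K(\cdot,\cdot,c_1)$ does yield the set inclusion $B(c_1)\subseteq B(c_2)$ (this is Aronszajn's inclusion theorem for reproducing kernel spaces, which produces a \emph{contractive} embedding), but it does not yield isometric containment, and in fact the embedding $B(c_1)\hookrightarrow B(c_2)$ is not isometric here: by Theorem~\ref{thmLDdBemb} and the remark following it, the $B(c)$-norm agrees with the $\Lrmu$-norm on $B^\circ(c)$ but is rescaled on the one-dimensional complement by the $c$-dependent factor $1-\frac{w_b^\qd(c)\,w_a(c)}{w_b(c)\,w_a^\qd(c)}$. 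Consequently your key step fails: equality of $B(c_1)$ and $B(c_2)$ \emph{as sets of entire functions} does not force their reproducing kernels to coincide --- two reproducing kernel spaces can be equal as sets, even with equivalent norms, while carrying different kernels (compare the kernels $K$ and $2K$); kernel equality requires equality of inner products, which is exactly what is unavailable. The paper's strictness argument goes through the common $\Lrmu$ structure instead: assume $z\mapsto\phi_z(c_2)\in B(c_1)$; since this function is orthogonal to $B^\circ(c_2)\supseteq B^\circ(c_1)$ in $\Lrmu$, Theorem~\ref{thmLDdBemb} forces it to be proportional to $z\mapsto\phi_z(c_1)$, and then Lemma~\ref{lemLDspectrans} would make $\delta_{c_1}$ and $\delta_{c_2}$ linearly dependent elements of $\D$, a contradiction. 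Some such use of the $\Lrmu$-embedding is unavoidable; kernel positivity alone cannot detect strictness.

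The remaining parts are closer to the mark but incomplete. Your first, function-side sketch of the inclusion is essentially the paper's actual proof, except that the paper works with the orthogonal decomposition $H^1(a,c_2)=H^1_-(a,c_2)\oplus\linspan\lbrace\delta\rbrace\oplus H^1_+(a,c_2)$ ($\delta$ the point-evaluation representative at $c_1$, $H^1_\mp$ the functions vanishing on $(c_1,c_2)$ resp.\ $(a,c_1)$): the transforms of the first two summands are exactly $B^\circ(c_1)$ and its complement. In your extension version (matching only the value at $c_1$ by a multiple of $w_b$) the boundary terms are $\phi_z(c_2)u^\qd(c_2)-\phi_z(c_1)u^\qd(c_1)$, and the membership of $z\mapsto\phi_z(c_1)$ in $B(c_2)$ is not free --- it is precisely what the $\delta$-direction supplies, so this ingredient cannot be skipped. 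For the codimension claim you only state a plan; the actual computation is an integration by parts showing that for $f$ vanishing on $(a,c_1]$ one has $\hat f(z)=\phi_z^\qd(c_2)f(c_2)-\phi_z^\qd(c_1)f(c_1)+z\int_{c_1}^{c_2}\phi_z f\,d\varrho=\phi_z^\qd(c_2)f(c_2)$ when $|\varrho|((c_1,c_2))=0$, so the new direction is spanned by $z\mapsto\phi_z^\qd(c_2)$, not by $K(0,\cdot,c_2)$ as you predicted (the two do agree modulo $B(c_1)$, but that itself needs an argument, e.g.\ that the transfer matrix of $\tau u=0$ across a $\varrho$-null interval is $z$-independent). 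Finally, note that this proposition carries no bounded-type hypothesis, so the ordering machinery of Theorems~\ref{thmLDdBordpitt} and~\ref{thmLDdBordkotani} is not applicable at this stage; it enters only later, in Lemma~\ref{lemLDeta}, with the present proposition as an input.
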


\begin{proof}
 If $\delta\in H^1(a,c_2)$ is such that
 \begin{align*}
  \spr{f}{\delta}_{H^1(a,c_2)} = f(c_1), \quad f\in H^1(a,c_2),
 \end{align*}
 then the modified Sobolev space $H^1(a,c_2)$ may be decomposed into
 \begin{align*}
  H^1(a,c_2) = H_{-}^1(a,c_2)\oplus \linspan\lbrace\delta\rbrace \oplus H_{+}^1(a,c_2).
 \end{align*}
 Here $H_-^1(a,c_2)$ is the subspace of functions in $H^1(a,c_2)$ vanishing on $(c_1,c_2)$ and $H_+^1(a,c_2)$ is the subspace of functions in $H^1(a,c_2)$ vanishing on $(a,c_1)$.
 Now the transforms of functions in $H_-^1(a,c_2)$ are precisely the transforms of functions in $H^1(a,c_1)$ which vanish in $c_1$, i.e.\ $B^\circ(c_1)$.
  The transform of the subspace $\linspan\lbrace\delta\rbrace$ is precisely the orthogonal complement of $B^\circ(c_1)$. 
 Hence one sees that $B(c_1)$ is contained in $B(c_2)$.  In order to prove that $B(c_2)$ is larger indeed, suppose that the function $z\mapsto\phi_z(c_2)$ belongs to $B(c_1)$.
  Since this function is orthogonal to $B^\circ(c_2)$ it is also orthogonal to $B^\circ(c_1)$ by Theorem~\ref{thmLDdBemb}.  Thus we infer that the functions $z\mapsto\phi_z(c_1)$ and $z\mapsto\phi_z(c_2)$ are linearly dependent. Now from Lemma~\ref{lemLDspectrans} (hereby also note that $\delta_{c_1}$ and $\delta_{c_2}$ lie in $\D$) one sees that $\delta_{c_1}$ and $\delta_{c_2}$ are also linearly dependent, which gives a contradiction. 
  
 It remains to prove that the space of transforms of functions in $H_+^1(a,c_2)$ is at most one-dimensional provided that $|\varrho|((c_1,c_2))=0$.
  Indeed, for each function $f\in H_+^1(a,c_2)$ an integration by parts shows that
 \begin{align*}
  \hat{f}(z) = \phi_z^\qd(c_2)f(c_2) - \phi_z^\qd(c_1) f(c_1) + z \int_{c_1}^{c_2} \phi_z f\, d\varrho = \phi_z^\qd(c_2) f(c_2), \quad z\in\C,
 \end{align*}
 since $f$ vanishes on $(a,c_1]$ and $|\varrho|((c_1,c_2))=0$. 
\end{proof}

The following result shows that our de Branges spaces are continuous in some sense.
This is due to the assumption that the measure $\varsigma$ is absolutely continuous with respect to the Lebesgue measure. Otherwise, there would be jumps of dimension one in points where $\varsigma$ has mass.

\begin{proposition}\label{propLDcontinuous}
If $c$, $\alpha_n$, $\beta_n\in\supp(\varrho)$, $n\in\N$ are such that $\alpha_n\uparrow c$ and $\beta_n\downarrow c$ as $n\rightarrow\infty$, then 
\begin{align}\label{eqnLDdBcont}
 \overline{\bigcup_{n\in\N} B(\alpha_n)} = B(c) = \bigcap_{n\in\N} B(\beta_n),
\end{align}
where the closure is taken in $\Lrmu$.
\end{proposition}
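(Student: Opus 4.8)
The plan is to transport the statement through the generalized Fourier transform $\mathcal{F}$, under which the de Branges spaces become concrete subspaces of $\Hab$. Writing $N_c = \lbrace g\in\D \,|\, g=0 \text{ on }[c,b)\rbrace$, I would first establish that
\[ \mathcal{F}^{-1}(B(c)) = N_c \oplus \linspan\lbrace\delta_c\rbrace \]
for every $c\in\supp(\varrho)\cap(a,b)$, the sum being orthogonal. Indeed, $B^\circ(c)$ consists of the transforms of functions in $H^1(a,c)$ vanishing at $c$; extending such a function by zero produces an element of $N_c\subseteq\D$ whose $\mathcal{F}$-transform coincides with $\hat{f}$ by Proposition~\ref{propLDtransformint} and the computation in the proof of Theorem~\ref{thmLDdBemb}. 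Conversely, for $g\in N_c$ the same proposition gives $\mathcal{F}g=\hat{g}$ with $\hat{g}(0)=\phi_0^\qd(c)g(c)=0$, so $\mathcal{F}g\in B^\circ(c)$; since $\mathcal{F}$ is injective on $\D$ (Lemma~\ref{lemLDspectrans}) this yields $\mathcal{F}^{-1}(B^\circ(c))=N_c$. The remaining one-dimensional complement is spanned by $K(0,\cdot\,,c)=\phi_0^\qd(c)\,\mathcal{F}\delta_c$, hence by $\delta_c$, which is orthogonal to $N_c$ exactly as in Theorem~\ref{thmLDdBemb}.

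The second ingredient is the norm continuity of $x\mapsto\delta_x$. From the reproducing property $\spr{f}{\delta_x}=f(x)$ one gets
\[ \|\delta_x-\delta_c\|_{\Hab}^2 = \delta_x(x) - 2\,\delta_x(c) + \delta_c(c), \]
which tends to $0$ as $x\to c$ because $(x,y)\mapsto\delta_x(y)$ is jointly continuous, being built from the continuous functions $w_a$ and $w_b$. In particular $\delta_{\alpha_n}\to\delta_c$ and $\delta_{\beta_n}\to\delta_c$ in $\Hab$. Since $\mathcal{F}$ is a bijective isometry of $\D$ onto $\Lrmu$, it maps closures to closures and intersections to intersections, so both equalities in~\eqref{eqnLDdBcont} may now be verified on the $\Hab$-side.

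For the left equality, $B(\alpha_n)\subseteq B(c)$ (Proposition~\ref{propLDdBincl}) gives $N_{\alpha_n}\oplus\linspan\lbrace\delta_{\alpha_n}\rbrace\subseteq N_c\oplus\linspan\lbrace\delta_c\rbrace$, and the right-hand side is closed and contains each $\delta_{\alpha_n}$, so it contains the closed span of the union. The reverse inclusion reduces to $\overline{\bigcup_n N_{\alpha_n}}=N_c$: given $g\in N_c$, it vanishes on $[c,b)$ and is continuous (as $\varsigma$ is absolutely continuous, functions of $\Hab$ have no jumps), so $g(\alpha_n)\to g(c)=0$, and truncating $g$ to vanish on $[\alpha_n,b)$ alters it only on the shrinking interval $[\alpha_n,c)$, where its $\Hab$-norm tends to $0$; this produces approximants in $N_{\alpha_n}$. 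Together with $\delta_{\alpha_n}\to\delta_c$ this gives the left identity.

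The right equality is the main obstacle, and it is precisely here that absolute continuity of $\varsigma$ is indispensable. It suffices to show $\bigcap_n\bigl(N_{\beta_n}\oplus\linspan\lbrace\delta_{\beta_n}\rbrace\bigr)=N_c\oplus\linspan\lbrace\delta_c\rbrace$. The closed subspaces $N_{\beta_n}$ decrease, and any $g$ in their intersection vanishes on $\bigcup_n[\beta_n,b)=(c,b)$; since $\varsigma$ is absolutely continuous such a $g$ is continuous, whence $g(c)=\lim_{x\downarrow c}g(x)=0$ and $g\in N_c$, so $\bigcap_n N_{\beta_n}=N_c$. (Were $\varsigma$ to have mass at $c$, a function could jump there, and this extra dimension would be exactly the one-dimensional jump of the chain.) Finally, for $g$ in the full intersection write $g=h_n+t_n\delta_{\beta_n}$ along the orthogonal splitting; then $t_n=\spr{g}{\delta_{\beta_n}}/\|\delta_{\beta_n}\|^2\to t:=\spr{g}{\delta_c}/\|\delta_c\|^2$ and $h_n=g-t_n\delta_{\beta_n}\to h:=g-t\delta_c$, while $h_n\in N_{\beta_m}$ for all $n\ge m$ forces $h\in\bigcap_m N_{\beta_m}=N_c$. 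Hence $g=h+t\delta_c\in N_c\oplus\linspan\lbrace\delta_c\rbrace$, completing the argument.
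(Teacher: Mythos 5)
Your framework — transporting everything through $\mathcal{F}$, identifying $\mathcal{F}^{-1}(B(x))\cap\D=N_x\oplus\linspan\{\delta_x\}$ for interior points $x$ of $\supp(\varrho)$, and then doing the lattice computations in $\Hab$ — is sound, and your handling of the second equality (nested closed subspaces $N_{\beta_n}$ plus norm-continuity of $x\mapsto\delta_x$) is correct and a genuinely different, arguably cleaner, route than the paper's, which instead takes preimages $f_n\in D(\beta_n)$ of $F$, extracts a weakly convergent subsequence using the uniform norm bounds from Theorem~\ref{thmLDdBemb}, and estimates the tail integrals over $(c,\beta_n)$. However, the first equality contains a genuine gap. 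The truncation $g\indik_{(a,\alpha_n)}$ of $g\in N_c$ is not an element of $\Hab$: it has a jump of height $g(\alpha_n)$ at $\alpha_n$, and since $\varsigma$ is absolutely continuous, $\varsigma(\{\alpha_n\})=0$, so a function with a jump at $\alpha_n$ cannot be locally absolutely continuous with respect to $\varsigma$ — this is exactly the continuity of $\Hab$-functions you invoke two lines later for the other inclusion. Knowing $g(\alpha_n)\to 0$ makes the jump asymptotically small, but it does not produce elements of $N_{\alpha_n}$; the truncations are not in the Hilbert space at all. A second, unaddressed point is that $N_{\alpha_n}$ requires membership in $\D$, i.e.\ orthogonality to $\mul{S}$, and this must be verified for whatever repaired approximants one uses.

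The gap can be closed, but it takes real work. For instance, set $g_n=\bigl(g-\tfrac{g(\alpha_n)}{w_a(\alpha_n)}w_a\bigr)\indik_{(a,\alpha_n]}$: this is continuous at $\alpha_n$, lies in $\Hab$ (recall $w_a$ has no zeros and lies in $\Hab$ near $a$), vanishes on $[\alpha_n,b)$, and $g_n\to g$ in $\Hab$ because $g(\alpha_n)\to 0$ and the tail integrals of $g$ over $(\alpha_n,c)$ vanish. The cleanest way to finish then bypasses $\D$ entirely: since $g_n$ vanishes near $b$, Proposition~\ref{propLDtransformint} and Theorem~\ref{thmLDdBspaces} (applied on $(a,\alpha_n)$) give $\mathcal{F}g_n\in B(\alpha_n)$, and boundedness of $\mathcal{F}$ yields $\mathcal{F}g\in\overline{\bigcup_n B(\alpha_n)}$; together with $\mathcal{F}\delta_{\alpha_n}\to\mathcal{F}\delta_c$ this proves the first equality. (If one insists on staying inside $\D$, one must additionally check $g_n\perp\mul{S}$, which is true but needs, e.g., the observation that $h\indik_{(\alpha_n,c)}\in\mul{S}$ for $h\in\mul{S}$ because $h$ vanishes at the support points $\alpha_n$, $c$, together with the integration-by-parts identity for $w_a$ against $\mul{S}$ from the proof of Theorem~\ref{thmLDdBemb}.) Note that this repaired argument essentially reproduces the paper's own proof of the first equality — approximate by functions vanishing before $c$ and pass to transforms — so the left half of your proposal, once fixed, is no longer a shortcut; the genuine novelty of your approach lies in the right half.
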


\begin{proof}
From Proposition~\ref{propLDdBincl} it is clear that
\begin{align*}
 \overline{\bigcup_{n\in\N} B(\alpha_n)} \subseteq B(c) \subseteq \bigcap_{n\in\N} B(\beta_n).
\end{align*}
If $F\in B^\circ(c)$, then there is an $f\in H^1(a,c)$ with $f(c)=0$ such that $\hat{f}=F$. Now choose a sequence $f_k\in H^1(a,c)$, $k\in\N$ of functions which vanish near $c$, such that $f_k\rightarrow f$ as $k\rightarrow\infty$.
By our assumptions the transform of each of these functions lies in $B(\alpha_n)$, provided that $n\in\N$ is large enough, i.e.
\begin{align*}
 \hat{f}_k \in \bigcup_{n\in\N} B(\alpha_n), \quad k\in\N.
\end{align*} 
Consequently the transform of $f$ lies in the closure of this union. Moreover, for each $n\in\N$ the entire function $z\mapsto\phi_z(\alpha_n)$ lies in $B(\alpha_n)$. Now since $\delta_{\alpha_n}\rightarrow\delta_c$ in $\Hab$, Lemma~\ref{lemLDspectrans} shows that the entire function $z\mapsto\phi_z(c)$ lies in the closure of our union which proves the first equality in~\eqref{eqnLDdBcont}. 

Next, if $F\in B(\beta_n)$ for each $n\in\N$, then there are $f_n\in D(\beta_n)$ such that
\begin{align*}
 F(z) = \int_a^{\beta_n} \phi_z(x) f_n(x) d\chi(x) + \int_a^{\beta_n} \phi_z^\qd(x) f_n^\qd(x) d\varsigma(x), \quad z\in\C,~n\in\N.
\end{align*}
Moreover, from Theorem~\ref{thmLDdBspaces} and Theorem~\ref{thmLDdBemb} we infer
\begin{align*}
 \|f_n\|_{H^1(a,\beta_n)}^2 = \|F\|_{B(\beta_n)}^2 \leq \left(1+\left|\frac{w_b^\qd(\beta_n)}{w_b(\beta_n)} \frac{w_a(\beta_n)}{w_a^\qd(\beta_n)}\right|\right) \|F\|_\mu^2, \quad n\in\N,
\end{align*}
where the coefficient on the right-hand side is bounded uniformly for all $n\in\N$ by the properties of the solutions $w_a$ and $w_b$.
Hence there is some subsequence of $f_n|_{(a,c)}$, $n\in\N$ converging weakly in $H^1(a,c)$ to say $f$. Now this yields for all $z\in\C$ 
\begin{align*}
 F(z) = \hat{f}(z) + \chi(\lbrace c\rbrace) \phi_z(c) f(c) + \lim_{n\rightarrow\infty} \int_{(c,\beta_n)} \phi_z f_n\, d\chi +\int_{(c,\beta_n)} \phi_z^\qd f_n^\qd d\varsigma,
\end{align*}
where the limit is actually zero. In fact, for each $z\in\C$ and $n\in\N$ we have
\begin{align*}
  \left|\int_{(c,\beta_n)} \phi_z f_n d\chi +\int_{(c,\beta_n)} \phi_z^\qd f_n^\qd d\varsigma\right|  
    \leq C_z^2 \|f_n\|_{H^1(a,\beta_n)} (\chi((c,\beta_n)) + \varsigma((c,\beta_n))),
\end{align*}
where $C_z\in\R$ is such that the moduli of $\phi_z$ and $\phi_z^\qd$ on $(c,\beta_1)$ are bounded by $C_z$. 
But this shows that $F$ actually is the transform of a function in $\Hab$ and hence lies in $B(c)$ which finishes the proof.
\end{proof}

Finally we will prove that our de Branges spaces decrease to zero near $a$ and fill the whole space $\Lrmu$ near $b$. 

\begin{proposition}\label{propLDemptydense}
The de Branges spaces $B(c)$, $c\in\Sigma$ satisfy
\begin{align}
 \bigcap_{c\,\in\,\Sigma} B^\circ(c) = \lbrace 0\rbrace \quad\text{and}\quad \overline{\bigcup_{c\,\in\,\Sigma} B(c)} = \Lrmu. 
\end{align}
\end{proposition}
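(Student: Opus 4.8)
The plan is to transport both identities through the generalized Fourier transform $\mathcal{F}$, which by Lemma~\ref{lemLDspectrans} is unitary from $\D=\overline{\dom{S}}$ onto $\Lrmu$ (and vanishes on $\D^\bot=\mul{S}$), and to use the identifications of $B(c)$ and $B^\circ(c)$ as transforms of explicit subspaces provided by Theorem~\ref{thmLDdBspaces}, Theorem~\ref{thmLDdBemb} and Proposition~\ref{propLDtransformint}. The only genuinely structural input will be the remark following Lemma~\ref{lemLDdomsSigma} that a function in $\D$ is determined by its values on $\Sigma$.

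For the first identity I would argue as follows. Recall that $B^\circ(c)$ consists precisely of the transforms of those $f\in H^1(a,c)$ with $f(c)=0$, and that extending such an $f$ by zero yields a function $\bar f\in\D$ supported in $(a,c]$ with $\mathcal{F}\bar f=\hat f$ (the extension lies in $\D$ by the orthogonality to $\mul{S}$ established in~\eqref{eqnLDpartinmul}, and the transform identity is Proposition~\ref{propLDtransformint}). Now let $F\in\bigcap_{c\in\Sigma}B^\circ(c)$. For each $c\in\Sigma$ with $c<b$ write $F=\mathcal{F}\bar f_c$ with $\bar f_c\in\D$ supported in $(a,c]$ and $\bar f_c(c)=0$. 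Since $\mathcal{F}$ is isometric, hence injective, on $\D$, all the functions $\bar f_c$ coincide with a single $g\in\D$. This $g$ is then supported in $(a,\inf\Sigma]$ and satisfies $g(c)=0$ at every $c\in\Sigma$ (directly from $\bar f_c(c)=0$ when $c<b$, and because $g$ is supported in $(a,\inf\Sigma]$ when $c=b$). Because functions in $\D$ are determined by their values on $\Sigma$, we conclude $g=0$ and therefore $F=\mathcal{F}g=0$.

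For the second identity the key observation is that $\mathcal{F}\delta_c$ already lies in $B(c)$ for every $c\in\Sigma$: by Lemma~\ref{lemLDspectrans} we have $\mathcal{F}\delta_c(\lambda)=\phi_\lambda(c)$, and by~\eqref{eqnLDdBkernelzero} the entire function $z\mapsto\phi_z(c)$ is the reproducing kernel $K(0,\cdot\,,c)$ up to the nonzero factor $\phi_0^\qd(c)$, hence an element of $B(c)$. Since the spaces $B(c)$ are nested by Proposition~\ref{propLDdBincl}, their union $\bigcup_{c\in\Sigma}B(c)$ is a linear subspace of $\Lrmu$ containing every $\mathcal{F}\delta_c$, $c\in\Sigma$. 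Its closure therefore contains $\overline{\mathcal{F}\,\linspan\lbrace\delta_c\mid c\in\Sigma\rbrace}$; but $\linspan\lbrace\delta_c\mid c\in\Sigma\rbrace$ is dense in $\D$ by Lemma~\ref{lemLDdomsSigma} and $\mathcal{F}$ maps $\D$ onto all of $\Lrmu$, so this closure is already $\Lrmu$.

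I expect the main obstacle to lie in the first identity, specifically in verifying that the zero extensions $\bar f_c$ genuinely belong to $\D$ and in treating the behaviour at the endpoints of $\Sigma$ --- exactly the points at which the delicate definition of $\Sigma$ matters. The conceptual crux is that $\Sigma$ was tailored so that a $\D$-function is pinned down by its values there; this is what upgrades ``$g$ vanishes at every point of $\Sigma$'' to ``$g=0$''. By contrast, the second identity is comparatively soft, being essentially a density statement for the $\delta_c$ together with the membership $\mathcal{F}\delta_c\in B(c)$.
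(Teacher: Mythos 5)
Your proof is correct, and for the first identity it takes a genuinely different route from the paper. The paper works inside the de Branges spaces themselves: when $\supp(\varrho)$ accumulates at $a$ it combines the isometric embedding of $B^\circ(c)$ (Theorem~\ref{thmLDdBemb}) with a reproducing-kernel estimate and the fact, read off from~\eqref{eqnLDrepkernel}, that $K(\zeta,\zeta,c)\to 0$ as $c\to a$; when $\alpha_\varrho=\inf\supp(\varrho)>a$ it instead analyses the space $D^\circ(\alpha_\varrho)$, whose elements are solutions of $\tau u=0$ on $(a,\alpha_\varrho)$, splitting into sub-cases ($\varsigma+\chi$ infinite near $a$, Neumann conditions, or $a\in\Sigma$, when $B^\circ(a)=\lbrace 0\rbrace$ trivially). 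You replace this entire case analysis by pulling $F$ back through $\mathcal{F}$ and invoking the remark after Lemma~\ref{lemLDdomsSigma} that elements of $\D$ are determined by their values on $\Sigma$; this is uniform in the endpoint behaviour and makes transparent why $\Sigma$ was defined the way it was. What your route costs is the identification of every element of $B^\circ(c)$ as $\mathcal{F}\bar f$ with $\bar f\in\D$ supported in $(a,c]$ and $\bar f(c)=0$: the two ingredients you cite, \eqref{eqnLDpartinmul} and the identity $\mathcal{F}\bar f=\hat f$ via Proposition~\ref{propLDtransformint}, are established in the paper only for the dense span $\linspan\lbrace\phi_z|_{(a,c)}\,|\,z\in\C\rbrace$ inside the proof of Theorem~\ref{thmLDdBemb}, so you owe a short limiting argument --- approximate $f\in D(c)$ with $f(c)=0$ by $f_k-\frac{f_k(c)}{\phi_0(c)}\phi_0$ with $f_k$ in that span, use that extension by zero is isometric on functions vanishing at $c$, that $\D$ is closed, and that the $B(c)$- and $\Lrmu$-norms agree on $B^\circ(c)$ by Theorem~\ref{thmLDdBemb}. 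Two further loose ends are harmless: if $b\in\Sigma$ the extension is trivial ($\bar f_b=f_b$, and $\mathcal{F}f_b=\hat f_b$ because $\tau$ is then regular, hence l.c., at $b$), and if $a\in\Sigma$ the first identity is immediate from $B^\circ(a)=\lbrace 0\rbrace$. Your proof of the second identity coincides with the paper's: both rest on the density of $\linspan\lbrace\mathcal{F}\delta_c\,|\,c\in\Sigma\rbrace$ in $\Lrmu$ via Lemma~\ref{lemLDdomsSigma} and Lemma~\ref{lemLDspectrans}, with Proposition~\ref{propLDdBincl} ensuring the union of the $B(c)$ is a linear subspace.
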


\begin{proof}
 First suppose that $\supp(\varrho)\cap(a,c)\not=\emptyset$ for each $c\in(a,b)$ and pick some $F\in\bigcap_{c\in\Sigma} B^\circ(c)$. Then for each $\zeta\in\C$ we have
 \begin{align*}
  |F(\zeta)| & \leq \dbspr{F}{K(\zeta,\cdot\,,c)}_{B(c)} 
             \leq \|F\|_{B(c)} \dbspr{K(\zeta,\cdot\,,c)}{K(\zeta,\cdot\,,c)}_{B(c)} \\
            & \leq    \|F\|_{\mu} K(\zeta,\zeta,c)
\end{align*} 
for each $c\in\supp(\varrho)$.
Now from~\eqref{eqnLDrepkernel} we infer that $K(\zeta,\zeta,c)\rightarrow 0$ as $c\rightarrow a$ and hence that $F=0$.
 Otherwise, if $\alpha_\varrho=\inf\supp(\varrho)>a$, then the subspace  
 \begin{align*}
 D^\circ(\alpha_\varrho) = \lbrace f\in D(\alpha_\varrho) \,|\, f(\alpha_\varrho)=0 \rbrace,
 \end{align*} 
 corresponding to $B^\circ(\alpha_\varrho)$, is at most one-dimensional. 
 In fact, this is because each function $\phi_z|_{(a,\alpha_\varrho)}$, $z\in\C$ is a solution of $\tau u=0$ on $(a,\alpha_\varrho)$ in this case.
 Consequently, the functions in $D^\circ(\alpha_\varrho)$ are also solutions of $\tau u=0$ on $(a,\alpha_\varrho)$.
 Moreover, if $\varsigma+\chi$ is infinite near $a$, then each $f\in D^\circ(\alpha_\varrho)$ is a scalar multiple of $w_a$ on $(a,\alpha_\varrho)$ with $f(\alpha_\varrho)=0$ and hence vanishes identically.
 Also if $\varsigma+\chi$ is finite near $a$ and there are Neumann boundary conditions at $a$, one sees that $f$ is a scalar multiple of $w_a$ and hence identically zero.
 We conclude that the first equality in the claim holds in these cases.
 Finally, if $\varsigma+\chi$ is finite near $a$ and there are no Neumann boundary conditions at $a$, then $a\in\Sigma$ and hence clearly $B^\circ(a)=\lbrace 0\rbrace$.
 For the second equality note that the linear span of functions $z\mapsto\phi_z(c)$, $c\in\Sigma$ is dense in $\Lrmu$ in view of Lemma~\ref{lemLDdomsSigma} and Lemma~\ref{lemLDspectrans}.
 \end{proof}

\begin{remark} 
 At this point let us mention that a real entire solution $\phi_z$, $z\in\C$ as in this section is not unique. In fact, any other such solution is given by 
 \begin{align*}
   \tilde{\phi}_z = \E^{g(z)} \phi_z, \quad z\in\C
 \end{align*}
 for some real entire function $g$. 
%
%
 The corresponding spectral measures are related by
\begin{align*}
 \tilde{\mu}(B) = \int_B \E^{-2g(\lambda)} d\mu(\lambda)
\end{align*}
for each Borel set $B\subseteq\R$. 
 In particular, the measures are mutually absolutely continuous and the associated spectral transforms just differ by a simple rescaling with a positive function.
 Moreover, from Theorem~\ref{thmLDdBspaces} it is easily seen that for each $c\in(a,b)$, multiplication with the entire function $\E^{-g}$ maps $B(c)$ isometrically onto the corresponding de Branges space $\tilde{B}(c)$.
\end{remark}

\section{Inverse uniqueness results}\label{secLDuniqeness}

 The present section is devoted to our inverse uniqueness result.
 We will prove that the spectral measure determines a left-definite Sturm--Liouville operator up to some Liouville transformation (see e.g.\ \cite{bebrwe} or~\cite{ben1} for the right-definite case).
 Therefore let $S_1$ and $S_2$ be two self-adjoint left-definite Sturm--Liouville relations (with separated boundary conditions), both satisfying the assumptions made in the previous section, i.e.\ zero is not an eigenvalue of $S_1$ and $S_2$ and there are real entire solutions satisfying the boundary condition at the left endpoint. Moreover, again we assume that the measures $\varsigma_1$ and $\varsigma_2$ are absolutely continuous with respect to the Lebesgue measure.
 All remaining quantities corresponding to $S_1$ respectively $S_2$ are denoted with an additional subscript.

We will first state a part of the proof of our inverse uniqueness result as a separate lemma.
Note that the equality in the claim of this lemma has to be read as sets of entire functions and not as de Branges spaces.
In general the norms of these spaces will differ from each other.

\begin{lemma}\label{lemLDeta}
Suppose that the function 
\begin{align}\label{eqnLDcondboundtype}
 \frac{E_1(z,x_1)}{E_2(z,x_2)}, \quad z\in\C^+
\end{align}
is of bounded type for some $x_1\in\Sigma_1$ and $x_2\in\Sigma_2$. 
If $\mu_1 = \mu_2$, then there is an increasing continuous bijection $\eta$ from $\Sigma_1$ onto $\Sigma_2$ such that 
\begin{align*}
B_1(x_1)=B_2(\eta(x_1)), \quad x_1\in\Sigma_1.
\end{align*}
\end{lemma}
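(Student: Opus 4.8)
The plan is to use de Branges' subspace ordering theorem (from the appendix) to compare the two chains of de Branges spaces $\{B_1(x_1)\}_{x_1 \in \Sigma_1}$ and $\{B_2(x_2)\}_{x_2 \in \Sigma_2}$ inside the \emph{common} Hilbert space $\Lrmu$, where $\mu = \mu_1 = \mu_2$. The key structural input is Theorem~\ref{thmLDdBemb}, which says that for each $c \in \Sigma_i$ the de Branges space $B_i(c)$ is a closed subspace of $\Lrmu$ (with norm equivalent to the $\mu$-norm), and Propositions~\ref{propLDdBincl}, \ref{propLDcontinuous}, and \ref{propLDemptydense}, which together assert that each chain is totally ordered, strictly increasing, continuous (in the sense of \eqref{eqnLDdBcont}), and exhausts $\Lrmu$ while shrinking to $\{0\}$ at the left. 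So both chains realize $\Lrmu$ as a continuously increasing exhausting family of de Branges subspaces sharing the same ambient inner product.

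First I would invoke the subspace ordering theorem: given the bounded-type hypothesis on the quotient \eqref{eqnLDcondboundtype} for the single pair $(x_1,x_2)$, de Branges' theorem guarantees that any two de Branges subspaces of $\Lrmu$ drawn from the two chains are \emph{comparable}, i.e.\ one is isometrically contained in the other as sets of entire functions. (The bounded-type condition is exactly the hypothesis needed to rule out the pathological incomparable case in de Branges' theorem; it only needs to be checked for one pair because boundedness of type propagates along each ordered chain.) This gives a total order on the union of the two families. Next I would define $\eta$: for each $x_1 \in \Sigma_1$, comparability forces $B_1(x_1)$ to sit inside the chain $\{B_2(x_2)\}$, and I would set $\eta(x_1)$ to be the value $x_2 \in \Sigma_2$ for which $B_1(x_1) = B_2(x_2)$, provided such a value exists exactly; where it does not coincide with an existing space I would use the continuity from Proposition~\ref{propLDcontinuous} to pin down $\eta(x_1)$ as an infimum/supremum over the $\Sigma_2$-chain, so that $B_1(x_1)$ and $B_2(\eta(x_1))$ agree as sets of entire functions.

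The remaining work is to verify that $\eta$ is a well-defined, increasing, continuous bijection from $\Sigma_1$ onto $\Sigma_2$. Monotonicity is immediate from strict monotonicity of both chains (Proposition~\ref{propLDdBincl}): if $x_1 < x_1'$ then $B_1(x_1) \subsetneq B_1(x_1')$, which forces $\eta(x_1) < \eta(x_1')$ after matching into the second chain. Surjectivity and injectivity follow from strictness together with the exhaustion/shrinking at the two ends from Proposition~\ref{propLDemptydense}: $\bigcap_c B^\circ(c) = \{0\}$ and $\overline{\bigcup_c B(c)} = \Lrmu$ hold for \emph{both} chains, so the ranges match up at both endpoints and no space in either chain is left unpaired. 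Continuity of $\eta$ is where the continuity statement \eqref{eqnLDdBcont} does the real work: it ensures the matching cannot skip a positive interval of the parameter, i.e.\ the chains have no simultaneous jumps of dimension, so $\eta$ carries limits to limits.

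The main obstacle I anticipate is the careful bookkeeping at the finitely many points where the two chains may fail to match exactly in dimension. By Proposition~\ref{propLDdBincl}, $B(c_1)$ has codimension one in $B(c_2)$ precisely when $|\varrho|((c_1,c_2)) = 0$, and because $\varsigma$ is absolutely continuous the chains are otherwise continuous; but the two problems $S_1$ and $S_2$ may have their codimension-one jumps at different parameter values. I would need to argue that the isometric set-equalities forced by de Branges' theorem, combined with the continuity of both chains, still align these jumps so that $\eta$ remains a genuine bijection and continuous rather than merely monotone. Handling these jump points—and confirming that the matching is by \emph{set} equality of entire functions (as the lemma statement is careful to emphasize), not by norm-preserving identity—will be the delicate part of the argument.
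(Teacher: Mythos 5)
Your proposal follows essentially the same route as the paper's proof: comparability of the two chains via the ordering theorem (Theorem~\ref{thmLDdBordpitt}) combined with the embedding Theorem~\ref{thmLDdBemb}, then construction of $\eta$ by matching $B_1(x_1)$ into the second chain through suprema/infima (the paper's sets $J_\pm$), with monotonicity, bijectivity and continuity supplied by Propositions~\ref{propLDdBincl}, \ref{propLDemptydense} and \ref{propLDcontinuous}. Two small points of difference: the containments the ordering theorem yields are set inclusions, not isometric ones (the embeddings are only homeomorphic, which is exactly why the paper weakens de Branges' hypothesis), a slip you correct yourself at the end; and the paper disposes of the case where $\Sigma_1$ is finite separately by a dimension count before running the limiting argument.
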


\begin{proof}
 First of all note that by the definition of de Branges spaces and Proposition~\ref{propLDdBincl} the function in~\eqref{eqnLDcondboundtype} is of bounded type for all $x_1\in\Sigma_1$ and $x_2\in\Sigma_2$. 
 We will first consider the case when $\Sigma_1$ consists of finitely many (strictly increasing) points $x_{1,n}$, $n=1,\ldots,N$ separately. 
 In this case $\mu_1=\mu_2$ is supported on $N$ points, since $\mathcal{F}_1$ is a unitary map from $\D_1$ onto $L^2(\R;\mu_1)$. 
 Hence, $\Sigma_2$ also consists of finitely many (strictly increasing) points $x_{2,n}$, $n=1,\ldots,N$. 
 Now let $\eta$ be the unique strictly increasing bijection from $\Sigma_1$ onto $\Sigma_2$, i.e.\ $\eta(x_{1,n})=x_{2,n}$, $n=1,\ldots,N$.
 Using the properties of our de Branges spaces it is quite simple to see that 
 \begin{align*}
  \dim B_1(x_{1,n}) = \dim B_2(x_{2,n}) = n, \quad n=1,\ldots,N,
 \end{align*}
 and therefore the claim follows from Theorem~\ref{thmLDdBemb} and Theorem~\ref{thmLDdBordpitt}.
 
 Now suppose that $\Sigma_1$ consists of infinitely many points and fix some arbitrary $x_1\in\Sigma_1\backslash\lbrace \inf\Sigma_1,\sup\Sigma_1\rbrace$. Then from Theorem~\ref{thmLDdBemb} and Theorem~\ref{thmLDdBordpitt} we infer that for each $x_2\in\Sigma_2$ either $B_1(x_1)\subseteq B_2(x_2)$ or $B_1(x_1)\supseteq B_2(x_2)$ and hence also $B_1^\circ(x_1)\subseteq B_2^\circ(x_2)$ or $B_1^\circ(x_1)\supseteq B_2(x_2)$.
  In order to define $\eta(x_1)\in(a_2,b_2)$ we are first going to show that both of the sets
\begin{align*}
 J_- & = \lbrace x_2\in\Sigma_2 \,|\, B_2(x_2) \subsetneq B_1(x_1) \rbrace, \\
 J_+ & = \lbrace x_2\in\Sigma_2 \,|\, B_1(x_1) \subsetneq B_2(x_2) \rbrace,
\end{align*} 
 are non-empty. Indeed, if $J_-$ was empty, then $B_1^\circ(x_1)\subseteq B_2^\circ(x_2)$ for each $x_2\in\Sigma_2$ and hence
 \begin{align*}
  B_1^\circ(x_1) \subseteq \bigcap_{x_2\in\Sigma_2} B_2^\circ(x_2) = \lbrace 0\rbrace,
 \end{align*}
 in view of Proposition~\ref{propLDemptydense}. 
 Thus we obtained the contradiction $x_1=\inf\Sigma_1$, since otherwise there would be some $\tilde{x}_1\in\Sigma_1$ with $\tilde{x}_1<x_1$ such that $B_1(\tilde{x}_1)\subsetneq B_1(x_1)$.
 Furthermore, if $J_+$ was empty, then $B_2(x_2) \subseteq B_1(x_1)$ for each $x_2\in\Sigma_2$ and hence
 \begin{align*}
  L^2(\R;\mu_1) = \overline{\bigcup_{x_2\in\Sigma_2} B_2(x_2)} \subseteq B_1(x_1) \subseteq L^2(\R;\mu_1).
 \end{align*}
 But from this we infer the contradiction $x_1=\sup\Sigma_1$, since otherwise there would be an $\tilde{x}_1\in\Sigma_1$ with $\tilde{x}_1>x_1$ such that $B_1(x_1) \subsetneq B_1(\tilde{x}_1) \subseteq L^2(\R;\mu_1)$.
  Hence we showed that $J_-$ and $J_+$ are non-empty. 
  Now, if $J_-=\lbrace a_2\rbrace$ then the space $B_2(\alpha_{\varrho_2})$ is two-dimensional and $\alpha_{\varrho_2}$ does not lie in $J_+$ since otherwise
 \begin{align*}
  B_2(a_2) \subsetneq B_1(x_1) \subsetneq B_2(\alpha_{\varrho_2}).
 \end{align*}
 Thus in this case we may set $\eta(x_1)=\alpha_{\varrho_2}$ and obtain $B_1(x_1)=B_2(\eta(x_1))$. 
 Furthermore, if $J_+=\lbrace b_2\rbrace$ then the space $B_2(\beta_{\varrho_2})$ has codimension one in $L^2(\R;\mu)$ and $\beta_{\varrho_2}$ does not lie in $J_-$ since otherwise
 \begin{align*}
  B_2(\beta_{\varrho_2}) \subsetneq B_1(x_1) \subsetneq B_2(b_2).
 \end{align*}
 Again, we may define $\eta(x_1)=\beta_{\varrho_2}$ and get $B_1(x_1)=B_2(\eta(x_1))$. 
 Now in the remaining cases $J_-$ is bounded from above in $(a_2,b_2)$ with supremum
  \begin{align*}
   \eta_-(x_1) = \sup J_- \in(a_2,b_2),
  \end{align*}
 and $J_+$ is bounded from below in $(a_2,b_2)$ with infimum
  \begin{align*}
   \eta_+(x_1) = \inf J_+ \in(a_2,b_2).
  \end{align*}
 Moreover, we have $\eta_\pm(x_1)\in\supp(\varrho_2)$ since $J_\pm\backslash\lbrace a_2,b_2\rbrace$ is contained in $\supp(\varrho_2)$.
 Now Proposition~\ref{propLDcontinuous} shows that
 \begin{align*}
  B_2(\eta_-(x_1)) \subseteq B_1(x_1) \subseteq B_2(\eta_+(x_1)).
 \end{align*}
 If $B_1(x_1)=B_2(\eta_-(x_1))$, set $\eta(x_1)=\eta_-(x_1)$ and if $B_1(x_1)=B_2(\eta_+(x_1))$, set $\eta(x_1)=\eta_+(x_1)$ to obtain $B_1(x_1)=B_2(\eta(x_1))$. Otherwise we have
  \begin{align*}
  B_2(\eta_-(x_1)) \subsetneq B_1(x_1) \subsetneq B_2(\eta_+(x_1)),
 \end{align*}
 and hence $\supp(\varrho_2)\cap(\eta_-(x_1),\eta_+(x_1))\not=\emptyset$ in view of Proposition~\ref{propLDdBincl}. Now we may choose $\eta(x_1)$ in this intersection and get $B_1(x_1)=B_2(\eta(x_1))$ since $\eta(x_1)$ neither lies in $J_-$ nor in $J_+$.

 Up to now we constructed a function $\eta: \Sigma_1\backslash\lbrace\inf\Sigma_1,\sup\Sigma_1\rbrace \rightarrow \Sigma_2$ such that $B_1(x_1) = B_2(\eta(x_1))$ for each $x_1\in\Sigma_1\backslash\lbrace\inf\Sigma_1,\sup\Sigma_1\rbrace$.
  Now if $\inf\Sigma_1$ lies in $\Sigma_1$ and we set $x_1=\inf\Sigma_1\backslash\lbrace\inf\Sigma_1\rbrace$, then $B_1(x_1) = B_2(\eta(x_1))$ is two- dimensional and from Proposition~\ref{propLDemptydense} we infer that there is an $x_2\in\Sigma_2$ with
  \begin{align*}
   \lbrace 0\rbrace \subsetneq B_2(x_2) \subsetneq B_2(\eta(x_1)) = B_1(x_1).
  \end{align*}
  Hence we may set $\eta(\inf\Sigma_1)=x_2$ and obtain $B_1(\inf\Sigma_1) = B_2(\eta(\inf\Sigma_1))$.
  Similarly, if $\sup\Sigma_1$ lies in $\Sigma_1$ and we set $x_1=\sup\Sigma_1\backslash\lbrace\sup\Sigma_1\rbrace$, then the space  $B_1(x_1) = B_2(\eta(x_1))$ has codimension one in $B_1(\sup\Sigma_1) = L^2(\R;\mu_1)$. But because of Proposition~\ref{propLDemptydense} there is an $x_2\in\Sigma_2$ such that 
  \begin{align*}
   B_2(\eta(\beta_{\varrho_1})) \subsetneq B_2(x_2) \subseteq  L^2(\R;\mu_1).
  \end{align*}
  Again, we may define $\eta(\sup\Sigma_1)=x_2$ and get $B_1(\sup\Sigma_1) = B_2(\eta(\sup\Sigma_1))$. 
  Thus, we extended our function $\eta$ to all of $\Sigma_1$ and are left to prove the remaining claimed properties.
  
  The fact that $\eta$ is increasing is a simple consequence of Proposition~\ref{propLDdBincl}. Now if $x_2\in\Sigma_2$, then the first part of the proof with the roles of $\Sigma_1$ and $\Sigma_2$ reversed shows that there is an $x_1\in\Sigma_1$ with $B_1(x_1) = B_2(x_2) = B_1(\eta(x_1))$. 
   In view of Proposition~\ref{propLDdBincl} this yields $\eta(x_1)=x_2$ and hence $\eta$ is a bijection. Finally, continuity follows from Proposition~\ref{propLDcontinuous}. Indeed, if $c$, $c_n\in\Sigma_1$, $n\in\N$ such that $c_n\uparrow c$ as $n\rightarrow\infty$, then
 \begin{align*}
  B_2\left(\lim_{n\rightarrow\infty} \eta(c_n)\right) = \overline{\bigcup_{n\in\N} B_2(\eta(c_n))} = \overline{\bigcup_{n\in\N} B_1(c_n)} = B_1(c) = B_2(\eta(c))
 \end{align*}
 and hence $\eta(c_n)\rightarrow \eta(c)$ as $n\rightarrow\infty$. Similarly, if $c_n\downarrow c$ as $n\rightarrow\infty$, then
  \begin{align*}
  B_2\left(\lim_{n\rightarrow\infty} \eta(c_n)\right) = \bigcap_{n\in\N} B_2(\eta(c_n)) = \bigcap_{n\in\N} B_1(c_n) = B_1(c) = B_2(\eta(c))
 \end{align*}
 and hence again $\eta(c_n)\rightarrow \eta(c)$ as $n\rightarrow\infty$.
\end{proof}

Note that the condition that the function in~\eqref{eqnLDcondboundtype} is of bounded type is actually equivalent to the function
\begin{align*}
 \frac{\phi_{1,z}(x_1)}{\phi_{2,z}(x_2)}, \quad z\in\C^+
\end{align*} 
being of bounded type for some $x_1\in\Sigma_1$ and $x_2\in\Sigma_2$. 
Unfortunately, these conditions are somewhat inconvenient in view of applications. 
However, note that this assumption is for example fulfilled if for some $x_1\in\Sigma_1$ and $x_2\in\Sigma_2$ the entire functions $z\mapsto\phi_{1,z}(x_1)$ and  $z\mapsto\phi_{2,z}(x_2)$ are of finite exponential type such that the logarithmic integrals
\begin{align*}
 \int_\R \frac{\ln^+|\phi_{j,\lambda}(x_j)|}{1+\lambda^2} d\lambda < \infty, \quad j=1,2
\end{align*}
are finite. Here $\ln^+$ is the positive part of the natural logarithm.
Indeed, a theorem of Krein~\cite[Theorem~6.17]{lev}, \cite[Section~16.1]{rosrov} states that in this case the functions $z\mapsto\phi_{j,z}(x_j)$, $j=1,2$ (and hence also their quotient) are of bounded type in the upper and in the lower complex half-plane. Moreover, note that the conclusion of Lemma~\ref{lemLDeta} is also true if for some (and hence all) $x_1\in(a_1,b_1)$ and $x_2\in(a_2,b_2)$ the functions $E_1(\,\cdot\,,x_1)$, $E_2(\,\cdot\,,x_2)$ are of exponential type zero, i.e. 
\begin{align*}
 \ln^+ |E_j(z,x_j)| = \oo(|z|), \quad j=1,2
\end{align*}
as $|z|\rightarrow\infty$ in $\C$. 
 The proof therefore is literally the same, except that one has to apply Theorem~\ref{thmLDdBordkotani} instead of Theorem~\ref{thmLDdBordpitt}.

With all the work done in Lemma~\ref{lemLDeta} it is now quite simple to show that the spectral measure determines our self-adjoint Sturm--Liouville relation up to a Liouville transform. Here, a Liouville transform $\mathcal{L}$ is a unitary map from $\D_2$ onto $\D_1$ given by
\begin{align}
 \mathcal{L} f_2(x_1) = \kappa(x_1) f_2(\eta(x_1)), \quad x_1\in\Sigma_1,~ f_2\in\D_2,
\end{align}
where $\eta$ is an increasing continuous bijection from $\Sigma_1$ onto $\Sigma_2$ and $\kappa$ is a non-vanishing real function on $\Sigma_1$.
We say that the Liouville transform $\mathcal{L}$ maps $S_1$ onto $S_2$ if
\begin{align*}
  S_2 = \mathcal{L}^\ast S_1 \mathcal{L},
\end{align*}
where $\mathcal{L}^\ast$ is the adjoint of $\mathcal{L}$ regarded as a linear relation in $H^1(a_2,b_2)\times H^1(a_1,b_1)$. 
Note that in this case the operator parts of $S_1$, $S_2$ are unitarily equivalent in view of this Liouville transform $\mathcal{L}$.

\begin{theorem}\label{thmLDuniq}
Suppose that the function
\begin{align*}
  \frac{E_1(z,x_1)}{E_2(z,x_2)}, \quad z\in\C^+
\end{align*}
is of bounded type for some $x_1\in\Sigma_1$ and $x_2\in\Sigma_2$. 
If $\mu_1 = \mu_2$, then there is a Liouville transform $\mathcal{L}$ mapping $S_1$ onto $S_2$.
\end{theorem}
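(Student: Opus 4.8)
The plan is to build the Liouville transform directly out of the bijection $\eta$ already furnished by Lemma~\ref{lemLDeta}. Under the present hypotheses that lemma supplies an increasing continuous bijection $\eta\colon\Sigma_1\to\Sigma_2$ with $B_1(x_1)=B_2(\eta(x_1))$, as sets of entire functions, for every $x_1\in\Sigma_1$. The first step is to upgrade this equality of spaces to a pointwise relation between the generating solutions. Since $B_1(x_1)=B_2(\eta(x_1))$ as sets, the subspaces $B_1^\circ(x_1)$ and $B_2^\circ(\eta(x_1))$ of functions vanishing at $0$ coincide as well, and hence so does their common one-dimensional orthogonal complement, taken inside the shared space with respect to the inner product of $\Lrmu$ via the orthogonality-preserving embedding of Theorem~\ref{thmLDdBemb}. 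By the discussion following~\eqref{eqnLDdBkernelzero} this complement is spanned on the $S_1$-side by $z\mapsto\phi_{1,z}(x_1)$ and on the $S_2$-side by $z\mapsto\phi_{2,z}(\eta(x_1))$; both are non-trivial real entire functions, so there is a non-vanishing real constant $\kappa(x_1)$ with
\begin{align*}
 \phi_{1,z}(x_1) = \kappa(x_1)\,\phi_{2,z}(\eta(x_1)), \quad z\in\C.
\end{align*}

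Next I would set $\mathcal{L}f_2(x_1)=\kappa(x_1)f_2(\eta(x_1))$ for $f_2\in\D_2$, $x_1\in\Sigma_1$, and identify this map with $\mathcal{F}_1^{-1}\mathcal{F}_2$. Because each $\mathcal{F}_j$ is unitary from $\D_j$ onto $\Lrmu$ (recall $\mu(\lbrace 0\rbrace)=0$) and $\mathcal{F}_j\delta_c(\lambda)=\phi_{j,\lambda}(c)$, the displayed proportionality reads $\mathcal{F}_1\delta_{x_1}^{(1)}=\kappa(x_1)\mathcal{F}_2\delta_{\eta(x_1)}^{(2)}$, so the unitary $U=\mathcal{F}_1^{-1}\mathcal{F}_2\colon\D_2\to\D_1$ satisfies $U\delta_{\eta(x_1)}^{(2)}=\kappa(x_1)^{-1}\delta_{x_1}^{(1)}$. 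Using the reproducing property $f(c)=\spr{f}{\delta_c}$, the unitarity of $U$, and that $\kappa(x_1)$ is real, one computes for $f_2\in\D_2$
\begin{align*}
 Uf_2(x_1)=\spr{Uf_2}{\delta_{x_1}^{(1)}}=\kappa(x_1)\spr{Uf_2}{U\delta_{\eta(x_1)}^{(2)}}=\kappa(x_1)\spr{f_2}{\delta_{\eta(x_1)}^{(2)}}=\kappa(x_1)f_2(\eta(x_1)),
\end{align*}
so $U=\mathcal{L}$. In particular $\mathcal{L}$ is a unitary map from $\D_2$ onto $\D_1$ assembled from an increasing continuous bijection $\eta$ and a non-vanishing real function $\kappa$; that is, it is a Liouville transform.

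Finally, to establish $S_2=\mathcal{L}^\ast S_1\mathcal{L}$ I would combine the intertwining identity $\mathcal{F}_1\mathcal{L}=\mathcal{F}_2$ (immediate from $\mathcal{L}=\mathcal{F}_1^{-1}\mathcal{F}_2$) with the spectral representation $S_j=\mathcal{F}_j^{-1}\M_{\mathrm{id}}\mathcal{F}_j$ of Theorem~\ref{thmLDspecthm}. Formally,
\begin{align*}
 \mathcal{L}^\ast S_1\mathcal{L}=\mathcal{L}^{-1}\mathcal{F}_1^{-1}\M_{\mathrm{id}}\mathcal{F}_1\mathcal{L}=\mathcal{F}_2^{-1}\M_{\mathrm{id}}\mathcal{F}_2=S_2.
\end{align*}
The point requiring care is that $\mathcal{L}$ is only unitary between the closures $\D_2$ and $\D_1$ of the domains, so that $\mathcal{L}$, $\mathcal{L}^\ast$ and the $\mathcal{F}_j^{-1}$ are genuine (multi-valued) linear relations rather than operators. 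Concretely, $\mathcal{L}^\ast$, regarded as a relation in $H^1(a_1,b_1)\times H^1(a_2,b_2)$, has domain all of $H^1(a_1,b_1)$, acts as $\mathcal{L}^{-1}$ on $\D_1$, kills $\D_1^\perp=\mul{S_1}$ in its determinate part, and has multi-valued part exactly $\mul{S_2}=\D_2^\perp$.

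I expect the only genuine work to be verifying that this multi-valued part is reproduced correctly in the composition, so that $\mathcal{L}^\ast S_1\mathcal{L}$ recovers not merely the operator part of $S_2$ but also $\mul{S_2}$; this is the step where relation bookkeeping, rather than any new idea, is the obstacle. All the conceptual content has already been absorbed into Lemma~\ref{lemLDeta} and the spectral theorem, and everything above follows from them together with the computation of the one-dimensional complements in the first paragraph.
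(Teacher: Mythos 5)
Your proposal is correct and takes essentially the same route as the paper's proof: Lemma~\ref{lemLDeta} supplies $\eta$, the orthogonality statement of Theorem~\ref{thmLDdBemb} together with the codimension-one structure of $B^\circ$ gives the proportionality $\phi_{1,z}(x_1)=\kappa(x_1)\phi_{2,z}(\eta(x_1))$, and the Liouville transform is realized as $\mathcal{L}=\mathcal{F}_1^\ast\mathcal{F}_2|_{\D_2}$, intertwined with $S_1$, $S_2$ through Theorem~\ref{thmLDspecthm}. The ``relation bookkeeping'' you defer at the end is exactly what the paper also settles in one line, via $S_2=\mathcal{F}_2^{-1}\M_{\mathrm{id}}\mathcal{F}_2|_{\D_2}=\mathcal{L}^\ast\mathcal{F}_1^{-1}\M_{\mathrm{id}}\mathcal{F}_1\mathcal{L}=\mathcal{L}^\ast S_1\mathcal{L}$, where the multi-valued parts are carried by the relations $\mathcal{F}_j^{-1}$, so your argument is complete in the same sense as the paper's.
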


\begin{proof}
By Lemma~\ref{lemLDeta} there is an increasing continuous bijection $\eta$ from $\Sigma_1$ onto $\Sigma_2$ such that 
$B_1(x_1)=B_2(\eta(x_1))$ and hence also $B_1^\circ(x_1)=B_2^\circ(\eta(x_1))$ for each $x_1\in\Sigma_1$.
According to Theorem~\ref{thmLDdBemb}, for each fixed $x_1\in\Sigma_1$ the entire functions 
\begin{align*}
z\mapsto\phi_{1,z}(x_1) \quad\text{and}\quad z\mapsto\phi_{2,z}(\eta(x_1))
\end{align*}
 are orthogonal to $B_1^\circ(x_1)=B_2^\circ(\eta(x_1))$ in $L^2(\R;\mu_1)$. From this we infer that
\begin{align}\label{eqnLDphiLT}
  \phi_{1,z}(x_1) = \kappa(x_1) \phi_{2,z}(\eta(x_1)), \quad z\in\C
\end{align}
for some $\kappa(x_1)\in\R^\times$ and hence also
\begin{align}\tag{$*$}\label{eqnLDF1F2}
 \mathcal{F}_1 \delta_{1,x_1} = \kappa(x_1) \mathcal{F}_2 \delta_{2,\eta(x_1)}.
\end{align}
Now the linear relation
\begin{align*}
 \mathcal{L} = \mathcal{F}_1^\ast \mathcal{F}_{2}|_{\D_2}
\end{align*}
is a unitary mapping from $\D_2$ onto $\D_1$ by Lemma~\ref{lemLDspectrans} and  moreover, equation~\eqref{eqnLDF1F2} shows that
\begin{align*}
(\delta_{1,x_1}, \kappa(x_1) \delta_{2,\eta(x_1)}) \in \mathcal{L}^\ast = \mathcal{F}_{2}^{-1} \mathcal{F}_1, \quad x_1\in\Sigma_1.
\end{align*}
From this one sees that the transform of some function $f_2\in\D_2$ is given by 
\begin{align*}
 \mathcal{L} f_2(x_1) & = \spr{\mathcal{L} f_2}{\delta_{1,x_1}}_{H^1(a_1,b_1)} = \kappa(x_1) \spr{f_2}{\delta_{2,\eta(x_1)}}_{H^1(a_2,b_2)} \\
                      &  = \kappa(x_1) f_2(\eta(x_1))
\end{align*}
at each point $x_1\in\Sigma_1$. Finally, we conclude that 
\begin{align*}
 S_2 & = \mathcal{F}_{2}^{-1} \M_{\mathrm{id}} \mathcal{F}_{2}|_{\D_2} = \mathcal{F}_{2}^{-1} \mathcal{F}_{1} \mathcal{F}_{1}^{-1} \M_{\mathrm{id}} \mathcal{F}_{1} \mathcal{F}_{1}^\ast \mathcal{F}_{2}|_{\D_2} =  \mathcal{L}^\ast \mathcal{F}_{1}^{-1} \M_{\mathrm{id}} \mathcal{F}_{1} \mathcal{L} \\ & = \mathcal{L}^\ast S_1 \mathcal{L},
\end{align*}
from Theorem~\ref{thmLDspecthm}. 
\end{proof}

We will now show to which extend the spectral measure determines the coefficients.  
For the proof we need a result on the high energy asymptotics of solutions of our differential equation (see e.g.\ \cite[Section~6]{ben2}). Henceforth we will denote with $r_j$, $j=1,2$ the densities of the absolute continuous parts of $\varrho_j$ with respect to the Lebesgue measure and with $p_j^{-1}$, $j=1,2$ the densities of $\varsigma_j$ with respect to the Lebesgue measure.

\begin{lemma}\label{lemLDasympt}
 For each $j=1,2$ and all points $x_j$, $\tilde{x}_j\in(a_j,b_j)$ we have the asymptotics 
 \begin{align*}
  \sqrt{\frac{2}{y}} \ln \frac{|\phi_{j,\I y}(x_j)|}{|\phi_{j,\I y}(\tilde{x}_j)|} \rightarrow  \int_{\tilde{x}_j}^{x_j} \sqrt{\frac{|r_j(x)|}{p_j(x)}} dx,
 \end{align*}
 as $y\rightarrow\infty$ in $\R^+$.
\end{lemma}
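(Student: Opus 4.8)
The plan is to analyze the high-energy (large imaginary $z = \I y$) behavior of the real entire solution $\phi_{j,z}$ by rewriting the measure Sturm--Liouville equation $(\tau_j - z)\phi_{j,z} = 0$ as a first-order system for the pair $(\phi_{j,z}, \phi_{j,z}^\qd)$ and tracking the exponential growth rate of its fundamental matrix. Since $\varsigma_j$ is absolutely continuous with density $p_j^{-1}$, the quasi-derivative relation reads $\phi_{j,z}^\qd = d\phi_{j,z}/d\varsigma_j = p_j\,(d\phi_{j,z}/dx)$ almost everywhere, while the defining relation for $\tau_j$ gives $d\phi_{j,z}^\qd = \phi_{j,z}\,d\chi_j - z\,\phi_{j,z}\,d\varrho_j$. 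First I would set $u = \phi_{j,\I y}$ and consider the nonnegative quantity $\rho_y(x) = |u(x)|^2/p_j(x) + \text{(something)}$; more precisely, I would look for a Prüfer-type or Riccati-type substitution that isolates the dominant balance. The key observation is that for large $|z|$ the term $z\,\varrho_j$ dominates the bounded term $\chi_j$, so the leading behavior is governed by the reduced equation $-(p_j u')' = z\,r_j\,u$ (the absolutely continuous part of $\varrho_j$ controls the exponential rate, while the singular part of $\varrho_j$ contributes only lower-order jumps that do not affect the $\sqrt{2/y}\,\ln(\cdot)$ scaling).

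The natural tool is a WKB / Liouville--Green asymptotic analysis. Introduce the change of variables $\xi = \int^x \sqrt{|r_j|/p_j}\,dt$ and set $v = (p_j |r_j|)^{1/4} u$; under this substitution the reduced equation $-(p_j u')' = \I y\, r_j u$ transforms (up to bounded-coefficient perturbations) into $v'' \approx \pm \I y\, v$ in the variable $\xi$, whose solutions grow like $\exp(\sqrt{y/2}\,(1\pm\I)\,\xi)$ since $\sqrt{\I} = (1+\I)/\sqrt{2}$. Consequently $|u(x)|$ behaves like $\exp\!\big(\sqrt{y/2}\,\int^x \sqrt{|r_j|/p_j}\big)$ to leading order, and taking the logarithm of the ratio and multiplying by $\sqrt{2/y}$ isolates exactly the claimed integral $\int_{\tilde x_j}^{x_j}\sqrt{|r_j|/p_j}\,dx$. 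The sign/modulus issues (where $r_j$ changes sign, $\sqrt{\I r_j}$ picks up $\sqrt{\pm \I}$) only affect the oscillatory phase, not the growth rate, so the absolute values in the statement are precisely what survive.

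I would structure the argument by first reducing to the case $g = 0$ in the initial conditions and invoking the existence/uniqueness Theorem~\ref{thmLDexisuniq} to represent $\phi_{j,\I y}$ via a Volterra-type integral equation against the reduced fundamental solutions. The rigorous backbone would be a Gronwall estimate showing that the full solution (with the genuine measures $\chi_j$ and the singular part of $\varrho_j$) differs from the reduced WKB solution only by factors whose logarithms are $\oo(\sqrt{y})$; this is where one uses that $\chi_j$ is a fixed finite measure and that singular masses of $\varrho_j$ produce multiplicative jumps bounded uniformly in $y$ on compact subintervals of $(a_j,b_j)$. This is essentially the content of \cite[Section~6]{ben2}, which I would cite for the precise asymptotic estimate; the present lemma is the specialization of that high-energy analysis to the ratio of solution values at two interior points.

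The main obstacle I anticipate is controlling the contribution of the singular (non-absolutely-continuous) part of $\varrho_j$ and of the measure $\chi_j$ to the growth rate, and verifying that they do not contaminate the leading $\sqrt{y}$-order term. One must argue carefully that point masses of $\varrho_j$ cause the solution and its quasi-derivative to jump, but that the logarithmic size of each such jump is $O(1)$ (or at worst $O(\sqrt{y})$ with a coefficient that integrates against the singular measure to something negligible after multiplication by $\sqrt{2/y}$), so that in the limit only the absolutely continuous density $r_j$ enters. Handling the possibility that $r_j$ vanishes on sets of positive Lebesgue measure — so that $\sqrt{|r_j|/p_j}$ may vanish and the WKB substitution degenerates — requires treating those subintervals separately, where the solution grows sub-exponentially in $y$ and contributes nothing to the integrated rate; patching these regions together to recover the global integral is the delicate bookkeeping step.
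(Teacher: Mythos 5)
Your proposal is correct and ultimately rests on the same foundation as the paper's proof: both delegate the rigorous high-energy asymptotics to \cite[Section~6]{ben2}, from which the paper reads off $\ln\bigl(|\phi_{j,\I y}(x_j)|/|\phi_{j,\I y}(\tilde{x}_j)|\bigr) = \re\bigl(\int_{\tilde{x}_j}^{x_j}\sqrt{-\I y\, r_j p_j}\,d\varsigma_j\bigr) + \oo(\sqrt{y})$ and then computes the real part of the square root to get $\sqrt{y/2}\int\sqrt{|r_j|/p_j}\,dx$. Your WKB/Liouville--Green sketch and the discussion of the singular part of $\varrho_j$ essentially reconstruct what is inside that reference, so the two arguments coincide in substance.
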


\begin{proof}
 By our assumptions, the Lebesgue decomposition of the measure $\varrho_j$ with respect to $\varsigma_j$ is given by
 \begin{align*}
  \varrho_j= r_j p_j \varsigma_j + \varrho_{j,s},
 \end{align*}
  where $\varrho_{j,s}$ is the singular part of $\varrho_j$ with respect to the Lebesgue measure.
  Now the results in~\cite[Section~6]{ben2} show that (the square root is the principal one with branch cut along the negative real axis)
 \begin{align*}
  \ln\frac{|\phi_{j,\I y}(x_j)|}{|\phi_{j,\I y}(\tilde{x}_j)|} & = \re\left(\int_{\tilde{x}_j}^{x_j} \sqrt{-\I y r_j(x) p_j(x)}d\varsigma_j(x) + \oo(\sqrt{y})\right) \\
   & = \sqrt{\frac{y}{2}} \int_{\tilde{x}_j}^{x_j} \sqrt{\frac{|r_j(x)|}{p_j(x)}}dx + \oo(\sqrt{y}),
 \end{align*}
 as $y\rightarrow\infty$ in $\R^+$, which yields the claim. 
\end{proof}

We are now able to establish a relation between the measure coefficients. However, this is only possible on sets where the support of the weight measure has enough density. Otherwise there would be to much freedom for the remaining coefficients. 
 
\begin{corollary}\label{corLDuniqcoef}
Let $\alpha_1$, $\beta_1\in(a_1,b_1)$ with $\alpha_1<\beta_1$ such that $r_1\not=0$ almost everywhere on $(\alpha_1,\beta_1)$ and $r_2\not=0$ almost everywhere on $(\eta(\alpha_1),\eta(\beta_1))$ with respect to the Lebesgue measure.
If the function 
\begin{align*}
  \frac{E_1(z,x_1)}{E_2(z,x_2)}, \quad z\in\C^+
\end{align*}
is of bounded type for some $x_1\in(a_1,b_1)$, $x_2\in(a_2,b_2)$ and $\mu_1=\mu_2$, then the functions $\eta$ and $\kappa$ from the Liouville transform of Theorem~\ref{thmLDuniq} satisfy 
 \begin{align*}
  \eta' = \sqrt{\frac{p_2\circ\eta}{p_1} \frac{|r_1|}{|r_2\circ\eta|}} \quad\text{and} \quad \kappa^2 = \sqrt{\frac{p_2\circ\eta}{p_1} \frac{|r_2\circ\eta|}{|r_1|}} 
 \end{align*}
 almost everywhere on $(\alpha_1,\beta_1)$ with respect to the Lebesgue measure and for the measure coefficients we have
 \begin{align*}
  \varsigma_2\circ\eta = \kappa^{-2} \varsigma_1,  \quad \varrho_2\circ\eta = \kappa^2 \varrho_1 
 \quad\text{and}\quad \chi_2\circ\eta = \kappa^2 \chi_1 - \kappa \kappa^{\qd\prime},
 \end{align*}
 as measures on $(\alpha_1,\beta_1)$.
\end{corollary}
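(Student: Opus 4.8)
The plan is to extract everything from the single structural identity produced inside the proof of Theorem~\ref{thmLDuniq}, namely equation~\eqref{eqnLDphiLT},
\begin{align*}
 \phi_{1,z}(x_1)=\kappa(x_1)\,\phi_{2,z}(\eta(x_1)),\qquad z\in\C,~x_1\in\Sigma_1,
\end{align*}
together with the fact that the associated Liouville transform $\mathcal{L}=\mathcal{F}_1^\ast\mathcal{F}_2|_{\D_2}$ is unitary. Since $r_1\neq0$ a.e.\ on $(\alpha_1,\beta_1)$ and $r_2\neq0$ a.e.\ on $(\eta(\alpha_1),\eta(\beta_1))$, both intervals lie in the respective supports of $\varrho_1$, $\varrho_2$, hence (up to at most the points $a_{\varrho_j},b_{\varrho_j}$, which are irrelevant here) in $\Sigma_1$, $\Sigma_2$, so by continuity the identity holds throughout $(\alpha_1,\beta_1)$. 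First I would read off $\eta'$: forming the quotient $|\phi_{1,\I y}(x_1)|/|\phi_{1,\I y}(\tilde{x}_1)|$ and noting that the factor $|\kappa(x_1)|/|\kappa(\tilde{x}_1)|$ is a constant and hence subexponential, the $\sqrt{2/y}\,\ln(\cdot)$ limit of Lemma~\ref{lemLDasympt} applied on both sides together with the change of variables $x\mapsto\eta(x)$ yields
\begin{align*}
 \int_{\tilde{x}_1}^{x_1}\sqrt{\frac{|r_1|}{p_1}}\,dx=\int_{\eta(\tilde{x}_1)}^{\eta(x_1)}\sqrt{\frac{|r_2|}{p_2}}\,dx=\int_{\tilde{x}_1}^{x_1}\sqrt{\frac{|r_2\circ\eta|}{p_2\circ\eta}}\,\eta'\,dx.
\end{align*}
Because the outer left-hand integral defines an absolutely continuous strictly increasing function and $r_2\neq0$ a.e., $\eta$ is locally absolutely continuous on $(\alpha_1,\beta_1)$, and differentiating the identity gives the claimed formula for $\eta'$ a.e.

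Next I would fix the normalization. Writing $\sigma:=d(\varsigma_2\circ\eta)/d\varsigma_1=p_1\eta'/(p_2\circ\eta)$ and using $d/d\varsigma_1=p_1\,d/dx$ on $(\alpha_1,\beta_1)$, the chain rule gives $(\mathcal{L}f_2)^\qd=p_1\kappa'\,(f_2\circ\eta)+\kappa\sigma\,(f_2^\qd\circ\eta)$. Inserting this into the isometry $\|\mathcal{L}f_2\|_{H^1(a_1,b_1)}^2=\|f_2\|_{H^1(a_2,b_2)}^2$ (valid for $f_2\in\D_2$ by Lemma~\ref{lemLDspectrans}) and comparing the contributions carrying $|f_2^\qd\circ\eta|^2$ — the only top-order term, entering solely through the $\varsigma$-integrals — forces $\kappa^2\sigma=1$, that is $\varsigma_2\circ\eta=\kappa^{-2}\varsigma_1$; equivalently, this is the statement that $\mathcal{L}$ preserves the modified Wronskian normalization $V(\theta,\phi)=1$, which is what determines the constant to be exactly one. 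Substituting $\sigma=\kappa^{-2}$ into $\sigma=p_1\eta'/(p_2\circ\eta)$ and using the formula for $\eta'$ immediately gives $\kappa^2=\sqrt{(p_2\circ\eta)/p_1\cdot|r_2\circ\eta|/|r_1|}$.

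Finally, for the measure coefficients I would substitute $\phi_{1,z}=\kappa\,(\phi_{2,z}\circ\eta)$ into $d\phi_{1,z}^\qd=\phi_{1,z}(d\chi_1-z\,d\varrho_1)$. Using $\varsigma_2\circ\eta=\kappa^{-2}\varsigma_1$ to simplify $\phi_{1,z}^\qd=\kappa^{-1}(\phi_{2,z}^\qd\circ\eta)+\kappa^\qd(\phi_{2,z}\circ\eta)$, pulling back the equation satisfied by $\phi_{2,z}$, and expanding by the Leibniz rule, the terms proportional to $\phi_{2,z}^\qd\circ\eta$ cancel (this is precisely where $\kappa^2\sigma=1$ is needed), leaving an identity of measures whose two sides are combinations of $\phi_{2,z}\circ\eta$ and $\phi_{2,z}^\qd\circ\eta$. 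Since these are linearly independent as functions of $z$, the coefficient measures match separately: the $z$-linear part gives $\varrho_2\circ\eta=\kappa^2\varrho_1$ and the $z$-independent part gives $\chi_2\circ\eta=\kappa^2\chi_1-\kappa\,\kappa^{\qd\prime}$, the correction term arising from differentiating the factor $\kappa$.

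The hard part will be making the measure-valued manipulations rigorous: justifying the product and integration-by-parts rules for $\kappa\,(\phi_{2,z}\circ\eta)$ when $\varrho_1$ and $\chi_1$ may be genuinely singular while $\varsigma_1$ is absolutely continuous, controlling the cross terms in the isometry computation so that the $|f_2^\qd\circ\eta|^2$-coefficient can be isolated (which I would do by testing against functions in $\D_2$ whose quasi-derivative dominates), and legitimizing the separation of the $z$-linear and $z$-independent parts through the independence of $\phi_{2,z}$ and $\phi_{2,z}^\qd$ in $z$. A secondary, easily overlooked point is the sign bookkeeping: the formulas for $\eta'$ and $\kappa^2$ involve only $|r_j|$, so one must verify along the way that $\sgn(r_2\circ\eta)=\sgn(r_1)$ a.e., which is exactly what renders $\varrho_2\circ\eta=\kappa^2\varrho_1$ compatible with $\kappa^2>0$.
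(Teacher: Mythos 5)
Your overall skeleton---high-energy asymptotics to get $\eta'$, a normalization step to pin down $\kappa$, then substitution of $\phi_{1,z}=\kappa\,(\phi_{2,z}\circ\eta)$ into the differential equation to compare the measures---is the same as the paper's, and your first and last steps essentially coincide with it (the paper justifies the local absolute continuity of $\eta$ via the Banach--Zarecki\u{\i} theorem, and it separates the $z$-independent and $z$-linear parts exactly as you do, by first setting $z=0$ and then using that for each $x_1$ one has $\phi_{1,z}(x_1)\not=0$ for some $z$). The genuine gap is in the middle step, which is the crux. Your derivation of $\varsigma_2\circ\eta=\kappa^{-2}\varsigma_1$ from the global isometry $\|\mathcal{L}f_2\|_{H^1(a_1,b_1)}=\|f_2\|_{H^1(a_2,b_2)}$ is not a proof: (i) the isometry is a single integral identity over the whole intervals, while the expansion $(\mathcal{L}f_2)^\qd=p_1\kappa'\,(f_2\circ\eta)+\kappa\sigma\,(f_2^\qd\circ\eta)$ you insert into it is only available on $(\alpha_1,\beta_1)$, so integrands cannot be compared without first localizing; (ii) elements of $\D_2$ are limits of linear combinations of the functions $\delta_{2,c}$, none of which has compact support, so ``testing against functions whose quasi-derivative dominates'' requires a construction you do not give; and (iii), most importantly, that expansion already presupposes that $\kappa$ is differentiable (indeed locally absolutely continuous), which you never establish---yet the statement of the corollary itself involves the measure $\kappa\kappa^{\qd\prime}$, so proving this regularity of $\kappa$ is part of proving the corollary, and without it your final Leibniz-rule computation for the measures is likewise unjustified.

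The paper obtains both missing items by a pointwise argument that bypasses the isometry-localization problem entirely: from $\mathcal{F}_1\delta_{1,x_1}=\kappa(x_1)\mathcal{F}_2\delta_{2,\eta(x_1)}$ and $\mu_1=\mu_2$ one gets $\kappa(x_1)^2=\|\delta_{1,x_1}\|^2/\|\delta_{2,\eta(x_1)}\|^2$, and these norms are explicitly $\|\delta_{j,x}\|^2=w_{j,a}(x)w_{j,b}(x)/W(w_{j,b},w_{j,a})$. Combining this with $w_{1,a}=C_a\,\kappa\,(w_{2,a}\circ\eta)$ (which follows since $\phi_{j,0}$ is a multiple of $w_{j,a}$) yields $w_{1,b}=C\,\kappa\,(w_{2,b}\circ\eta)$ for a constant $C$; the local absolute continuity of $\kappa$ is then extracted from this identity (a bounded-variation/substitution-rule argument applied to $x_1\mapsto w_{2,b}(\eta(x_1))$, again via Banach--Zarecki\u{\i}), and plugging both relations into the Wronskian $W(w_{1,b},w_{1,a})$ gives $1=\kappa^2\eta'\,p_1/(p_2\circ\eta)$ pointwise a.e., which is precisely your normalization $\kappa^2\sigma=1$. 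So the conclusion you want from the middle step is correct, but the route you propose for it runs into a real obstruction, and the mechanism the paper uses---the solutions $w_{j,a}$, $w_{j,b}$, the explicit $\delta$-norms, and their Wronskian---is what your argument is missing.
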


\begin{proof}
 From equation~\eqref{eqnLDphiLT} and the asymptotics in Lemma~\ref{lemLDasympt} we infer that
 \begin{align*}
  \int_{\tilde{x}_1}^{x_1} \sqrt{\frac{|r_1(x)|}{p_1(x)}}dx = \int_{\eta(\tilde{x}_1)}^{\eta(x_1)} \sqrt{\frac{|r_2(x)|}{p_2(x)}} dx, \quad x_1,\,\tilde{x}_1\in(\alpha_1,\beta_1).
 \end{align*} 
 In view of the Banach--Zarecki\u{\i} theorem (see e.g.\ \cite[Chapter~IX; Theorem~4]{nat}, \cite[Theorem~18.25]{hesr})  this shows that $\eta$ is locally absolutely continuous on $(\alpha_1,\beta_1)$ with derivative given as in the claim. More precisely, this follows from an application of~\cite[Chapter~IX; Exercise~13]{nat} and~\cite[Chapter~IX; Theorem~5]{nat}. 
 Furthermore, since $\phi_{1,0}$, $\phi_{2,0}$ are scalar multiples of $w_{1,a}$,  $w_{2,a}$ respectively, we also have
 \begin{align}\tag{$*$}\label{eqnLDlvtranswa}
  w_{1,a}(x_1) = C_a \kappa(x_1) w_{2,a}(\eta(x_1)), \quad x_1\in(\alpha_1,\beta_1)
 \end{align}
 for some constant $C_a\in\R^\times$. In particular, this shows that $\kappa$ is locally absolutely continuous on $(\alpha_1,\beta_1)$. 
 In fact, the substitution rule for Lebesgue--Stieltjes integrals (see e.g.\ \cite{tsr}) shows that 
 \begin{align*}
  w_{2,b}(\eta(x_1)) - w_{2,b}(\eta(\tilde{x}_1)) & = \int_{\eta(\tilde{x}_1)}^{\eta(x_1)} w_{2,b}^\qd \, d\varsigma_2 \\ 
   &  = \int_{\tilde{x}_1}^{x_1} w_{2,b}^\qd\circ\eta\, d\varsigma_2\circ\eta, \quad x_1,\,\tilde{x}_1\in(\alpha_1,\beta_1) 
 \end{align*}
 and hence the function $x_1\mapsto w_{2,b}(\eta(x_1))$ is locally of bounded variation on $(\alpha_1,\beta_1)$.
 Therefore, from~\cite[Chapter~IX; Theorem~5]{nat} we infer that this function is even locally absolutely continuous on $(\alpha_1,\beta_1)$ and hence so is $\kappa$.
 Moreover, in view of Lemma~\ref{lemLDspectrans}, equation~\eqref{eqnLDphiLT} yields 
 \begin{align*}
  \kappa(x_1)^2 & = \frac{\|\delta_{1,x_1}\|_{H^1(a_1,b_1)}^2}{\|\delta_{2,\eta(x_1)}\|_{H^1(a_2,b_2)}^2} = \frac{W(w_{2,b},w_{2,a})}{W(w_{1,b},w_{1,a})} \frac{w_{1,a}(x_1)w_{1,b}(x_1)}{w_{2,a}(\eta(x_1)) w_{2,b}(\eta(x_1))} 
 \end{align*}
 for each $x_1\in(\alpha_1,\beta_1)$. Inserting~\eqref{eqnLDlvtranswa} we get from this equation 
 \begin{align*}
  w_{1,b}(x_1) = C_a^{-1} \frac{W(w_{1,b},w_{1,a})}{W(w_{2,b},w_{2,a})}  \kappa(x_1)w_{2,b}(\eta(x_1)), \quad x_1\in(\alpha_1,\beta_1).
 \end{align*}
 Plugging this expression and equation~\eqref{eqnLDlvtranswa} into the definition of the Wronskian $W(w_{1,b},w_{1,a})$ one obtains 
 \begin{align*}
  1 =  \frac{\kappa(x_1)^2 \eta'(x_1) p_1(x_1)}{p_2(\eta(x_1))}, \quad x_1\in(\alpha_1,\beta_1),
 \end{align*}
 which shows that $\kappa$ is given as in the claim.
 Next, differentiating equation~\eqref{eqnLDphiLT} yields 
 \begin{align*}
  \kappa(x_1) \phi_{1,z}^\qd(x_1) = \kappa^\qd(x_1) \phi_{1,z}(x_1) + \phi_{2,z}^\qd(\eta(x_1)), \quad x_1\in(\alpha_1,\beta_1)
 \end{align*}
 for each $z\in\C$. From this we get for all $\alpha$, $\beta\in(\alpha_1,\beta_1)$ 
 \begin{align*}
  & \int_\alpha^\beta \phi_{1,z} \kappa\, d\chi_1 - z\int_\alpha^\beta \phi_{1,z} \kappa\, d\varrho_1 = \\ 
   & \quad\quad\quad\quad = \int_\alpha^\beta \phi_{1,z}\, d\kappa^\qd + \int_{\alpha}^{\beta} \phi_{1,z} \kappa^{-1} d\chi_2\circ\eta - z\int_{\alpha}^{\beta} \phi_{1,z} \kappa^{-1} d\varrho_2\circ\eta,
 \end{align*} 
 where we used the integration by parts formula~\eqref{eqnLDpartint}, the differential equation and the substitution rule.
 In particular, choosing $z=0$ this shows that the coefficients $\chi_1$ and $\chi_2$ are related as in the claim (note that $\phi_{1,0}$ does not have any zeros).
 Using this relation, one sees from the previous equation that for each $z\in\C^\times$ and $\alpha$, $\beta\in(\alpha_1,\beta_1)$ we actually have
 \begin{align*}
  \int_\alpha^\beta \phi_{1,z} \kappa\, d\varrho_1 = \int_\alpha^\beta \phi_{1,z} \kappa^{-1} d\varrho_2\circ\eta.
 \end{align*}    
 Now since for each $x_1\in(\alpha_1,\beta_1)$ there is some $z\in\C^\times$ such that $\phi_{1,z}(x_1)\not=0$, this shows that the coefficients $\varrho_1$ and $\varrho_2$ are related as in the claim.
\end{proof}

In particular, note that these relations among our measures show that under the assumptions of Corollary~\ref{corLDuniqcoef}, for every $z\in\C$ and each solution $u_2$ of $(\tau_2-z)u=0$, the function
\begin{align*}
 u_1(x_1) = \kappa(x_1) u_2(\eta(x_1)), \quad x_1\in(\alpha_1,\beta_1)
\end{align*}
is a solution of $(\tau_1-z)u=0$ on $(\alpha_1,\beta_1)$. Moreover, linear independence is preserved under this transformation.

In the remaining part of this section we will prove one more inverse uniqueness result, tailor-made to fit the requirements of the isospectral problem of the Camassa--Holm equation. There, we do not want the measures $\varrho_1$ and $\varrho_2$ to necessarily have dense support; hence we can not apply Corollary~\ref{corLDuniqcoef}. However, we will assume that the intervals and the coefficients on the left-hand side of the differential equation are fixed, i.e.
\begin{align*}
 a:= a_1=a_2,\quad b:= b_1=b_2, \quad \varsigma:=\varsigma_1=\varsigma_2 \quad\text{and}\quad \chi:=\chi_1=\chi_2,
\end{align*}
and that $\tau_1$ and $\tau_2$ are in the l.p.\ case at both endpoints.
Another crucial additional assumption we will make for this inverse uniqueness result is that the norms of point evaluations (note that the modified Sobolev spaces are the same for both relations) $\|\delta_c\|_{\Hab}$ are independent of $c\in(a,b)$.
 For example this is the case when $\varsigma$ and $\chi$ are scalar multiples of the Lebesgue measure, as it is the case for the isospectral problem of the Camassa--Holm equation.
Moreover, we suppose that our real entire solutions $\phi_{1,z}$ and $\phi_{2,z}$ coincide at $z=0$, i.e.
\begin{align}\label{eqnLDphizero}
 \phi_{1,0}(x) = \phi_{2,0}(x), \quad x\in(a,b).
\end{align}
As a consequence of these assumptions, the coefficient of the second term on the right-hand side of~\eqref{eqnLDdBemb} in Theorem~\ref{thmLDdBemb} is the same for both problems. Now the weight measure on the right-hand side of our differential equation is uniquely determined by the spectral measure.
In view of application to the isospectral problem of the Camassa--Holm equation we state this result with the assumption that our de Branges functions are of exponential type zero. Of course the same result holds if their quotient is of bounded type in the upper complex half-plane.

\begin{theorem}
Suppose that $E_1(\,\cdot\,,c)$ and $E_2(\,\cdot\,,c)$ are of exponential type zero for some $c\in(a,b)$. If $\mu_1 = \mu_2$, then we have $\varrho_1=\varrho_2$ and $S_1=S_2$.
\end{theorem}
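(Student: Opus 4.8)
The plan is to reduce everything to the Liouville transform that has already been constructed and then to use the three structural hypotheses ($\varsigma_1=\varsigma_2$ and $\chi_1=\chi_2$; constancy of $\|\delta_c\|_{\Hab}$ in $c$; and $\phi_{1,0}=\phi_{2,0}$) to force that transform to be trivial. First I would invoke the exponential-type-zero analogue of Lemma~\ref{lemLDeta} and Theorem~\ref{thmLDuniq}, whose proofs go through verbatim once Theorem~\ref{thmLDdBordkotani} replaces Theorem~\ref{thmLDdBordpitt}, to obtain an increasing continuous bijection $\eta\colon\Sigma_1\to\Sigma_2$ and a nonvanishing real function $\kappa$ with $\phi_{1,z}(x_1)=\kappa(x_1)\phi_{2,z}(\eta(x_1))$ for all $z\in\C$ and $x_1\in\Sigma_1$, that is, equation~\eqref{eqnLDphiLT}.

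The central step is to show that $\eta=\mathrm{id}$ and $\kappa\equiv 1$. The key observation is that solutions of $\tau u=0$ depend only on $\varsigma$ and $\chi$: indeed $\tau u=0$ means precisely that $-u^\qd+\int u\,d\chi$ is constant, an equation not involving $\varrho$. Since also $\Hab$ and the defining conditions of Proposition~\ref{propLDdecompH1} are determined by $\varsigma$ and $\chi$ alone, the solutions $w_a$, $w_b$ coincide for the two problems, and because $\delta_c$ is invariant under rescaling of $w_a$, $w_b$ we obtain $\delta_{1,c}=\delta_{2,c}=:\delta_c$. As in the proof of Corollary~\ref{corLDuniqcoef}, Lemma~\ref{lemLDspectrans} together with~\eqref{eqnLDphiLT} gives $\kappa(x_1)^2=\|\delta_{x_1}\|_{\Hab}^2/\|\delta_{\eta(x_1)}\|_{\Hab}^2$, which equals $1$ by the constancy hypothesis. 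Evaluating~\eqref{eqnLDphiLT} at $z=0$ and using $\phi_{1,0}=\phi_{2,0}=:\phi_0$ yields $\phi_0(x_1)=\kappa(x_1)\phi_0(\eta(x_1))$; since $\phi_0$ is a positive scalar multiple of $w_a$, which is strictly monotone by~\eqref{eqnKerTmaxInc}, $\kappa$ is positive, whence $\kappa\equiv 1$, and injectivity of $\phi_0$ then forces $\eta=\mathrm{id}$ (in particular $\Sigma_1=\Sigma_2$).

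It remains to upgrade $\phi_{1,z}=\phi_{2,z}$ from $\Sigma_1=\Sigma_2$ to all of $(a,b)$ and to read off $\varrho_1=\varrho_2$. By continuity the two solutions agree on $\supp(\varrho_1)=\overline{\Sigma_1}=\overline{\Sigma_2}=\supp(\varrho_2)$ (the bookkeeping at the removed points $a_\varrho$, $b_\varrho$ is harmless since these are limit points of $\Sigma$ whenever they are removed); on each gap of the support both functions satisfy the same $\varrho$-free equation $-u^\qd+\int u\,d\chi=\text{const}$ and share the boundary values at the two gap endpoints, so they coincide there as well, because the Dirichlet problem for the free equation is uniquely solvable (a nontrivial solution vanishing at both gap endpoints would contradict the linear independence and fixed signs of $w_a$, $w_b$). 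Hence $\phi_{1,z}=\phi_{2,z}=:\phi_z$ on $(a,b)$ for every $z$, so $\phi_z^\qd$ and $x\mapsto\int_c^x\phi_z\,d\chi$ are common. Writing $(\tau_j-z)\phi_z=0$ as $-\phi_z^\qd(x)+\int_c^x\phi_z\,d\chi-\text{const}=z\int_c^x\phi_z\,d\varrho_j$ and subtracting the two identities gives $\int_c^x\phi_z\,d(\varrho_1-\varrho_2)=0$ for all $x$ and all $z\in\C^\times$; since for each point there is a $z$ for which $\phi_z$ is nonvanishing nearby (take $z$ near $0$, where $\phi_z\to\phi_0=w_a\neq 0$), this forces $\varrho_1=\varrho_2$. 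Finally, with $\varsigma$, $\chi$, $\varrho$ all common we have $\tau_1=\tau_2$ and the same maximal relation $\Tmax$; since both endpoints are in the l.p.\ case, Theorem~\ref{thmLDdefind} gives deficiency index zero, so $\Tmin=\Tmax$ is the unique self-adjoint realization and $S_1=S_2$.

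The hard part will be the passage from $\Sigma$ to the full interval in the last paragraph: on the complement of $\supp(\varrho)$ the (entire-in-$z$) solutions are no longer pinned down by the spectral data but only by the common left-hand coefficients, and one must verify that gap endpoints are never conjugate points of the free equation so that the Dirichlet matching is genuinely unique. The remaining computations, namely the $\kappa^2$-identity and the recovery of $\varrho$ from the integrated equation, are routine given the earlier results.
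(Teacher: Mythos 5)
Your first two paragraphs track the paper's proof and are sound: the type-zero ordering theorem (Theorem~\ref{thmLDdBordkotani}) yields $\eta$ and $\kappa$ with $\phi_{1,z}(x_1)=\kappa(x_1)\phi_{2,z}(\eta(x_1))$, the constancy of $\|\delta_c\|_{\Hab}$ gives $\kappa^2\equiv1$, and your route to $\kappa\equiv1$ and $\eta=\mathrm{id}$ (constant sign and injectivity of $\phi_0$, which is a nonvanishing multiple of the strictly monotone $w_a$, using \eqref{eqnLDphizero}) is a legitimate minor variant of the paper's argument via monotonicity of $K(0,0,\cdot\,)=\phi_0^\qd\phi_0$.

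The gap is in your third paragraph. Dirichlet matching only works on gaps $(\alpha,\beta)$ with \emph{both} endpoints in $\supp(\varrho_1)$; it says nothing about the one-sided end regions $(a,\inf\supp(\varrho_1))$ and $(\sup\supp(\varrho_1),b)$ when these are nonempty, so the claim ``$\phi_{1,z}=\phi_{2,z}$ on $(a,b)$ for every $z$'' is unjustified. This is not a removable technicality: a possible point mass $\varrho_j(\lbrace\overline{s}\rbrace)$ at $\overline{s}=\sup\supp(\varrho_1)$ enters the equation only through the jump of $\phi_{j,z}^\qd$ at $\overline{s}$, i.e.\ through $\phi_{j,z}^\qd(\overline{s}+)$, which is determined by the solution on $(\overline{s},b)$ --- exactly the region you do not control; equivalently, your identity $\int_c^x\phi_z\,d(\varrho_1-\varrho_2)=0$ must be used with $x>\overline{s}$ to capture this mass, and then it involves values of $\phi_{j,z}$ beyond the support. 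Your closing remark also misidentifies the difficulty: excluding conjugate points in bounded gaps is the easy part (your $w_a$, $w_b$ argument, or the paper's monotonicity of \eqref{eqnKerTmaxInc}, both work); the real issue is the half-infinite regions. Near $a$ the paper closes this by noting that l.p.\ at $a$ together with $|\varrho_j|$ having no mass near $a$ forces every solution of the $\varrho$-free equation lying in $\Hab$ near $a$ to be a multiple of $w_a$ (otherwise \emph{all} solutions of $(\tau_j-z)u=0$ would lie in $\Hab$ near $a$, contradicting l.p.), so both $\phi_{j,z}$ are multiples of $w_a$ there, pinned by their common value at $\inf\Sigma_1$. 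Near $b$ no condition pins down $\phi_{j,z}$ for arbitrary $z$, and the paper accordingly does \emph{not} prove equality there for all $z$: it first shows that in the case $\sup\Sigma_1<b$ the spectrum is purely discrete (via the real entire solutions equal to $w_b$ near $b$), and then runs the matching and the recovery of $\varrho$ only with eigenvalues $\lambda$, whose eigenfunctions are forced to be multiples of $w_b$ on $(\sup\Sigma_1,b)$, together with the observation that for each $x$ some eigenfunction does not vanish at $x$. Without these two endpoint arguments your proof does not close.
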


\begin{proof}
 The (remark after the) proof of Lemma~\ref{lemLDeta} shows that there is an increasing continuous bijection $\eta$ from $\Sigma_1$ onto $\Sigma_2$ such that
 \begin{align*}
  B_1(x_1) = B_2(\eta(x_1)), \quad x_1\in\Sigma_1.
 \end{align*}
 Moreover, the proof of Theorem~\ref{thmLDuniq} (see equation~\eqref{eqnLDphiLT}) shows that 
 \begin{align*}
  \phi_{1,z}(x_1) = \kappa(x_1) \phi_{2,z}(\eta(x_1)), \quad z\in\C,~x_1\in\Sigma_1
 \end{align*}
 for some non-zero real function $\kappa$ on $\Sigma_1$.
 In particular, from Lemma~\ref{lemLDspectrans} we infer for each $x_1\in\Sigma_1$ 
 \begin{align*}
  \|\delta_{x_1}\|_{\Hab}^2 = \| \mathcal{F}_1 \delta_{x_1} \|_{\mu_1}^2 = \|\kappa(x_1) \mathcal{F}_2 \delta_{\eta(x_1)} \|_{\mu_1}^2 = \kappa(x_1)^2 \|\delta_{\eta(x_1)} \|_{\Hab}^2
 \end{align*} 
 and hence $\kappa(x_1)^2=1$ in view of our additional assumptions.
 Moreover, Theorem~\ref{thmLDdBemb} shows that $B_1(x_1)$ and $B_2(\eta(x_1))$ actually have the same norm and hence
 \begin{align*}
  \phi_{1,0}^\qd(x_1) \phi_{1,0}(x_1) = K_1(0,0,x_1) = K_2(0,0,\eta(x_1)) = \phi_{1,0}^\qd(\eta(x_1)) \phi_{1,0}(\eta(x_1)).
 \end{align*}
 Now since the function $\phi_{1,0}^\qd\phi_{1,0}$ is strictly increasing on $(a,b)$ we infer that $\eta(x_1)=x_1$, $x_1\in\Sigma_1$ and in particular $\Sigma_1=\Sigma_2$.
 Hence we even have (note that~\eqref{eqnLDphizero} prohibits $\kappa(x_1)=-1$ for some $x_1\in\Sigma_1$)
 \begin{align}\tag{$*$}\label{eqnLDphiequal}
  \phi_{1,z}(x_1) = \phi_{2,z}(x_1), \quad x_1\in\Sigma_1,~z\in\C.
 \end{align} 
 Moreover, if $(\alpha,\beta)$ is a gap of $\Sigma_1$, i.e.\ $\alpha$, $\beta\in\Sigma_1$ but $(\alpha,\beta)\cap\Sigma_1=\emptyset$, then both of this functions are solutions to the same differential equation which coincide on the boundary of the gap. Since their difference is a solution of $\tau_1 u=0$ which vanishes on the boundary of the gap, we infer that~\eqref{eqnLDphiequal} holds for all $x_1$ in the convex hull of $\Sigma_1$ in view monotonicity of the functions in~\eqref{eqnKerTmaxInc}.
Now if $\underline{x}=\inf\Sigma_1>a$, then $\varsigma+\chi$ is infinite near $a$ and for each $z\in\C$ the solutions $\phi_{1,z}$ and $\phi_{2,z}$ are scalar multiples of $w_a$ on $(a,\underline{x})$. Since they are equal in the point $\underline{x}$ we infer that~\eqref{eqnLDphiequal} also holds for all $x_1$ below $\underline{x}$. Similarly, if $\overline{x}=\sup\Sigma_1<b$, then the spectrum of $S_1$ (and hence also of $S_2$) is purely discrete. Indeed, the solutions $\psi_{1,b,z}$, $z\in\C$ of $(\tau_1-z)u=0$ which are equal to $w_b$ near $b$ are real entire and lie in $S_1$ near $b$. Now for each eigenvalue $\lambda\in\R^\times$ the solutions $\phi_{1,\lambda}$ and $\phi_{2,\lambda}$ are scalar multiples of $w_b$ on $(\overline{x},b)$. As before we infer that~\eqref{eqnLDphiequal} holds for $z=\lambda$ and all $x_1\in(a,b)$.
Finally, from the differential equation we get for each $\alpha$, $\beta\in(a,b)$ with $\alpha<\beta$
\begin{align*}
 \lambda \int_\alpha^\beta \phi_{1,\lambda}\, d\varrho_1 & = -\phi_{1,\lambda}^\qd(\beta) + \phi_{1,\lambda}^\qd(\alpha) + \int_\alpha^\beta \phi_{1,\lambda}\, d\chi  = \lambda \int_\alpha^\beta \phi_{2,\lambda}\, d\varrho_2 \\ & = \lambda \int_\alpha^\beta \phi_{1,\lambda}\, d\varrho_2
\end{align*}
for each $\lambda\in\sigma(S_1)$.
But this shows $\varrho_1=\varrho_2$ and hence also $S_1=S_2$.
Hereby note that for each $x\in(a,b)$ there is an eigenvalue $\lambda\in\R$ such that $\phi_{1,\lambda}(x)\not=0$. Indeed, otherwise we had $f(x)=0$ for each $f\in\D_1$, which is not possible unless $\Sigma_1=\emptyset$.
\end{proof}

Note that the condition that the differential expressions are in the l.p.\ case may be relaxed.
For example it is sufficient to assume that $\tau_j$, $j=1,2$ are in the l.p.\ case at $a$ unless $\inf\Sigma_j=a$ and in the l.p.\ case at $b$ unless $\sup\Sigma_j=b$.
The proof therefore is essentially the same.

\appendix
\section{Hilbert spaces of entire functions}\label{appLDbranges}

In this appendix we will briefly summarize some results of de Branges' theory of Hilbert spaces of entire functions as far as it is needed for the proof of our inverse uniqueness theorem. For a detailed discussion we refer to de Branges' book~\cite{dBbook}.
First of all recall that an analytic function $N$ in the upper open complex half-plane $\C^+$ is said to be of bounded type if it can be written as the quotient of two bounded analytic functions.
For such a function the number
\begin{align*}
  \limsup_{y\rightarrow\infty} \frac{\ln|N(\I y)|}{y} \in[-\infty,\infty)
\end{align*}
is referred to as the mean type of $N$.

A de Branges function is an entire function $E$, which satisfies the estimate
\begin{align*}
 |E(z)| > |E(z^\ast)|, \quad z\in\C^+.
\end{align*}
Associated with such a function is a de Branges space $B$. It consists of all entire functions $F$ such that
\begin{align*}
 \int_\R \frac{|F(\lambda)|^2}{|E(\lambda)|^2} d\lambda < \infty
\end{align*}
and such that $F/E$ and $F^\#/E$ are of bounded type in $\C^+$ with non-positive mean type.
Here $F^\#$ is the entire function given by
\begin{align*}
 F^\#(z) = F(z^\ast)^\ast, \quad z\in\C.
\end{align*}
Equipped with the inner product
\begin{align*}
 \dbspr{F}{G} = \frac{1}{\pi} \int_\R \frac{F(\lambda)G(\lambda)^\ast}{|E(\lambda)|^2} d\lambda, \quad F,\,G\in B,
\end{align*}
 the vector space $B$ turns into a reproducing kernel Hilbert space (see~\cite[Theorem~21]{dBbook}).
 For each $\zeta\in\C$, the point evaluation in $\zeta$ can be written as
 \begin{align*}
  F(\zeta) = \dbspr{F}{K(\zeta,\cdot\,)}, \quad F\in B,
 \end{align*}
 where the reproducing kernel $K$ is given by (see~\cite[Theorem~19]{dBbook})
 \begin{align}\label{eqnLDdBrepker}
  K(\zeta,z) = \frac{E(z)E^\#(\zeta^\ast)-E(\zeta^\ast)E^\#(z)}{2\I (\zeta^\ast-z)}, \quad \zeta,\,z\in\C.
 \end{align}
 Note that though there is a multitude of de Branges functions giving rise to the same de Branges space (including norms), the reproducing kernel $K$ is independent of the actual de Branges function. 
 
One of the main results in de Branges' theory is the subspace ordering theorem;~\cite[Theorem~35]{dBbook}. 
 For our application we need to slightly weaken the assumptions of this theorem. 
 In order to state it let $E_1$, $E_2$ be two de Branges functions with no real zeros and $B_1$, $B_2$ be the associated de Branges spaces.

\begin{theorem}\label{thmLDdBordpitt}
Suppose $B_1$, $B_2$ are homeomorphically embedded in $\Lrmu$ for some Borel measure $\mu$ on $\R$.
If $E_1/E_2$ is of bounded type in the open upper complex half-plane, then $B_1$ contains $B_2$ or $B_2$ contains $B_1$.
\end{theorem}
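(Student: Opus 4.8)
The plan is to deduce the statement from de Branges' original subspace ordering theorem \cite[Theorem~35]{dBbook}, which asserts that two de Branges spaces sitting isometrically inside one common de Branges space are totally ordered by inclusion. Our hypotheses relax this in two ways: the common ambient space is only $\Lrmu$, which is not a space of entire functions, and the embeddings are merely homeomorphisms rather than isometries. Accordingly the conclusion is weakened to plain set inclusion of the underlying spaces of entire functions, and the bounded-type assumption on $E_1/E_2$ is precisely the input that will bridge the gap. First I would record the structural consequences of the hypotheses: since each embedding $B_i\hookrightarrow\Lrmu$ is a homeomorphism onto its image and each $B_i$ is complete, each image is closed in $\Lrmu$, and the intrinsic norm $\|\cdot\|_{B_i}$ is equivalent to $\|\cdot\|_\mu$ on $B_i$; this equivalence then descends to $B_1\cap B_2$ and extends to the algebraic sum $B_1+B_2$.

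The core step is to manufacture a single de Branges space hosting both $B_1$ and $B_2$. I would equip $B_1+B_2$ with the infimal-convolution norm and verify de Branges' axioms, so that \cite[Theorem~23]{dBbook} identifies it as a de Branges space. Continuity of point evaluation and invariance under $F\mapsto F^\#$ are inherited from the two summands, while completeness is furnished by the norm equivalence with $\Lrmu$ established above. The delicate axiom is the isometric stability under the maps $F\mapsto\frac{z-\bar w}{z-w}F$ at zeros $w$ of $F$; this is exactly the point where I expect the bounded type of $E_1/E_2$ to be indispensable, entering through a Nevanlinna factorisation of this quotient and a comparison of its mean type in $\C^+$ against the weights $|E_1|^{-2}$ and $|E_2|^{-2}$ that define the two spaces via \eqref{eqnLDdBrepker}. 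Once $B_1$ and $B_2$ are realised as de Branges subspaces of this common space, \cite[Theorem~35]{dBbook} yields that one contains the other, which is the asserted inclusion; because we only need the ordering of the sets of entire functions, the failure of the embeddings to be isometric does not affect the conclusion.

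The main obstacle I anticipate is the bounded-type comparison itself, namely turning ``$E_1/E_2$ of bounded type'' into the mean-type estimate needed to verify the difference-quotient axiom and, equivalently, to guarantee that the two chains of de Branges spaces do not \emph{cross}. Without such a comparison the nesting can genuinely fail, and indeed the whole purpose of the hypothesis is to forbid precisely this; the companion criterion for de Branges functions of exponential type zero sidesteps it only by invoking Krein's theorem to recover bounded type from a growth bound. I would also treat separately the degenerate cases in which $B_1$ or $B_2$ is trivial, and would use throughout that $E_1$ and $E_2$ have no real zeros so that the quotient $E_1/E_2$ has no real singularities and the weights $|E_j(\lambda)|^{-2}$ are finite and positive on all of $\R$.
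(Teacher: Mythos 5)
Your reduction strategy has two genuine gaps, and both occur exactly at the places you yourself flag as delicate. First, the claim that the equivalence of norms ``extends to the algebraic sum $B_1+B_2$'' is unjustified: the sum of two closed subspaces of $\Lrmu$ need not be closed, and the infimal-convolution norm on $B_1+B_2$ need not be comparable to the $\Lrmu$-norm (this is governed by the angle between the two subspaces, about which nothing is known at this stage). Second, and more seriously, the verification of the division axiom for the infimal-convolution norm is not an incidental technicality --- it is essentially the whole theorem. If $F=F_1+F_2$ vanishes at a non-real $w$, the summands $F_1$, $F_2$ need not vanish there, so the axiom cannot be inherited termwise from $B_1$ and $B_2$; and if one \emph{could} show that $B_1+B_2$ is a de Branges space containing $B_1$ and $B_2$ isometrically as de Branges subspaces, the ordering would already follow, since for de Branges subspaces of a common de Branges space the bounded-type condition is automatic. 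In other words, your ``core step'' is equivalent to (indeed stronger than) the statement being proved, and your deferral to ``a Nevanlinna factorisation of $E_1/E_2$'' names the hypothesis but does not supply an argument. Note also that under the infimal-convolution norm the containments $B_i\subseteq B_1+B_2$ are in general not isometric, which the subspace form of de Branges' ordering theorem would require, so even granting the core step you would face the same renorming issue you set out to avoid.

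The paper's proof bypasses all of this with one observation you missed: there is no need to manufacture a new ambient de Branges space, because $\Lrmu$ itself is the ambient space appearing in \cite[Theorem~35]{dBbook}; one only has to upgrade the homeomorphic embeddings to isometric ones. Re-norm each $B_i$ with the inner product inherited from $\Lrmu$. Completeness and continuity of point evaluations follow from the norm equivalence, and the two isometry axioms in the characterization \cite[Theorem~23]{dBbook} hold \emph{automatically} in the $\Lrmu$-norm, because the operations $F\mapsto F^\#$ and $F\mapsto F(z)(z-w^\ast)/(z-w)$ (for a non-real zero $w$ of $F$) preserve $|F(\lambda)|$ for every $\lambda\in\R$, hence preserve the $\Lrmu$-norm. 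Thus each $B_i$, unchanged as a set of entire functions, becomes a de Branges space isometrically contained in $\Lrmu$. Since $F_1/F_2$ is of bounded type for all $F_1\in B_1$, $F_2\in B_2$ (bounded-type functions form a field), the quotient of the de Branges functions of the renormed spaces is still of bounded type, and \cite[Theorem~35]{dBbook} applies verbatim, yielding the asserted set inclusion.
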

 
\begin{proof}
 If a de Branges space $B$ is homeomorphically embedded in $\Lrmu$, then $B$ equipped with the inner product inherited from $\Lrmu$ is a de Branges space itself. In fact, this is easily verified using the characterization of de Branges spaces in~\cite[Theorem~23]{dBbook}. Hence, without loss of generality we may assume that $B_1$, $B_2$ are isometrically embedded in $\Lrmu$ and thus apply~\cite[Theorem~35]{dBbook}. Therefore, also note that $F_1/F_2$ is of bounded type in the upper complex half-plane for all $F_1\in B_1$, $F_2\in B_2$ and hence so is the quotient of any corresponding de Branges functions. 
\end{proof}
  
 Note that the isometric embedding in~\cite[Theorem~35]{dBbook} is only needed to deduce that the smaller space is actually a de Branges subspace of the larger one. The inclusion part is valid under much more general assumptions; see~\cite[Theorem~5]{pitt0} or~\cite[Theorem~3.5]{pitt}.
   
 Adapting the proof of~\cite[Theorem~35]{dBbook}, one gets a version of de Branges' ordering theorem, where the bounded type condition is replaced by the assumption that the functions $E_1$, $E_2$ are of exponential type zero.
 Actually this has been done in~\cite{kotani86} with the spaces $B_1$, $B_2$ being isometrically embedded in some $\Lrmu$. Again this latter assumption can be weakened.
 
\begin{theorem}\label{thmLDdBordkotani}
Suppose $B_1$, $B_2$ are homeomorphically embedded in $\Lrmu$ for some Borel measure $\mu$ on $\R$.
If $E_1$, $E_2$ are of exponential type zero, then $B_1$ contains $B_2$ or $B_2$ contains $B_1$.
\end{theorem}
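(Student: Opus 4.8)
The plan is to follow the proof of Theorem~\ref{thmLDdBordpitt} almost verbatim, simply replacing the appeal to de Branges' ordering theorem \cite[Theorem~35]{dBbook} by its exponential-type-zero counterpart established by Kotani in \cite{kotani86}. As remarked in the text, \cite{kotani86} already proves the conclusion under the stronger hypothesis that $B_1$ and $B_2$ are \emph{isometrically} embedded in some $\Lrmu$, so the only work is to weaken this to a homeomorphic embedding.

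First I would carry out the reduction to an isometric embedding exactly as in Theorem~\ref{thmLDdBordpitt}: if a de Branges space $B$ is homeomorphically embedded in $\Lrmu$, then the characterization of de Branges spaces in \cite[Theorem~23]{dBbook} shows that $B$, equipped with the inner product inherited from $\Lrmu$, is again a de Branges space. Passing to these inherited inner products changes only the de Branges functions generating $B_1$ and $B_2$, not the underlying sets of entire functions (the norms being equivalent), so without loss of generality $B_1$ and $B_2$ are isometrically embedded in $\Lrmu$, and \cite{kotani86} then yields that one space contains the other.

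The single point that requires more care than in the bounded-type case is that Kotani's theorem still demands the \emph{new} de Branges functions to be of exponential type zero. Here I would argue that exponential type zero is an intrinsic property of the set of entire functions forming the space, independent of the particular generating function or of the choice of an equivalent norm: every element of a de Branges space is entire of exponential type at most that of its generating function, and the reproducing kernels $K(\zeta,\cdot\,)$ of the isometrically embedded space lie in this unchanged set, hence are of exponential type zero and force the associated de Branges function to be of exponential type zero as well. I expect this verification — that replacing $E_1$, $E_2$ by the de Branges functions adapted to the isometric embedding preserves exponential type zero — to be the main (and essentially only) obstacle; once it is in place, the argument is a direct transcription of the proof of Theorem~\ref{thmLDdBordpitt} with \cite{kotani86} in place of \cite[Theorem~35]{dBbook}.
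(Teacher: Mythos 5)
Your proposal is correct and follows the same route as the paper: reduce to the isometric case exactly as in Theorem~\ref{thmLDdBordpitt} (via \cite[Theorem~23]{dBbook}) and then check that the de Branges functions adapted to the inherited inner product are still of exponential type zero, which is precisely the point the paper handles by citing \cite[Theorem~3.4]{kawo} (a de Branges function is of exponential type zero if and only if all functions in its space are). Your direct argument for that last fact — the new reproducing kernels lie in the unchanged set of entire functions, hence have exponential type zero, and the generating function is recoverable from kernels — is a valid substitute for that citation.
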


\begin{proof}
As in the proof of Theorem~\ref{thmLDdBordpitt}, the claim can be reduced to the case where the de Branges spaces are isometrically embedded in $\Lrmu$. Therefore, also note that a de Branges function is of exponential type zero if and only if all functions in the corresponding de Branges space are (see e.g.\ \cite[Theorem~3.4]{kawo}).
\end{proof}

\bigskip
\noindent
{\bf Acknowledgments.}
I thank Harald Woracek for helpful discussions and hints with respect to the literature.

\end{document}